\documentclass[a4paper,11pt,leqno, draft]{article}
\pagestyle{plain}
\usepackage{amssymb, amsmath, amsthm, graphicx}
\usepackage{tikz}
\usepackage{enumitem}

\textwidth=15cm
\textheight=22cm
\oddsidemargin=5mm
\evensidemargin=5mm
\parskip=3pt
\parindent=8mm
\topmargin=-0.5cm
\marginparwidth=1cm

\newtheorem{thm}{Theorem}[section]
\newtheorem{lem}[thm]{Lemma}
\newtheorem{cor}[thm]{Corollary}
\newtheorem{prop}[thm]{Proposition}
\theoremstyle{definition}

\newtheorem{rem}{Remark}[section]

\newtheorem{assum}{Assumption}
\numberwithin{equation}{section}

\renewcommand{\a}{\alpha}
\renewcommand{\b}{\beta}
\newcommand{\e}{\varepsilon}
\newcommand{\de}{\delta}
\newcommand{\fa}{\varphi}

\renewcommand{\k}{\kappa}

\renewcommand{\th}{\theta}
\newcommand{\si}{\sigma}

\newcommand{\om}{\omega}
\newcommand{\De}{\Delta}

\newcommand{\lan}{\langle}
\newcommand{\ran}{\rangle}

\def\R{{\mathbb{R}}}
\def\N{{\mathbb{N}}}
\def\Z{{\mathbb{Z}}}

\allowdisplaybreaks

\newcommand{\trans}{(\zeta^-_{\rho(t,\cdot)})^{-1}}
\newcommand{\tra}{\zeta_t^{-1}}
\newcommand{\trai}{\zeta_t}



\title{Fluctuations in an Evolutional Model of \\
Two-Dimensional Young Diagrams}
\author{Tadahisa Funaki$^{1), *)}$, Makiko Sasada$^{2)}$, Martin Sauer$^{3)}$
and Bin Xie$^{4)}$}
\date{}

\begin{document}
\setlist{itemsep=0.1em, topsep=0em, leftmargin=*, align=left}
\setlist[2]{labelindent=-1em}

\maketitle

\begin{abstract}
\noindent
We discuss the non-equilibrium fluctuation problem, which corresponds
to the hydrodynamic limit established in \cite{FS}, for the dynamics
of two-dimensional Young diagrams associated with the uniform and
restricted uniform statistics, and derive linear stochastic partial
differential equations in the limit.  We show that their invariant
measures are identical to the Gaussian measures which appear in the
fluctuation limits in the static situations.
\footnote{
\hskip -6mm
$^{1)}$ Graduate School of Mathematical Sciences,
The University of Tokyo, Komaba, Tokyo 153-8914, Japan.
e-mail: funaki@ms.u-tokyo.ac.jp \\
$^{2)}$ Department of Mathematics, Keio University,
3-14-1, Hiyoshi, Kohoku-ku, Yokohama 223-8522, Japan.
e-mail: sasada@math.keio.ac.jp \\
$^{3)}$ Technische Universit\"at Darmstadt, Fachbereich Mathematik,
Schlossgartenstrasse 7, D-64289 Darmstadt, Germany.
e-mail: sauer@mathematik.tu-darmstadt.de \\
$^{4)}$ International Young Researchers Empowerment Center, and
Department of Mathematical Sciences, Faculty of Science,
Shinshu University, 3-1-1 Asahi, Matsumoto, Nagano 390-8621, Japan. \\
e-mail:  bxie@shinshu-u.ac.jp}
\footnote{
\hskip -6mm
$^{*)}$ Corresponding author, Fax: +81-3-5465-7011.}
\footnote{
\hskip -6mm
\textit{Keywords: zero-range process, exclusion process, hydrodynamic limit,
fluctuation, Young diagram.}}
\footnote{
\hskip -6mm
\textit{Abbreviated title $($running head$)$: Fluctuations in
2D Young diagrams.}}
\footnote{
\hskip -6mm
\textit{MSC: primary 60K35, secondary 82C22.}}
\footnote{
\hskip -6mm
\textit{The author$^{1)}$ is supported in part by the JSPS Grants $($A$)$
22244007 and 21654021.}}
\footnote{
\hskip -6mm
\textit{The author$^{3)}$ is supported by the DFG and JSPS as a member of
the International Research Training Group Darmstadt-Tokyo IRTG 1529.}}
\footnote{
\hskip -6mm
\textit{The author$^{4)}$ is supported in part by the JSPS Grant for
young scientist $($B$)$  21740067.}}
\end{abstract}
\section{Introduction}

In our companion paper \cite{FS} we investigated the hydrodynamic limit
for dynamics of two-dimensional Young diagrams associated with the
 grandcanonical ensembles determined from two types of statistics called
 uniform (or Bose) and restricted uniform (or Fermi) statistics introduced
 by Vershik \cite{V}. The aim of the present paper is to study the
 corresponding non-equilibrium dynamic fluctuation problem. The theory of
 the equilibrium dynamic fluctuation around the hydrodynamic limit is well
 established based on the so-called Boltzmann-Gibbs principle, see
 \cite{KL}.  However, the results on the non-equilibrium dynamic
 fluctuations are rather limited, cf. \cite{DPS}, \cite{DG} due to a
 special feature of the models and \cite{CY} in a more general setting.
In the present case we are able to derive linear stochastic partial differential equations (SPDEs)
in the limit. Also, the fluctuations can be
studied in the static situations and these results are reinterpreted from
the dynamic point of view by identifying the static fluctuation limits with
the invariant measures of the limit SPDEs. See \cite{P}, \cite{FVY},
 \cite{Y} for static fluctuations under canonical ensembles.

As shown in \cite{FS}, the dynamics of the two-dimensional Young diagrams
can be transformed into equivalent particle systems by considering their
 height differences. In fact, in the uniform statistics (short term U-case),
the evolution of the height difference $\xi_t = (\xi_t(x))_{x\in\N} \in
 (\Z_+)^\N$ of the Young diagrams' height function $\psi_t(u), u \in \R_+$
 defined by $\xi_t(x) = \psi_t(x-1) - \psi_t(x)$ and supplied with the
 condition $\xi_t(0)=\infty$ performs a weakly asymmetric zero-range
 process on $\N$ with a weakly asymmetric stochastic reservoir at $\{0\}$.
 Here we denote $\Z_+=\{0,1,2,\ldots\}$, $\N = \{1,2,\ldots\}$ and  $\R_+=[0, \infty)$. Such
a particle system is further transformed into a weakly asymmetric simple exclusion
 process $\bar{\eta}_t \in\{0,1\}^\Z$ on the whole integer lattice $\Z$
 without any boundary conditions by rotating the $xy$-plane around the
 origin by $45$ degrees counterclockwise and projecting the system to the
 $x$-axis rescaled by $\sqrt{2}$. This  involves quite a
 nonlinearity as observed in Section 4 of \cite{FS}.

On the other hand, in the restricted uniform statistics (short term
RU-case), the height difference $\eta_t = (\eta_t(x))_{x\in\N} \in
 \{0,1\}^\N$ of the Young diagrams' height function supplied with the
 condition $\eta_t(0)=\infty$ performs a weakly asymmetric simple
exclusion process on $\N$ with a weakly asymmetric stochastic reservoir
at $\{0\}$.

The hydrodynamic limit for the weakly asymmetric simple exclusion process
 $\bar{\eta}_t$ on the whole integer lattice was studied by \cite{G} and
 \cite{DPS}, and the corresponding fluctuation limit by \cite{DPS} and
\cite{DG}.  In these works the convergence of the density fluctuation fields was
shown only in the space of processes taking values in generalized
 functions such as $D([0,\infty),\mathcal{S}'(\R))$ or $D([0,T],
\mathcal{H}')$ for a kind of Sobolev space $\mathcal{H}'$ with negative
index. In the U-case it is indeed necessary to transform $\bar{\eta}_t$
back to $\xi_t$ through a nonlinear map, so that these convergence results
are too weak and it is necessary to establish the tightness of
$\bar{\eta}_t$ (under scaling and linear interpolation) in the space
$D([0,T],D(\R))$, i.\,e. a stronger topology. The boundary condition in the
RU-case needs additional analysis. The fluctuations for the simple exclusion
process with boundary conditions in a symmetric case (i.e.\ $\e=1$) were
studied by \cite{LMO}. The weakly asymmetric case was discussed by
\cite{DELO} but without mathematically rigorous proofs. The tightness in the
space $D([0,T],D(\R))$ beyond the time scale of the hydrodynamic limit was
established by \cite{BG} and they derived the KPZ equation in the limit. We
follow their method with an adjustment concerning the boundary condition in
the RU-case.

In Section 2 we first recall the results of \cite{FS} on the hydrodynamic
limits and then formulate our main results on the fluctuations as Theorems
2.1 and 2.2 for the U-case and Theorem 2.3 for the RU-case. The proofs of
these results are given in Sections 3 and 4. Finally, in Section 5 we
discuss the invariant measures of the SPDEs obtained in the limit and their
relations to those obtained in the static situations, see Theorems
\ref{thm:Uinvm}, \ref{thm:Rinvm} and Proposition \ref{pro-CLT-5.1}.

In this paper, given a Banach space $X$ and $I \subseteq \R$, $C(I,X)$
denotes the set of all continuous functions equipped with the locally
uniform convergence, as well as $D(I,X)$ the set of all c\`{a}dl\`{a}g
functions equipped with the Skorohod topology. Abbreviate $C(I,\R) = C(I)$
and $D(I,\R) = D(I)$. Furthermore define for each $r>0$ the weighted $L^2$-
space $L_r^2(I)$ equipped with the norm $|f|_{L_r^2(I)} =\{\int_{I} |f(u)|^2
e^{-2r|u|}du\}^{1/2}$ and set $L_e^2(I):= \cap_{r>0}L_r^2(I)$.
\section{Main Results}\label{section-2}
We first recall the notation used in the paper \cite{FS} and briefly
summarize the results obtained there. For each $n \in \N$, let
$\mathcal{P}_n$ be the set of all sequences $p= (p_i)_{i\in\N}$ satisfying
$p_1 \ge p_2 \ge \cdots \ge p_i \ge \cdots$, $p_i \in \Z_+$ and
$n(p):= \sum_{i\in\N} p_i =n$. Let $\mathcal{Q}_n$ be the set of all
sequences $q= (q_i)_{i\in\N}\in \mathcal{P}_n$ satisfying $q_i > q_{i+1}$
if $q_i >0$.  For $n=0$, we define $\mathcal{P}_0 = \mathcal{Q}_0 =
\{0\}$, where $0$ is a sequence such that $p_i=0$ for all $i\in \N$.  The
unions of $\mathcal{P}_n$ and $\mathcal{Q}_n$ in $n\in\Z_+$ are denoted by
$\mathcal{P}$ and $\mathcal{Q}$, respectively. The height function of the
Young diagram corresponding to $p \in \mathcal{P}$ is defined by
\begin{equation*} \label{eq:2.1}
\psi_p(u)=\sum_{i\in\N} 1_{\{u < p_i\}}, \quad u\in \R_+,
\end{equation*}
and its scaled height function by
\begin{equation*} \label{eq:2.2}
\tilde{\psi}_p^N(u)= \frac1N \psi_p(Nu), \quad u\in\R_+,
\end{equation*}
for $N\ge 1$. Note that $\psi_q$ and $\tilde{\psi}_q^N$ are defined for
$q\in \mathcal{Q}$, since $\mathcal{Q} \subset \mathcal{P}$. For
$0<\e<1$, the dynamics $p_t := p_t^\e=(p_i(t)) _{i\in\N}$ on
$\mathcal{P}$ and $q_t := q_t^\e=(q_i(t))_{i\in\N}$ on $\mathcal{Q}$
are introduced as Markov processes on these spaces having generators
$L_{\e,U}$ and $L_{\e,R}$, respectively, defined as follows. The operator
$L_{\e,U}$ acts on functions $f:\mathcal{P} \to \R$ as
\begin{equation}\label{eq:2.3}
L_{\e,U}f(p) = \sum_{i \in \N} \bigl[ \e 1_{\{p_{i-1} > p_i\}} \{f(p^{i,
+})-f(p)\}+1_{\{p_i > p_{i+1}\}}\{f(p^{i,-})-f(p)\} \bigr],
\end{equation}
while the operator $L_{\e,R}$ acts on functions $f:\mathcal{Q} \to \R$ as
\begin{equation}  \label{eq:2.4}
L_{\e,R}f(q) = \sum_{i \in \N} \bigl[\e 1_{\{q_{i-1} > q_i + 1\}}
\{f(q^{i,+}) - f(q)\}+ 1_{\{q_i > q_{i+1} + 1 \text{ or } q_i = 1\}}
\{f(q^{i,-}) - f(q)\} \bigr],
\end{equation}
where $p^{i,\pm}=(p_j^{i,\pm})_{j\in \N} \in \mathcal{P}$ are defined for
$i\in \N$ and $p\in \mathcal{P}$ by
\[
p^{i,\pm}_j = \begin{cases}
    p_j  & \text{if $j \neq i$}, \\
    p_i \pm 1 & \text{if $j=i$},
\end{cases}
\]
and $q^{i,\pm}\in \mathcal{Q}$ similarly for $q\in \mathcal{Q}$. In
\eqref{eq:2.3} and \eqref{eq:2.4}, take $p_0 = \infty$ and $q_0 =
\infty$. These processes have the grandcanonical ensembles $\mu_U^\e$ on
$\mathcal{P}$ and $\mu_R^\e$ on $\mathcal{Q}$ as their invariant measures,
respectively, where $\mu_U^\e$ and $\mu_R^\e$ are probability measures on
these spaces defined by
\[
\mu^{\e}_U(p)=\frac{1}{Z_U(\e)}\e^{n(p)}, \quad p \in \mathcal{P} \quad
\text{and} \quad \mu^{\e}_R(q)=\frac{1}{Z_R(\e)}\e^{n(q)}, \quad q \in
\mathcal{Q},
\]
and $Z_U(\e)=\prod^{\infty}_{k=1} (1-\e^k)^{-1}$ and $Z_R(\e)=
\prod^{\infty}_{k=1} (1+\e^k)$ are the normalizing constants.

Choose $\e=\e(N) (= \e_U(N), \e_R(N))$ in such a way that in each case the
averaged size of the Young diagrams under $\mu_U^\e$ or $\mu_R^\e$ is equal
to $N^2$, i.\,e.
\[
E_{\mu^{\e(N)}_U}[n(p)]=N^2 \quad \text{and} \quad E_{\mu^{\e(N)}_R}
[n(q)]=N^2.
\]
The asymptotic behavior $\e_U(N) = 1-\a/N+O(\log N/N^2)$ and $\e_R(N) =
1-\b/N+ O(\log N/N^2)$ as $N\to\infty$, with $\a=\pi/\sqrt{6}$ and
$\b=\pi/\sqrt{12}$ was shown in Lemmas 3.1 and 3.2 in \cite{FS}. Define
the corresponding height functions diffusively scaled in space and time by
\[
\tilde{\psi}_U^N(t,u):= \tilde{\psi}_{p^\e_{N^2t}}^N(u) = \frac1N
\psi_{p^\e_{N^2t}}(Nu)
\quad\text{and}\quad
\tilde{\psi}_R^N(t,u):= \tilde{\psi}_{q^\e_{N^2t}}^N(u) = \frac1N
\psi_{q^\e_{N^2t}}(Nu),
\]
with $\e = \e(N)$. The following results are obtained in \cite{FS},
Theorems 2.1 and 2.2. Denote the partial derivative $\partial_u\psi$ by
$\psi'$.
\begin{enumerate}
\item If $\tilde{\psi}_U^N(0,u)$ converges to $\psi_0\in X_U$ (see below)
in probability as $N\to\infty$, then $\tilde{\psi}_U^N(t,u)$ converges to
$\psi_U(t,u)$ in probability.  Here $\psi(t,u) := \psi_U(t,u)$ is a
unique solution of the following nonlinear partial differential equation
(PDE):
\[
\partial_t\psi = \left(\frac{\psi'}{1-\psi'}\right)' + \a \frac{\psi'}
{1-\psi'},  \quad u\in \R_+^\circ,
\]
with the initial condition $\psi(0,\cdot) = \psi_{U,0}(\cdot)$, where $\R_+^\circ =(0, \infty)$.
\item If $\tilde{\psi}_R^N(0,u)$ converges to $\psi_0\in X_R$ (see below)
in probability as $N\to\infty$, then $\tilde{\psi}_R^N(t,u)$ converges to
$\psi_R(t,u)$ in probability. Here $\psi(t,u) := \psi_R(t,u)$ is a
unique solution of the following nonlinea PDE:
\[
\partial_t\psi = \psi'' + \b \, \psi' (1+\psi'), \quad u \in \R_+,
\]
with the initial condition $\psi(0,\cdot) =\psi_{R,0}(\cdot)$.
\end{enumerate}
Consider these PDEs in the function spaces $X_U$ and $X_R$, respectively,
and their solutions are unique in these classes:
\begin{align*}
X_U:=&  \{\psi:\R_+^\circ \to \R_+^\circ; \psi \in C^1,
\psi^{\prime}<0, \lim_{u \downarrow 0}\psi(u)=\infty, \lim_{u \uparrow
\infty}\psi(u)=0\}, \\
X_R:=& \{\psi:\R_+ \to \R_+; \psi \in C^1, -1 \le \psi^{\prime} \le 0,
\psi'(0)=-1/2, \lim_{u \uparrow \infty}\psi(u)=0\}.
\end{align*}
\begin{figure}[ht]
\begin{center}
\begin{minipage}{6.2cm}
\begin{tikzpicture}[baseline=0]
\draw[thick, ->] (-0.5,0) -- (5,0) node[below] {$u$};
\draw[thick, ->] (0,-0.5) -- (0,4) node[above] {$\tilde{\psi}^{1}_U(t,u)$};
\draw[dotted] (0, 0.5) -- (4,0.5);
\draw[dotted] (0, 1) -- (2.5,1);
\draw[dotted] (0, 1.5) -- (1,1.5);
\draw[dotted] (0, 2) -- (1,2);
\draw[color=blue, very thick] (0, 2.5) -- (1,2.5);
\draw[color=blue, very thick] (1, 1.5) -- (2.5,1.5);
\draw[color=blue, very thick] (2.5, 1) -- (4,1);
\draw[color=blue, very thick] (4, 0) -- (4.95,0);
\draw[dotted] (1, 2.5) -- (1,0) node[below] {$p_4, p_5$};
\draw[dotted] (2.5, 1.5) -- (2.5,0) node[below] {$p_3$};
\draw[dotted] (4, 1) -- (4,0) node[below] {$p_1, p_2$};
\draw[color=blue, fill=blue] (0,3.5) circle (0.05) node[right] {$(p_7,7)$};
\draw[color=blue, fill=blue] (0,3) circle (0.05) node[right] {$(p_6,6)$};
\draw[color=blue] (0,2.5) circle (0.05);
\draw[color=blue, fill=blue] (1,2.5) circle (0.05) node[right] {$(p_5,5)$};
\draw[color=blue, fill=blue] (1,2) circle (0.05) node[right] {$(p_4,4)$};
\draw[color=blue] (1,1.5) circle (0.05);
\draw[color=blue, fill=blue] (2.5,1.5) circle (0.05) node[right] {$(p_3,3)$};
\draw[color=blue, ] (2.5,1) circle (0.05);
\draw[color=blue, fill=blue] (4,1) circle (0.05) node[right] {$(p_2,2)$};
\draw[color=blue, fill=blue] (4,0.5) circle (0.05) node[right] {$(p_1,1)$};
\draw[color=blue] (4,0) circle (0.05);
\end{tikzpicture}
\end{minipage}
\begin{minipage}{6.2cm}
\begin{tikzpicture}[baseline=0]
\draw[thick, ->] (-0.5,0) -- (5,0) node[below] {$u$};
\draw[thick, ->] (0,-0.5) -- (0,4) node[above] {$\phantom{\tilde{\psi}}\psi_U(t,u)$};
\draw[color=blue, domain=0.03:2.7, smooth, scale=1.5] plot (\x,{-sqrt(6)/pi* ln(1-exp(-pi*\x/sqrt(6)))});
\end{tikzpicture}
\end{minipage}
\caption{A typical height function and its scaling limit}
\end{center}
\end{figure}
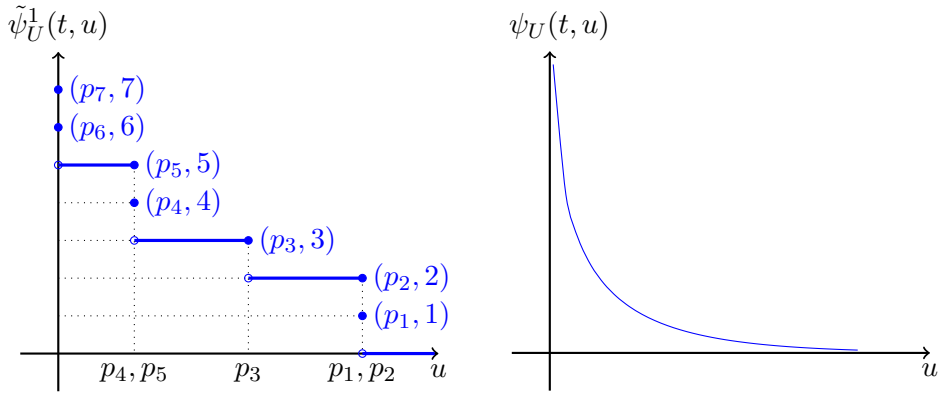

The aim of the present paper is to establish the corresponding fluctuation
limits.  Namely, we consider the fluctuations of $\tilde{\psi}_U^N(t,u)$
and $\tilde{\psi}_R^N(t,u)$ around their limits:
\[
\Psi_U^N(t,u) := \sqrt{N} \bigl(\tilde{\psi}_U^N(t,u) -\psi_U(t,u)\bigr)
\quad \text{and} \quad
\Psi_R^N(t,u) := \sqrt{N}
\bigl(\tilde{\psi}_R^N(t,u) -\psi_R(t,u)\bigr),
\]
which are elements of $D([0, T], D(\R_+^\circ))$ and  $D([0, T], D(\R_+))$,
respectively.

A natural idea in the U-case is to investigate the fluctuation of the curve
$\check{\psi}_U^N(t)$ around $\check{\psi}_U(t)$,
 which are obtained
by rotating the original curves $\tilde{\psi}_U^N(t)$ and $\psi_U(t) =
\{(u,y); y=\psi_U(t,u), u\in \R_+^\circ\}$ located in the first quadrant
of the $uy$-plane by 45 degrees  counterclockwise  around the origin
$O$, respectively, where $\tilde\psi_U^N(t)$ is a continuous indented curve
obtained from the graph $\{(u,y); y=\tilde{\psi}_U^N(t,u), u\in \R_+\}$ of
the original function $\tilde{\psi}_U^N(t,u)$ by filling all jumps by vertical
segments.
In particular, this contains a
part of $y$-axis: $\{(0,y); y \ge \tilde\psi_U^N(t,0)\}$.
\begin{figure}[ht]
\begin{center}
\begin{minipage}{7.3cm}
\begin{tikzpicture}
\draw[thick, ->] (-3.5,0) -- (3.5,0) node[below] {$v$};
\draw[thick, ->] (0,-0.5) -- (0,4) node[above] {$\check{\psi}^{1}_U(t,v)$};
\draw[color=blue, fill=blue] (-3.1815,0) circle (0.05);
\draw[color=blue, fill=blue] (-2.828,0) circle (0.05);
\draw[color=blue, fill=blue] (-2.4745,0) circle (0.05) node[below] {$\frac{\bar{q}_7}{\sqrt{2}}$};
\draw[color=blue, fill=blue] (-2.121,0) circle (0.05) node[below] {$\frac{\bar{q}_6}{\sqrt{2}}$};
\draw[color=blue] (-1.7675,0) circle (0.05);
\draw[color=blue] (-1.414,0) circle (0.05);
\draw[color=blue, fill=blue] (-1.0605,0) circle (0.05) node[below] {$\frac{\bar{q}_5}{\sqrt{2}}$};
\draw[color=blue, fill=blue] (-0.707,0) circle (0.05) node[below] {$\frac{\bar{q}_4}{\sqrt{2}}$};
\draw[color=blue] (-0.3535,0) circle (0.05);
\draw[color=blue] (0,0) circle (0.05);
\draw[color=blue] (0.3535,0) circle (0.05);
\draw[color=blue, fill=blue] (0.707,0) circle (0.05) node[below] {$\frac{\bar{q}_3}{\sqrt{2}}$};
\draw[color=blue] (1.0605,0) circle (0.05);
\draw[color=blue] (1.414,0) circle (0.05);
\draw[color=blue] (1.7675,0) circle (0.05);
\draw[color=blue, fill=blue] (2.121,0) circle (0.05) node[below] {$\frac{\bar{q}_2}{\sqrt{2}}$};
\draw[color=blue, fill=blue] (2.4745,0) circle (0.05) node[below] {$\frac{\bar{q}_1}{\sqrt{2}}$};
\draw[color=blue] (2.828,0) circle (0.05);
\draw[color=blue] (3.1815,0) circle (0.05);
\draw[color=blue, very thick, rotate=45] (0,4.5) -- (0,2.5) -- (1,2.5) -- (1,1.5) -- (2.5,1.5) -- (2.5,1) -- (4,1) -- (4,0) -- (4.5,0);
\draw[dotted] (-1.7675,1.7675) -- (-1.7675,0);
\draw[dotted] (-1.0605,2.4745) -- (-1.0605,0);
\draw[dotted] (-0.3535,1.7675) -- (-0.3535,0);
\draw[dotted] (0.707,2.828) -- (0.707,0);
\draw[dotted] (1.0605,2.4745) -- (1.0605,0);
\draw[dotted] (2.121,3.535) -- (2.121,0);
\draw[dotted] (2.828,2.828) -- (2.828,0);
\draw[dashed] (0,0) -- (3.5,3.5);
\draw[dashed] (0,0) -- (-3.5,3.5);
\end{tikzpicture}
\end{minipage}
\begin{minipage}{7.3cm}
\begin{tikzpicture}
\draw[thick, ->] (-3.5,0) -- (3.5,0) node[below] {$v$};
\draw[thick, ->] (0,-0.5) -- (0,4) node[above] {$\check{\psi}_U(t,v)$};
\draw[dashed] (0,0) -- (3.5,3.5);
\draw[dashed] (0,0) -- (-3.5,3.5);
\draw[color=blue, rotate=45, domain=0.03:2.7, smooth, scale=1.5] plot (\x,{-sqrt(6)/pi* ln(1-exp(-pi*\x/sqrt(6)))});
\draw[color=black] (2.4745,0) circle (0) node[below] {$\phantom{\frac{\bar{q}_1}{\sqrt{2}}}$};
\end{tikzpicture}
\end{minipage}
\end{center}
\caption{Rotating by $45^\circ$ yields functions on $\R$ and a particle
system on $\Z$.}
\end{figure}
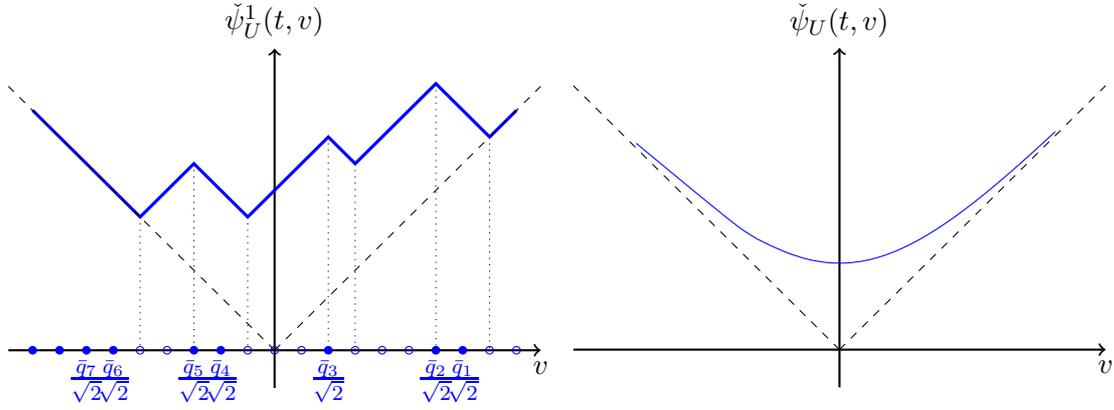

Then, we consider
\[
\check\Psi_U^N(t,v) := \sqrt{N} \bigl(\check\psi_U^N(t,v)
-\check\psi_U(t,v)\bigr),  \quad v\in \R,
\]
which is an element of $D([0, T], C(\R))$.  The fluctuation
$\check\Psi_U^N(t)$ defined as above is a natural object to study, since
the Young diagrams corresponding to the class $\mathcal{P}$ belong to the
same class under the reflection with respect to the line $\{y=u\}$, while
those corresponding to $\mathcal{Q}$ do not have such property in general.

We are now at the position to formulate our main theorems. In the U-case, we
first state the result for $\check\Psi_U^N(t)$ and then apply it to
$\Psi_U^N(t)$.  We assume the following three conditions on the initial
values $\{\check\Psi_U^N(0,v)\}_N$ and $\{\check\psi_U^N(0,0)\}_N$:
\begin{assum}
\begin{enumerate}
\item For every $\k \in \N$, the following holds:
\begin{enumerate}
\item $\sup_{N\in \N}E[\exp\{\k \check\psi_U^N(0,0)\}]<\infty$,
\item $\sup_{N \in \N} \sup_{v\in\R} E[|\check{\Psi}_U^N(0,v)|^{2\k}]
    <\infty$,
\item for  any $v_1, v_2 \in \Z/N$ $\sup_{N\in \N}E[|
\check{\Psi}_U^N(0,v_1) -\check{\Psi}_U^N(0,v_2)|^{2\k}] \leq C|v_1-v_2|^{\k}$ with $C>0$.
\end{enumerate}
\item $\{\check{\Psi}_U^N(0,v)\}_N$ are independent of the
noises determining the process $\{p_t^{\e(N)}; t\ge 0\}$.
\item $\check{\Psi}_U^N(0,v)$ converges weakly to $\check{\Psi}_{U,0}(v)$ in
$C(\R)$, and $E[|\check{\Psi}_{U,0}|_{L_r^2(\R)}^2]<\infty$ for all $r>0$.
\end{enumerate}
\end{assum}
For every initial value $\psi_U(0) \in X_U$, one can easily construct non-random
or random sequences $\{\tilde\psi_U^N(0)\}_N$ or
$\{\check\psi_U^N(0)\}_N$, which satisfy these three conditions.
\begin{thm}(U-case under rotation) \label{thm:2.0}
Under Assumption 1, $\check{\Psi}^N_U(t,v)$ converges weakly to
$\check{\Psi}_U(t,v)$ as $N \to \infty$  on the space $D([0,T], C(\R))$
for every $T>0$. The limit $\check{\Psi}_U(t,v)$ is in $C([0,T], C(\R))$
(a.\,s.) and characterized as a solution of the following SPDE:
\begin{equation}\label{eq:SPDE2.0}
\left\{
\begin{aligned}
\partial_t \check{\Psi}_U(t,v)  =&  \tfrac12 \check{\Psi}_U''(t,v) +
\tfrac{\a}{\sqrt{2}} (1-2\rho(t,\sqrt{2}v))  \check{\Psi}_U'(t,v) \\
& \quad + 2^{\frac34}\sqrt{\rho(t,\sqrt{2}v)(1-\rho(t,\sqrt{2}v))}
\dot{W}(t,v),\\
\check{\Psi}_U(0,v) =& \check{\Psi}_{U,0}(v),
\end{aligned}
\right.
\end{equation}
where $\rho(t, \cdot)$ is the solution of the PDE \eqref{eqn:burgers}
below, or equivalently $\rho(t,\cdot)= \Phi_U(\psi_U(t))(\cdot)$ with the map $\Phi_U:
X_U\to Y_U$ defined in Proposition 4.4 of \cite{FS}  (or given explicitly
in the proof of Lemma \ref{lem:3.2-b} below), and $\dot{W}(t,v)$ is the
space-time white noise on $[0,T]\times\R$.
\end{thm}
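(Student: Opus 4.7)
The plan is to use the $45^\circ$ rotation described in Section~2 to recast the problem as a fluctuation statement for the weakly asymmetric simple exclusion process $\bar{\eta}_t\in\{0,1\}^{\Z}$ that encodes the slopes of the rotated height curve. In these coordinates, and up to the scalar translation $\sqrt{N}(\check\psi_U^N(t,0)-\check\psi_U(t,0))$ whose tightness is controlled by Assumption~1(1)(a), $\check{\Psi}_U^N(t,v)$ is the centred integrated density field of $\bar{\eta}_{N^2 s}$ around the hydrodynamic profile $\rho(t,\cdot)=\Phi_U(\psi_U(t))(\cdot)$. The strategy is to produce, pass to the limit in, and close a semimartingale decomposition for this field.

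The first step is a Dynkin decomposition
\[
\check{\Psi}_U^N(t,v) = \check{\Psi}_U^N(0,v) + \int_0^t A_s^N(v)\,ds + M_t^N(v),
\]
where $A_s^N(v)$ is obtained by applying $N^2 L_{\e(N),U}$ to the rotated discrete height at $v$ and subtracting $\partial_s\check\psi_U(s,v)$. The integrand $A_s^N$ is built from local currents of the form $\bar{\eta}(x)(1-\bar{\eta}(x+1))-\e\bar{\eta}(x+1)(1-\bar{\eta}(x))$. Taylor-expanding in $\bar{\eta}(y)-\rho(s,y/N)$ splits $A_s^N$ into (i) a deterministic piece that reconstructs the hydrodynamic drift and therefore cancels $\partial_s\check\psi_U$; (ii) linear fluctuations that, after a discrete summation by parts against the test function, assemble the transport operator $\tfrac12\partial_v^2+\tfrac{\a}{\sqrt{2}}(1-2\rho(s,\sqrt{2}v))\partial_v$ acting on $\check{\Psi}_U^N$; and (iii) a quadratic remainder to be disposed of by the non-equilibrium Boltzmann-Gibbs principle, in the spirit of \cite{CY} and \cite{BG}, i.e.\ by averaging over mesoscopic blocks and comparing, via entropy and spectral-gap estimates, with product Bernoulli measures of slowly varying density $\rho(s,\cdot)$.

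For the noise, the predictable bracket $\langle M^N(v)\rangle_t$ admits a local-function representation whose limit, after a second application of the replacement lemma, equals $\int_0^t\int 2\sqrt{2}\,\rho(s,\sqrt{2}u)(1-\rho(s,\sqrt{2}u))\chi_v(u)\,du\,ds$, with $\chi_v$ the indicator of the segment between $0$ and $v$. Standard four-moment and covariance criteria then identify the limit of $M^N$ with the Gaussian martingale whose distributional derivative in $v$ is the white-noise term $2^{3/4}\sqrt{\rho(1-\rho)}\,\dot{W}$ in \eqref{eq:SPDE2.0}; the $2^{3/4}$ arises naturally from the $\sqrt{2}$ rescaling of the rotated lattice. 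Uniqueness of the limiting process is immediate because \eqref{eq:SPDE2.0} is linear with bounded continuous coefficients depending only on the deterministic profile $\rho$.

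The main obstacle is tightness in $D([0,T],C(\R))$, which is strictly stronger than the distributional tightness of \cite{DPS} and \cite{DG}. Following \cite{BG}, I would establish Kolmogorov moment bounds
\[
E\bigl[|\check{\Psi}_U^N(t,v_1)-\check{\Psi}_U^N(t,v_2)|^{2\k}\bigr]\le C|v_1-v_2|^{\k},
\qquad
E\bigl[|\check{\Psi}_U^N(t_1,v)-\check{\Psi}_U^N(t_2,v)|^{2\k}\bigr]\le C|t_1-t_2|^{\k/2},
\]
uniformly in $N$ and $v$. At $t=0$ these are Assumption~1(1)(b)(c); propagation in time uses Burkholder applied to $M^N$ together with a Gronwall argument on the linear drift and the Boltzmann-Gibbs control of the quadratic remainder. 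A Garsia-Rodemich-Rumsey argument then upgrades these bounds to pathwise H\"older control on every compact $[0,T]\times[-R,R]$, while the weighted norm $L_r^2(\R)$ in Assumption~1(3) handles the behaviour at infinity. The hard part, as signalled, is making the non-equilibrium Boltzmann-Gibbs estimate sharp enough -- uniformly in the space-time-varying profile $\rho(s,\cdot)$ -- to close the Kolmogorov bound in $(t,v)$ and thereby upgrade tightness from the classical distributional space to the $C(\R)$-valued Skorohod space required in the statement.
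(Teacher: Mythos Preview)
Your outline differs from the paper's route in one essential respect. The paper does not attempt to close a semimartingale decomposition for $\check{\Psi}_U^N$ directly via a non-equilibrium Boltzmann--Gibbs principle. Instead, it first identifies (Lemmas \ref{lem:3.2-a}, \ref{lem:3.2-b}, Corollary \ref{cor:3.2-c}) $\check\Psi_U^N(t,v) = \sqrt{2}\,\bar\Psi_U^N(t,\sqrt{2}v) + O(N^{-1/2})$, where $\bar\Psi_U^N$ is the integrated density fluctuation of the exclusion process $\bar\eta_t$ on $\Z$; this already removes the ``scalar translation'' you introduce and makes Assumptions 1 and 3 equivalent. Theorem \ref{thm:2.0} then follows from Proposition \ref{prop:ConvPsiBar} by a change of variables. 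Proposition \ref{prop:ConvPsiBar} in turn is proved (see the remark following it and Section \ref{section-4}) by the microscopic Hopf--Cole transformation of \cite{BG}: the exponential of the height function satisfies a \emph{linear} discrete SDE (Lemma \ref{lem:4.3}), and the fluctuation $\bar\Phi_t^N$ around the discretized heat solution inherits a linear mild representation (Lemma \ref{lem-barphi}). The Kolmogorov moment bounds for tightness in $D([0,T],C(\R))$ then come from heat-kernel estimates on $p^N$ and Burkholder's inequality (Lemmas \ref{Est-mon}--\ref{Lemma-barphi-1}), with no quadratic remainder to close and no replacement lemma at all.

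This is exactly the obstacle you flag in your last paragraph. A direct semimartingale approach requires a Boltzmann--Gibbs estimate that is sharp enough to control the quadratic remainder \emph{pointwise} in $v$, not merely in $L^2$ against test functions as in \cite{CY} or \cite{DG}; you have not supplied this, and your appeal to \cite{BG} for the Kolmogorov bounds is circular, since \cite{BG} obtains those bounds precisely by the Hopf--Cole linearization, not by controlling a quadratic fluctuation term. So as written your proposal has a genuine gap at the tightness step in the stronger topology, and the paper's method is designed specifically to sidestep it.
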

The solution of \eqref{eq:SPDE2.0} is defined in a weak sense:  We call
$\check{\Psi}_U(t,v)$ a solution of the SPDE \eqref{eq:SPDE2.0} if it is
adapted with respect to the increasing $\si$-fields generated by $\{\dot{W}
(s); s\le t\}$, satisfies $\check{\Psi}_U\in C([0,T],C(\R)) \cap
C([0,T],L_e^2(\R))$ (a.\,s.)  and for every $f \in C_0^{1,2}([0,T] \times
\R)$,
\begin{align*}
\lan \check{\Psi}_U(t),f(t)\ran  &= \lan \check{\Psi}_{U,0},f(0)\ran  +
\int_0^t \lan \check{\Psi}_U(s), \tfrac12 f''(s) - \tfrac{\a}{\sqrt{2}}
\bigl((1-2\rho(s, \sqrt{2}\ \cdot)) f(s)\bigr)'+ \partial_s f(s)\ran
ds\\
&\qquad + \int_0^t \int_{\R} f(s,v) 2^{\frac34}\sqrt{\rho(s,\sqrt{2}v)
(1-\rho(s,\sqrt{2}v))}W(dsdv) \text{ a.\,s.},
\end{align*}
where $\lan\check\Psi,f\ran = \int_\R \check\Psi(v)f(v) dv$. Similar to
the SPDE \eqref{eq:SPDE-2} (with the boundary condition) stated below, one
can show that the solution of \eqref{eq:SPDE2.0} is equivalent to its mild
form and unique in the above class.

Although the directions of the fluctuations are different in
$\check{\Psi}_U^N$ and $\Psi_U^N$, we still are able to deduce the next
theorem from Theorem \ref{thm:2.0}. As pointed out before, the
transformation is nonlinear, so it is important that the convergence in
Theorem \ref{thm:2.0} is shown in a function space $D([0,T],C(\R))$.
\begin{thm}(U-case) \label{thm:2.1}
Under Assumption 1, $\Psi_U^N(t,u)$ converges weakly to $\Psi_U(t,u)$ as
$N\to\infty$ on the space $D([0,T],D(\R_+^\circ))$ for every $T>0$.  The
limit $\Psi_U(t,u)$ is in $C([0,T],C(\R_+^\circ))$ (a.\,s.) and a solution
of the following SPDE:
\begin{equation}\label{eq:SPDE-1}
\left\{
\begin{aligned}
\partial_t\Psi_U(t,u) =& \left(\frac{\Psi_U'(t,u)}
{(1+\rho_U(t,u))^2}\right)' + \a \frac{\Psi_U'(t,u)}{(1+\rho_U(t,u))^2} +
\sqrt{\frac{2\rho_U(t,u)}{1+\rho_U(t,u)}} \dot{W}(t,u),\\
\Psi_U(0,u) =& \Psi_{U,0}(u),
\end{aligned}
\right.
\end{equation}
where $\rho_U(t,u) = - \psi_U'(t,u)$ and $\dot{W}(t,u)$ is the space-time
white noise on $[0,T]\times\R_+^\circ$.
\end{thm}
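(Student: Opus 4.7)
The strategy is to deduce Theorem~\ref{thm:2.1} from Theorem~\ref{thm:2.0} by inverting the $45^\circ$ rotation at the level of fluctuation fields. Since $\tilde\psi_U^N$ and $\check\psi_U^N$ are related by a deterministic (but nonlinear) change of coordinates, their fluctuations around $\psi_U$ and $\check\psi_U$ are linked by the linearization of that map around the hydrodynamic profile $\psi_U$; this linearization is affine and deterministic, so a continuous-mapping argument transports convergence in $D([0,T],C(\R))$ to convergence in $D([0,T],D(\R_+^\circ))$, and the limit SPDE \eqref{eq:SPDE-1} arises by changing variables in the weak formulation of \eqref{eq:SPDE2.0}.

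\textbf{Linearization.} For $t\in[0,T]$ and $u\in\R_+^\circ$, set
\[
v(t,u):=\frac{u-\psi_U(t,u)}{\sqrt 2},\qquad v^N(t,u):=\frac{u-\tilde\psi_U^N(t,u)}{\sqrt 2}.
\]
By construction $\check\psi_U^N(t,v^N)=(u+\tilde\psi_U^N(t,u))/\sqrt 2$ and $\check\psi_U(t,v)=(u+\psi_U(t,u))/\sqrt 2$, so
\[
\sqrt 2\,[\check\psi_U^N(t,v^N)-\check\psi_U(t,v)]=\Psi_U^N(t,u)/\sqrt N,\qquad v^N-v=-\Psi_U^N(t,u)/\sqrt{2N}.
\]
Writing $\check\psi_U^N(v^N)-\check\psi_U(v)=\check\Psi_U^N(t,v^N)/\sqrt N+[\check\psi_U(v^N)-\check\psi_U(v)]$, Taylor-expanding the bracket, and using the identity $1+\check\psi_U'(t,v(t,u))=2/(1+\rho_U(t,u))$ obtained by implicit differentiation of the rotation map (with $\rho_U:=-\psi_U'$), I get the key pointwise identity
\begin{equation*}
\Psi_U^N(t,u) = \frac{1+\rho_U(t,u)}{\sqrt 2}\,\check\Psi_U^N(t,v^N(t,u)) + R_N(t,u),
\end{equation*}
with a remainder $R_N$ of formal order $1/\sqrt N$.

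\textbf{Convergence and SPDE.} By Theorem~\ref{thm:2.0} and Skorohod representation one may assume $\check\Psi_U^N\to\check\Psi_U$ a.\,s.\ in $D([0,T],C(\R))$; from \cite{FS} $\tilde\psi_U^N\to\psi_U$ uniformly on compacta, so $v^N\to v$. Plugging into the displayed relation and using continuity of $\rho_U$ and $v$ yields the weak convergence of $\Psi_U^N$ in $D([0,T],D(\R_+^\circ))$ to
\[
\Psi_U(t,u):=\frac{1+\rho_U(t,u)}{\sqrt 2}\,\check\Psi_U(t,v(t,u)),
\]
which lies a.\,s.\ in $C([0,T],C(\R_+^\circ))$. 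The remainder $R_N$ is controlled by the moment bounds of Assumption~1, the uniform Lipschitz bound $|\check\psi_U'|\le 1$, and the tightness estimates produced in the proof of Theorem~\ref{thm:2.0}. To identify \eqref{eq:SPDE-1}, test $\Psi_U(t)$ against $F\in C_0^{1,2}([0,T]\times\R_+^\circ)$ and change variables $v=v(t,u)$ with Jacobian $\partial_u v=(1+\rho_U)/\sqrt 2$; since this Jacobian exactly cancels the prefactor $(1+\rho_U)/\sqrt 2$, one gets $\lan\Psi_U(t),F(t)\ran=\lan\check\Psi_U(t),\tilde F(t)\ran$ with $\tilde F(t,v):=F(t,u(t,v))$. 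Inserting into the weak form of \eqref{eq:SPDE2.0} and computing $\partial_t\tilde F$ and $\tilde F'$ via the chain rule (using $\partial_t\psi_U$ from the hydrodynamic PDE and $\psi_U'=-\rho_U$) produces the drift of \eqref{eq:SPDE-1}, while the noise transforms as
\[
\int_0^t\!\!\int_\R \tilde F(s,v)\,2^{3/4}\sqrt{\rho(1-\rho)}\,W(ds\,dv) = \int_0^t\!\!\int_{\R_+^\circ} F(s,u)\sqrt{\tfrac{2\rho_U(s,u)}{1+\rho_U(s,u)}}\,\tilde W(ds\,du),
\]
after using $\rho(t,\sqrt 2\,v(t,u))=\rho_U/(1+\rho_U)$ (the exclusion/zero-range density relation implicit in $\Phi_U$) and the extra $\sqrt{\partial_u v}$ factor from the change of variables.

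\textbf{Main obstacle.} The delicate point is making the linearization error $R_N$ vanish uniformly on compact subsets of $\R_+^\circ$ in the Skorohod topology on $D([0,T],D(\R_+^\circ))$. This crucially relies on the \emph{strong} $C(\R)$-convergence of $\check\Psi_U^N$ furnished by Theorem~\ref{thm:2.0} (distributional convergence as in \cite{DPS,DG} would be too weak), together with a Hölder-in-space increment bound for $\check\Psi_U^N$ propagated in time from Assumption~1(1)(iii), needed to estimate $\check\Psi_U^N(t,v^N)-\check\Psi_U^N(t,v)$. The degeneration near $u=0$, where $\rho_U(t,u)\to\infty$, additionally requires localization.
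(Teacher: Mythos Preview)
Your proposal is correct and follows essentially the same strategy as the paper. Both arguments linearize the coordinate change between the original height function and the transformed one around the hydrodynamic profile, use the crucial strong (function-space) convergence of the transformed fluctuation field to pass to the limit through that linearization, and then identify the SPDE by a change of variables in the weak formulation; your final formula $\Psi_U(t,u)=\tfrac{1+\rho_U(t,u)}{\sqrt2}\,\check\Psi_U(t,v(t,u))$ is exactly the paper's formula $\Psi_U(t,u)=\bar\Psi_U(t,\zeta_t^{-1}(u))/(1-\rho(t,\zeta_t^{-1}(u)))$ rewritten via Corollary~\ref{cor:3.2-c} and Lemma~\ref{lem:Rho}.

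The only noteworthy difference is the route taken through the linearization step. The paper works in the exclusion-process coordinates, starting from the integral identities \eqref{eqn:cor1}--\eqref{eqn:cor2} for $\langle\Psi_U^N(t),f\rangle$ in terms of $\bar\eta_t^N$ and Taylor-expanding the antiderivative $F$ of the test function (Proposition~\ref{prop:formulaPsi}); you instead work pointwise in the rotated coordinates, using the exact relation between $(u,\tilde\psi_U^N)$ and $(v^N,\check\psi_U^N)$ and Taylor-expanding $\check\psi_U$. Your route is somewhat more geometric and yields the $D([0,T],D(\R_+^\circ))$ convergence directly by continuous mapping, whereas the paper's integral computation makes the particle-system structure (and the passage from $\bar\Psi_U^N$ to $\Psi_U$) more explicit. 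Either way, the heart of the matter---and you correctly flag this---is that the argument needs the $C(\R)$-valued convergence of Theorem~\ref{thm:2.0} (equivalently Proposition~\ref{prop:ConvPsiBar}) rather than merely distributional convergence, since the linearization composes the fluctuation field with the random argument $v^N(t,u)$.
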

Let $\tilde{L}_r^2(\R_+^\circ), r>0$ be the weighted $L^2$-space of
functions on $\R_+^\circ$ equipped with the following norm: Take a positive
function $g_r \in C^\infty(\R_+^\circ)$ such that $g_r(u) = u^{1+2r/\a}$
for $u\in (0,1]$ and $g_r(u)=e^{-2ru}$ for $u\in [2,\infty)$, and define $|
\Psi|_{\tilde{L}_r^2(\R_+^\circ)} = \{\int_{\R_+^\circ}|\Psi(u)|^2
g_r(u)du\}^{1/2}$. Again, we set $\tilde{L}_e^2(\R_+^\circ) = \cap_{r>0}
\tilde{L}_r^2(\R_+^\circ)$. The reason to introduce these spaces is
explained in Remark \ref{rem:3.1} below.

The solution of the SPDE \eqref{eq:SPDE-1} is defined in a weak sense: We
call $\Psi_U(t,u)$ a solution of the SPDE \eqref{eq:SPDE-1} if it is
adapted, satisfies $\Psi_U\in C([0,T],C(\R_+^\circ)) \cap C([0,T],
\tilde{L}_e^2(\R_+^\circ)$ (a.\,s.) and for every $f \in C_0^{1,2}
([0,T]\times \R_+^\circ)$,
\begin{equation}\label{eq:3.13}
\begin{split}
\lan \Psi_U(t),f(t)\ran & = \lan \Psi_{U,0},f(0)\ran + \int_0^t \lan
\Psi_U(s),\Biggl(\frac{f'(s)-\a \, f(s)}{(1+\rho_U(s))^2}\Biggr)'
+\partial_s f(s)\ran ds\\
& \quad + \int_0^t \int_{\R_+^\circ} f(s,u) \sqrt{\frac{2\rho_U(s,u)}
{1+\rho_U(s,u)}} W(dsdu) \text{ a.\,s.},
\end{split}
\end{equation}
where $\lan\Psi_U,f\ran = \int_{\R_+^\circ}\Psi_U(u)f(u)du$. The solution
of the SPDE \eqref{eq:SPDE-1} is unique under condition \eqref{eq:LB}, see
Lemma \ref{lem:3.4} and Proposition \ref{porp:example} below.
\begin{rem}\label{rem:2.1}
The boundary condition for the SPDE \eqref{eq:SPDE-1} is unnecessary. Here
this is seen at least under the equilibrium situation: $\rho_U(t,u)
=\rho_U(u)$.  Consider the corresponding diffusion on $\R_+^\circ$ to the
linear differential operator appearing in \eqref{eq:SPDE-1} given by
\[
dX_t = b(X_t) dt + \si (X_t) d B_t
\]
with
\[
b(x) = \frac{\a}{(1+\rho_U(x))^2} - 2 \frac{\rho_U'(x)}{(1+\rho_U(x))^3},
\quad\si (x) = \frac{1}{1+\rho_U(x)},
\]
and $B_t$ a $1$-dimensional Brownian motion. Then we can show that the
corresponding scale function defined on $\R_+^\circ$ diverges to $-\infty$
as $u\downarrow 0$. This means that $0$ is a natural boundary for $X_t$,
see, e.g., Proposition 5.22 in \cite{KS}. Accordingly, we do not need any
boundary condition at $u=0$.
\end{rem}
For the RU-case, we assume the following three conditions on the initial
values $\{\Psi_R^N(0,u)\}_N$ and $\{\tilde{\psi}_R^N(0,0) \}_N$:
\begin{assum}
\begin{enumerate}
\item For any $\k \in \N$, the following holds:
\begin{enumerate}
\item $\sup_{N\in \N}E[\exp\{\k \tilde{\psi}_R^N(0,0) \}]<\infty$,
\item $\sup_{N \in  \N}\sup_{u\in\R_+} E[|\Psi_R^N(0,u)|^{2\k}]<\infty$,
\item  for any $u_1, u_2 \in \N/N$ $\sup_{N \in \N} E[|\Psi_R^N(0,u_1)
-\Psi_R^N(0,u_2)|^{2\k}]\leq C|u_1-u_2|^{\k}$ \\ with $C>0$.
\end{enumerate}
\item $\{\Psi_R^N(0,u)\}_N$ are independent of the noises determining the
process $\{q_t^{\e(N)}; t\ge 0\}$.
\item $\Psi_R^N(0,u)$ converges weakly to $\Psi_{R,0}(u)$ in
$D(\mathbb{R}_+)$. Moreover, we assume that $\Psi_{R,0} \in
C(\mathbb{R}_+)$ (a.\,s.) and $E[|\Psi_{R,0}|_{L_r^2(\R_+)}^2]< \infty$
for all $r>0$.
\end{enumerate}
\end{assum}
\begin{rem}
The scaled height $\tilde{\psi}_R^N(0,0)$ at $t=0$ and $u=0$ appearing in
Assumption 2-(1)(i) is equal to the initial particle number of the weakly
asymmetric simple exclusion process $\eta_t$ divided by $N$, see Section
\ref{section-4}.
\end{rem}
\begin{thm}(RU-case) \label{thm:2.2}
Under Assumption 2, $\Psi_R^N(t,u)$ converges weakly to $\Psi_R(t,u)$ as
$N\to\infty$ on the space $D([0,T],D(\R_+))$ for every $T>0$.  The limit
$\Psi_R(t,u)$ is in $C([0,T],C(\R_+))$ (a.\,s.) and characterized as a
solution of the following SPDE:
\begin{equation}\label{eq:SPDE-2}
\left\{
\begin{aligned}
\partial_t\Psi_R(t,u) & = \Psi_R''(t,u) + \b (1-2\rho_R(t,u))
\Psi_R'(t,u)\\
& \qquad + \sqrt{2\rho_R(t,u)(1-\rho_R(t,u))} \dot{W}(t,u), \\
\Psi_R'(t,0) & = 0, \\
\Psi_R(0, u) & =\Psi_{R,0}(u),
\end{aligned}
\right.
\end{equation}
where $\rho_R(t,u) = - \psi_R'(t,u)$ and  $\dot{W}(t,u)$ is the space-time
white noise on $[0,T]\times\R_+$.
\end{thm}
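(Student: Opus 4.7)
The plan is to mimic the martingale-decomposition strategy of Bertini--Giacomin \cite{BG}, adapted to the semi-infinite geometry with the reservoir at the origin. First, I would represent $\tilde\psi_R^N(t,u)$ in terms of the underlying exclusion field $\eta_t$ via $\psi_{q_t}(x) = \sum_{y>x}\eta_t(y)$, and then apply Dynkin's formula to $\lan \tilde\psi_R^N(t,\cdot),f\ran$ for a test function $f \in C_0^{1,2}([0,T]\times\R_+)$ satisfying $f'(t,0)=0$. Because the generator $L_{\e,R}$ is essentially a symmetric exclusion generator perturbed by the weak asymmetry $\e = 1-\b/N + O(\log N/N^2)$, and because a discrete summation by parts against $f$ with Neumann boundary produces the second-derivative term $\lan \tilde\psi_R^N, f''\ran$ plus boundary contributions that vanish under $f'(\cdot,0)=0$, one obtains a decomposition
\[
\lan \tilde\psi_R^N(t),f(t)\ran = \lan \tilde\psi_R^N(0),f(0)\ran + \int_0^t \lan \tilde\psi_R^N(s), f''(s) + \partial_s f(s)\ran\, ds + D_N(t,f) + M_N(t,f),
\]
where $D_N$ is the weak-asymmetry drift encoding the nonlinear current $\eta(x)(1-\eta(x+1))$ and $M_N$ is a martingale whose predictable quadratic variation is computed explicitly from the jump rates.

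Next, to extract the fluctuation equation I would subtract the corresponding weak form of the hydrodynamic PDE $\partial_t\psi_R = \psi_R'' + \b\psi_R'(1+\psi_R')$ and multiply by $\sqrt{N}$. The linear part yields $\lan \Psi_R^N(s), f''+\partial_s f\ran$ directly. The crucial step is linearizing the nonlinear current: writing $\eta(x)(1-\eta(x+1)) = -\psi_R'(1+\psi_R') + (\text{linear perturbation}) + (\text{quadratic remainder})$, the first-order term in $\Psi_R^N$ produces the drift $\b(1-2\rho_R)\Psi_R'$ after summation by parts, while the quadratic remainder must be shown to be negligible. This is the Boltzmann--Gibbs replacement in the non-equilibrium setting: one replaces a local nonlinear observable by its conditional expectation under the local-equilibrium Bernoulli product measure with density $\rho_R(t,u)$. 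I would carry this out following the scheme of \cite{CY} combined with the a-priori entropy/log-Sobolev estimates available for the exclusion process with reservoir, using the spatial $C^1$-regularity of $\rho_R=-\psi_R'$ on $\R_+$ obtained in \cite{FS}.

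For the noise, the quadratic variation of $\sqrt{N}\, M_N(t,f)$ computes to
\[
\int_0^t \frac{1}{N}\sum_{x\ge 1} (\nabla_N f)(s,x/N)^2 \bigl(\eta_s(x)+\eta_s(x+1)-2\eta_s(x)\eta_s(x+1)\bigr)\, ds,
\]
which, again by the Boltzmann--Gibbs replacement, converges to $\int_0^t\int_{\R_+} (f'(s,u))^2 \cdot 2\rho_R(s,u)(1-\rho_R(s,u))\,du\,ds$; this identifies $\sqrt{2\rho_R(1-\rho_R)}$ as the noise intensity after an integration by parts that uses the Neumann condition at the boundary.

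Finally, I would prove tightness of $\{\Psi_R^N\}$ in $D([0,T],D(\R_+))$ (not merely in a distributional space). Assumption 2 provides uniform moment and modulus-of-continuity bounds at $t=0$; these propagate in time via the Kolmogorov criterion applied to $\lan \Psi_R^N(t),f\ran$ for a countable dense family of $f$, combined with Aldous's criterion for temporal regularity using the explicit form of $D_N$ and the quadratic variation of $M_N$. Combined with the a-priori tightness of the height field itself (from the integral representation and control of $\tilde\psi_R^N(t,0)$ via Assumption 2-(1)(i), as in \cite{FS}), one upgrades distributional tightness to tightness in $D([0,T],D(\R_+))$. Any limit point is then shown to satisfy the weak form of \eqref{eq:SPDE-2}, and uniqueness of the limit is obtained from the fact that \eqref{eq:SPDE-2} is a linear SPDE with bounded coefficients and a Neumann-type heat semigroup (on $\R_+$), so that its mild formulation admits a unique adapted solution in $C([0,T],C(\R_+))\cap C([0,T],L_e^2(\R_+))$. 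I expect the main obstacle to be the non-equilibrium Boltzmann--Gibbs replacement up to the boundary $u=0$, since the reservoir breaks translation invariance and the standard equilibrium arguments must be localized and combined with control of the height $\tilde\psi_R^N(t,0)$ uniformly in $N$ and $t\in[0,T]$.
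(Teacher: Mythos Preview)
Your route is genuinely different from the paper's, and it has a real gap.

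\textbf{Different route.} The paper does not work directly with $\Psi_R^N$ and does not use any Boltzmann--Gibbs replacement. Instead it applies the \emph{microscopic Hopf--Cole transformation}: it sets $\zeta_t^N(x)=\exp\{-(\log\e)\sum_{y\ge x}\eta_t^N(y)\}$, which satisfies a \emph{linear} discrete SDE (Lemma~5.6 of \cite{FS}), then removes the mixed boundary condition by the substitution $\bar\zeta_t^N(x)=e^{-(\log\e)x/2}\zeta_t^N(x)$ extended evenly to $\Z$ (Lemma~\ref{lem:4.3}). The fluctuation field $\bar\Phi^N_t=\sqrt N(\bar\zeta^N_t-\bar\om^N_t)$ then solves a linear discrete SDE with explicit mild representation \eqref{barphi-2}, and all moment bounds for Kolmogorov/Aldous--Kurtz tightness in $D([0,T],C(\R))$ follow from heat-kernel estimates (Lemmas~\ref{Ker-est}--\ref{Lemma-barphi-1}) plus the single a-priori bound $\sup_N E[\sup_{s\le T}\zeta_s^N(1)^{2\k}]<\infty$ (Lemma~\ref{Lem4.8}). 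Only at the very end is the relation $\Psi_R=\Phi/(\b\,\om)$ used to transfer the SPDE back. This is exactly the Bertini--Giacomin mechanism you cite; its whole point is to \emph{avoid} the nonlinear current and hence any non-equilibrium Boltzmann--Gibbs step. What your approach would buy, if it worked, is a proof that does not rely on the special integrable structure (Hopf--Cole); what the paper's approach buys is that the drift is already linear, so no replacement lemma is needed and the moment estimates for function-space tightness are explicit.

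\textbf{The gap.} Your tightness argument does not reach $D([0,T],D(\R_+))$. Controlling $\lan\Psi_R^N(t),f\ran$ for countably many $f$ and invoking Aldous gives tightness only in a distribution space (as in \cite{DG}); the paper explicitly says this is insufficient here. To get tightness in $D(\R_+)$ you need uniform-in-$N$ moment bounds of the form $E[|\Psi_R^N(t,u_1)-\Psi_R^N(t,u_2)|^{2\k}]\le C|u_1-u_2|^{1+\a}$, and your sketch gives no mechanism for propagating such pointwise H\"older bounds in time without the linear mild representation that Hopf--Cole provides. The ``upgrade'' sentence is precisely where the argument is missing. Relatedly, the non-equilibrium Boltzmann--Gibbs step you flag is not available off the shelf for exclusion with a reservoir; \cite{CY} treats Ginzburg--Landau, and \cite{DELO} for weakly asymmetric exclusion with open boundaries is non-rigorous. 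If you want to pursue the direct route, you would need both a proof of the replacement lemma up to the boundary and an independent source of spatial regularity for $\Psi_R^N$; the paper sidesteps both by linearizing first.
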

Again, we say $\Psi_R(t,u)$ is a solution of the SPDE \eqref{eq:SPDE-2} if
it is adapted, satisfies $\Psi_R\in C([0,T],C(\R_+)) \cap
C([0,T],L_e^2(\R_+))$ (a.\,s.) and for every $f \in C^{1,2}_0 ([0,T] \times
\R_+)$ satisfying $f'(t,0)=0$ the following holds:
\begin{equation}\label{eq:4.5}
\begin{split}
\lan\Psi_R(t),f(t)\ran =& \lan\Psi_{R,0},f(0)\ran + \int_0^t
\lan\Psi_R(s),f''(s)-\b ((1-2\rho_R(s))f(s))' + \partial_s f\ran ds\\
& + \int_0^t \int_{\R_+} f(s,u) \sqrt{2\rho_R(s,u)(1-\rho_R(s,u))}
W(dsdu) \text{ a.\,s.}
\end{split}
\end{equation}
Similarly as in \cite{GN}, one can show that the solution of
\eqref{eq:SPDE-2} is equivalent to its mild form, that is, $\Psi_R(t,u)$ is
an $L_e^2(\mathbb{R}_+)$-valued adapted process and the following holds:
\begin{align*}
\Psi_R(t,u) = &\int_{\mathbb{R}_+} \mathfrak{p}(t,u, v)\Psi_{R,0}(v)dv +
\int_0^t\int_{\R_+}  2\b \mathfrak{p}(t-s,u, v)\rho_R'(s,v) \Psi_R(s,
v)dvds\\
& - \int_0^t\int_{\R_+}   \frac{\partial}{\partial v} \mathfrak{p}(t-
s,u, v)\b (1-2\rho_R(s,v)) \Psi_R(s,v)dvds \\
&  + \int_0^t \int_{\R_+} \mathfrak{p}(t-s,u, v) \sqrt{2\rho_R(s,v)
(1-\rho_R(s,v))} W(dsdv) \text{ a.\,s.},
\end{align*}
where $\mathfrak{p}(t,u, v)$ is the fundamental solution to $\partial_t
\Psi(t,u) = \Psi''(t,u)$ with the homogeneous Neumann boundary condition at
$0$, that is
$
\mathfrak{p}(t,u, v)=\frac{1}{\sqrt{4\pi t}} \{e^{-\frac{(u-v)^2}
{4t}}+e^{-\frac{(u+v)^2}{4t}} \}, u, v \in \R_+.
$
The properties of $\rho_R(t,u)$ and basic estimates for $\mathfrak{p}(t,u,
v)$ imply the existence and uniqueness of the solution to \eqref{eq:SPDE-2}.
On the other hand, one can also show the continuity of the trajectory of
$\Psi_R(t, \cdot)$ as an $L_e^2(\mathbb{R}_+)$-valued process and the joint
continuity in $t$ and $u$. Since the arguments are standard, the details are
omitted.
\section{Proof of Theorems \ref{thm:2.0} and \ref{thm:2.1}}
\label{sec:uniformproof}
\subsection{Proof of Theorem \ref{thm:2.0}}
As already pointed out in Section 1 (or see Section 4 of \cite{FS} for more
details), the height difference $\xi_t =(\xi_t(x))_{x\in\N} \in (\Z_+)^\N$
of $\psi_{p_t}$ can be transformed into a weakly asymmetric simple exclusion
process $\bar{\eta}_t =(\bar{\eta}_t(x))_{x\in\Z} \in \{0,1\}^\Z$ on a
whole integer lattice $\Z$. For further use, we introduce  two functions
$\zeta^-_{\bar{\eta}_t}$ and $\zeta^+_{\bar{\eta}_t}$ on $\Z$ by
\[
\zeta^-_{\bar{\eta}_t}(x) := \sum_{z \leq x} (1 -\bar{\eta}_t(z))
\quad \text{and}\quad
 \zeta^+_{\bar{\eta}_t}(x) := \sum_{z \geq x+1} \bar{\eta}_t(z),
\]
which are the main parts of the transformation. The scaled empirical
measures of the  time accelerated process
$\bar{\eta}^N_t:=\bar{\eta}_{N^2t}$ of $\bar{\eta}_t$ given by
\[
\pi_t^N(dv) := \frac1N \sum_{x \in \Z} \bar{\eta}_t^N(x) \de _{\frac{x}
{N}}(dv), \quad v\in \R,
\]
converge, as $N \to \infty$, to the unique classical solution $\rho(t,v)$
of
\begin{equation} \label{eqn:burgers}
\left\{
\begin{aligned}
\partial_t \rho(t,v) &= \rho''(t,v) + \a  (\rho(t,v) (1-\rho(t,v)))',
\quad  t>0, v\in\R,\\
\rho(0,v) &= \rho_0(v).
\end{aligned}
\right.
\end{equation}
See Proposition 4.2 of \cite{FS} for the precise statement and
distinguish $\rho(t,v)$ from $\rho_U(t,u)$ in Theorem \ref{thm:2.1},
though we use similar notation. Furthermore, the continuous version
of the inverse transformation above leads to $\psi_U(t,u) =
\zeta^+_{\rho(t,\cdot)} \bigl(\trans (u)\bigr)$, with
\[
\zeta^-_{\rho(t,\cdot)}(v) := \int_{-\infty}^v (1 -\rho(t,w))dw
\quad \text{and}\quad
\zeta^+_{\rho(t,\cdot)}(v) := \int^\infty_v \rho(t,w)dw.
\]
This is indeed defined via the inverse map of $\Phi_U$, see Proposition 4.4
of \cite{FS}. To shorten notation, we will use $\trai (v) =
\zeta^-_{\rho(t, \cdot)}(v)$ and $\trai^N (x) = \zeta^-_{\bar{\eta}_t^N}
(x)$. The fluctuations for $\bar{\eta}_t^N$ around $\rho(t,\cdot)$
given by
\[
\bar{\xi}^N_U(t,dv) := \sqrt{N} \Bigl( \pi_t^N(dv) - \rho(t,v)dv \Bigr),
\]
are considered as  distribution-valued processes  in \cite{DG}. Due to
the fact that we want to deal with the height function $\tilde{\psi}_U^N(t,
v)$, we will look at an integrated version of $ \bar{\xi}^N_U(t,dv)$,
namely
\[
\bar{\Psi}^N_U(t,v) := \sqrt{N} \Bigl( \pi_t^N\bigl([v,\infty)\bigr)
- \int_v^\infty \rho(t,w)dw \Bigr).
\]
The asymptotic properties of $\rho(t,\cdot)$ and of the tails of
$\pi^N_t$ guarantee that the integrals are finite for all $v \in
\R$, therefore $\bar{\Psi}^N_U(t, v)$ is well-defined. There is an
immediate result on the fluctuations following from Theorem
\ref{thm:2.2} for the process with a stochastic reservoir at
$\{0\}$.

\begin{assum}
\begin{enumerate}
\item For every $\k \in \N$, the following holds:
\begin{enumerate}
\item $\sup_{N\in \N}E[\exp\{\k  \pi_0^N([0, \infty) \}]<\infty$,
\item $\sup_{N \in \N} \sup_{v\in\R} E[|\bar{\Psi}_U^N(0,v)|^{2\k}] <
\infty$,
\item for any $v_1, v_2 \in \Z/N$ $\sup_{N\in \N} E[|\bar{\Psi}_U^N
(0,v_1) -\bar{\Psi}_U^N(0,v_2)|^{2\k}] \leq C|v_1-v_2|^{\k}$ with $C>0$.
\end{enumerate}
\item $\{\bar{\Psi}_U^N(0,v)\}_N$ are independent of the noises
determining the process $\{\xi_t; t\ge 0\}$.
\item $\bar{\Psi}_U^N(0,v)$ converges weakly to $\bar{\Psi}_0(v)$ in
$D(\R)$, and $\bar{\Psi}_0 \in C(\mathbb{R})$ (a.\,s.) such that  for all
$r>0$ $E[|\bar{\Psi}_0|_{L_r^2(\R)}^2]<\infty$.
\end{enumerate}
\end{assum}
\begin{prop}\label{prop:ConvPsiBar}
Under Assumption 3, as  $N \to \infty$, $\bar{\Psi}^N_U(t,v)$ converges weakly on the space
$D([0,T], D(\R))$ to $\bar{\Psi}_U(t,v)$. The limit is characterized as the
unique (weak) solution of the following SPDE:
\begin{equation} \label{eq:SPDE-Prop}
\partial_t \bar{\Psi}_U(t,v)  = \bar{\Psi}_U''(t,v) + \a (1-2\rho(t,v))
 \bar{\Psi}_U'(t,v)
+ \sqrt{2\rho(t,v)(1-\rho(t,v))} \dot{W}(t,v),
\end{equation}
i.\,e. $\bar{\Psi}_U\in C([0,T],C(\R)) \cap C([0,T],L_e^2(\R))$ (a.\,s.)
and for every $f \in C_0^{1,2}([0,T] \times \R)$,
\begin{equation}\label{eqn:spdePsiBar}
\begin{split}
\lan \bar{\Psi}_U(t),f(t)\ran =& \lan
\bar{\Psi}_{U,0},f(0)\ran + \int_0^t \lan \bar{\Psi}_U(s),
f''(s) - \a \bigl((1-2\rho(s)) f(s)\bigr)'
+ \partial_s f(s)\ran ds\\
& + \int_0^t \int_{\R} f(s,v)
\sqrt{2\rho(s,v)(1-\rho(s,v))}W(dsdv) \text{ a.\,s.},
\end{split}
\end{equation}
with $\rho(t)$ being the solution to \eqref{eqn:burgers}, $\dot{W}(t,v)$
being the space-time white noise on $[0,T]\times\R$.
\end{prop}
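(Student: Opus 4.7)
The plan is to follow the classical martingale approach to WASEP fluctuations, adapted so as to produce convergence in the stronger path space $D([0,T], D(\R))$ as in Bertini--Giacomin \cite{BG}. First, I would apply Dynkin's formula to the test-function-paired process $\lan \bar{\Psi}^N_U(t), f(t)\ran$ for $f\in C_0^{1,2}([0,T]\times\R)$. Because $\bar{\Psi}^N_U(t,v)$ is linear in the configuration $\bar{\eta}^N_t$, the $N^2$-accelerated WASEP generator produces a discrete Laplacian plus a current term of order $\a/N$; after subtracting the deterministic tail $\int_v^\infty \rho(t,w)\,dw$ and using the Burgers equation \eqref{eqn:burgers} to replace $\partial_t\rho$, the leading-order deterministic drift cancels and one is left with a discrete approximation of $\lan \bar{\Psi}^N_U(s), f''(s)+\partial_s f(s)\ran$, the current fluctuation $\sqrt{N}\bigl[\bar{\eta}(x)(1-\bar{\eta}(x+1)) - \rho(1-\rho)\bigr]$, and a martingale $M^N(f)$.

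Next, I would invoke a non-equilibrium Boltzmann--Gibbs principle (in the spirit of \cite{CY} or the version tailored to WASEP in \cite{DG}) to replace this nonlinear current by its linearization $(1-2\rho(s,v))$ times the gradient of $\bar{\Psi}^N_U$, producing precisely the drift $-\a \bigl((1-2\rho(s)) f(s)\bigr)'$ paired against $\bar{\Psi}^N_U$ in \eqref{eqn:spdePsiBar}. A routine computation of the predictable quadratic variation of $M^N(f)$ using the exclusion structure gives convergence to $\int_0^t\int_\R (f'(s,v))^2\cdot 2\rho(s,v)(1-\rho(s,v))\,dv\,ds$, which after integration by parts corresponds to the martingale term in \eqref{eqn:spdePsiBar} (recall that $\bar{\Psi}^N_U$ is an \emph{integrated} field, so $f$ in the statement plays the role of a primitive of the test function dual to the density).

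The main obstacle is tightness in the stronger topology $D([0,T], D(\R))$ rather than the negative-index Sobolev space in which \cite{DPS,DG} already proved convergence; the whole point of reproving this here is the upgrade to pointwise values in $v$, which is indispensable for the subsequent non-linear change of variables $\psi_U = \zeta^+_\rho \circ \trans$ needed for Theorem \ref{thm:2.1}. Following the BG method, I would establish uniform moment bounds of the form $E[|\bar{\Psi}^N_U(t,v_1)-\bar{\Psi}^N_U(t,v_2)|^{2\k}]\le C|v_1-v_2|^{\k}$ and $E[|\bar{\Psi}^N_U(t_1,v)-\bar{\Psi}^N_U(t_2,v)|^{2\k}]\le C|t_1-t_2|^{\k/2}$ uniformly in $N$. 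The spatial bound propagates from Assumption 3-(1)(iii) at $t=0$ using the $\{0,1\}$-valuedness of $\bar{\eta}$ and energy estimates on the semimartingale decomposition; the temporal bound follows by combining the same decomposition with quadratic-variation estimates for $M^N$ and the Boltzmann--Gibbs replacement. Exponential moments of $\pi_0^N([0,\infty))$ from Assumption 3-(1)(i) supply a priori $L_r^2$-control of the tails uniformly in $N$.

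Finally, any subsequential limit satisfies the martingale problem \eqref{eqn:spdePsiBar}, so uniqueness of the linear SPDE \eqref{eq:SPDE-Prop} in the class $C([0,T],C(\R))\cap C([0,T],L_e^2(\R))$ identifies the limit. Uniqueness follows by passing to the mild form via the heat kernel on $\R$ (no boundary conditions are needed here) and applying a Gronwall argument in the weighted $L_r^2$-norms, using that $\rho(t,\cdot)$ and $\rho'(t,\cdot)$ are bounded by the regularity of classical solutions of \eqref{eqn:burgers}. The continuity of trajectories of the limit in both arguments then drops out from the moment bounds and Kolmogorov's criterion, giving the claimed regularity $\bar{\Psi}_U \in C([0,T],C(\R))$.
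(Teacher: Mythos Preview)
Your outline differs from the paper's route in an essential way, and the difference is exactly where the gap lies. The paper does not give a self-contained proof of this proposition; it says (in the sentence preceding the statement and in Remark~\ref{rem:3.1} right after it) that the result follows from the machinery of Section~\ref{section-4}, i.e.\ from the proof of Theorem~\ref{thm:2.2}. That machinery is the \emph{microscopic Hopf--Cole transformation}: one passes from $\bar{\eta}^N_t$ to the exponential height $\zeta^N_t(x)=\exp\{-(\log\e)\sum_{y\ge x}\bar{\eta}^N_t(y)\}$, which solves a genuinely \emph{linear} lattice SDE (Lemma~\ref{lem:4.3}). The mild form \eqref{barphi-2} then lets one read off the moment bounds of Lemmas~\ref{Est-mon} and~\ref{Lemma-barphi-1} directly from discrete heat-kernel estimates (Lemma~\ref{Ker-est}), and these are what give tightness in $D([0,T],D(\R))$. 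No Boltzmann--Gibbs step is used anywhere.

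Your proposal instead keeps the original height field and appeals to a non-equilibrium Boltzmann--Gibbs principle for the current, following \cite{DG}. That works fine for identifying the limit \emph{after testing against $f\in C_0^{1,2}$}, and indeed \cite{DG} obtain \eqref{eq:SPDE-Prop} in $\mathcal{S}'(\R)$ this way. The gap is your tightness step. The Boltzmann--Gibbs replacement is an $L^2$ bound on a time-integrated, spatially-tested quantity; it says nothing about $\bar{\Psi}^N_U(t,v)$ at a fixed $v$. The semimartingale for $\bar{\Psi}^N_U(t,v)$ without a test function has a drift equal to $\sqrt{N}$ times the instantaneous current across the single bond at $\lfloor Nv\rfloor$, which does not close in terms of $\bar{\Psi}^N_U$ and cannot be linearized pointwise. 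Consequently the ``energy estimates on the semimartingale decomposition'' you invoke do not yield $E[|\bar{\Psi}^N_U(t,v_1)-\bar{\Psi}^N_U(t,v_2)|^{2\k}]\le C|v_1-v_2|^{\k}$ uniformly in $N$. When you write ``following the BG method'' for these bounds you are implicitly invoking Hopf--Cole, since that linearization is precisely what \cite{BG} use to obtain them; but then the earlier Boltzmann--Gibbs step becomes redundant, and your argument collapses into the paper's. In short: either commit to Hopf--Cole (as the paper does, via Section~\ref{section-4}) and drop Boltzmann--Gibbs, or explain a genuinely new mechanism for pointwise tightness---the hybrid you sketch does not stand on its own.
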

\begin{rem}
{\rm (1)} In Assumption 3-(1)(i), $\pi_0^N([0, \infty))$ represents the initial
particle number divided by $N$ of $\bar{\eta}_t$ on the positive side.
Recalling the definition of the empirical measures of vacant sites of
$\bar{\eta}_t$: $\hat{\pi}_t^N(dv) =\frac1N \sum_{x \in \Z}
(1-\bar{\eta}_t^N(x)) \de _{x/N}(dv)$ given in Lemma 4.3 of \cite{FS},
this assumption implies a similar condition for the initial density
$\hat{\pi}_0^N((-\infty, 0])$ of vacant sites divided by $N$ on the
negative side by the symmetry in the state space $\mathcal{X}_U$ of
$\bar{\eta}_t$ given in Section 4.1 of \cite{FS}.
\\
{\rm (2)} The fluctuation limit for $\bar{\Psi}_U^N(t,v)$ on the positive side
can be studied similarly to Theorem \ref{thm:2.2}.  To study it on the
negative side, we note that $\bar{\Psi}_U^N(t,v)$ is equal to
\[
\hat{\bar{\Psi}}^N_U(t,v) := \sqrt{N} \Bigl(
\hat{\pi}_t^N\bigl((-\infty,v]
\bigr) - \int_{-\infty}^v (1-\rho(t,w))dw \Bigr)
\]
with an error less than $\sqrt{N}/N$.  The fluctuation limit for
$\hat{\bar{\Psi}}^N_U(t,v)$ (in particular, the tightness of the Hopf-Cole
transformed process) on the negative side is shown similarly to Theorem
\ref{thm:2.2} by looking at $1-\bar{\eta}_t^N(x)$ instead of
$\bar{\eta}_t^N(x)$.

\end{rem}
\begin{rem}
Dittrich and G\"{a}rtner \cite{DG} proved the fluctuation results for
$\bar{\xi}_U^N(t,dv)$ as a distribution-valued process. However, this is
not sufficient for our purpose. Indeed, since we will apply a nonlinear
transformation in the next stage, we need to establish the convergence in a
usual function space formulated as in Proposition \ref{prop:ConvPsiBar}.
This is essentially carried out in Section \ref{section-4}.
\end{rem}
We now prepare two lemmas to deduce Theorem \ref{thm:2.0} from Proposition
\ref{prop:ConvPsiBar}. Recall that  $p\in\mathcal{P}$ determines
$\psi_p$ and $\tilde\psi_p^N$ as well as $\bar\eta =
(\bar\eta(x))_{x\in\Z}$ and the empirical measures $\pi^N(dv)$ on $\R$. For simplicity, we will write $\pi(dv)$ for
$\pi^1(dv)$ in the following. The
next lemma concerns the indented curves $\check\psi_U^N$ and
$\check\psi_U^1$ (with $N=1$), obtained by rotating $\tilde\psi_p^N$
respectively $\tilde\psi_p^1$ as described before.
\begin{lem}\label{lem:3.2-a}
We number the set $\{x\in\Z; \bar\eta(x)=1\}$ from the right as
$\{\bar{q}_i\}_{i=1}^\infty$, that is, $\bar{q}_1 = \max\{x\in\Z;
\bar\eta(x)=1\},\  \bar{q}_2 = \max\{x<\bar{q}_1; \bar\eta(x)=1\}$ and so
on. Then, we have that
\begin{equation} \label{eq:3.2-a}
\check\psi_U^1(v) = \sqrt{2}\pi([\sqrt{2}v,\infty)) + v,
\end{equation}
for all $v\in \cup_{i=0}^\infty [(\bar{q}_{i+1}+1)/\sqrt{2}, \bar{q}_i/
\sqrt{2}]$, where $\bar{q}_0=\infty$. In particular for arbitrary $v\in\R$
\begin{align} \label{eq:3.2-b}
& |\check\psi_U^1(v) - \{\sqrt{2}\pi([\sqrt{2}v,\infty)) + v\}| \le
\sqrt{2},\\
& |\check\psi_U^N(v) - \{\sqrt{2}\pi^N([\sqrt{2}v,\infty)) + v\}| \le
\frac{\sqrt{2}}{N}.  \label{eq:3.2-c}
\end{align}
\end{lem}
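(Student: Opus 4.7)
The plan is purely geometric. Under the rotation $(u,y)\mapsto((u-y)/\sqrt{2},(u+y)/\sqrt{2})$, the filled-in graph of $\tilde\psi_p^1=\psi_p$, extended by the vertical half-line above its value at $u=0$ and by the horizontal half-line to the right of $p_1$, becomes a continuous piecewise linear curve $w=\check\psi_U^1(v)$ whose slopes alternate between $+1$ (images of unit horizontal steps of $\psi_p$, vacancies) and $-1$ (images of unit vertical steps, particles), each segment having $v$-length $1/\sqrt{2}$. Under the convention ``site $x\in\Z$ of $\bar\eta$ $\leftrightarrow$ $v$-segment $[x/\sqrt{2},(x+1)/\sqrt{2}]$'', a direct inspection shows $\bar\eta(x)=1$ iff that segment has slope $-1$, so $\bar{q}_i/\sqrt{2}$ is precisely the left endpoint of the $i$-th down-segment from the right.

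For \eqref{eq:3.2-a}, on $v\in[(\bar{q}_{i+1}+1)/\sqrt{2},\bar{q}_i/\sqrt{2}]$ every intermediate site is vacant, so this portion of $\check\psi_U^1$ is a chain of up-segments, i.e.\ a straight line $\check\psi_U^1(v)=v+c_i$. On the same interval $\pi([\sqrt{2}v,\infty))=|\{j:\bar{q}_j\ge \sqrt{2}v\}|\equiv i$, so only the identification $c_i=\sqrt{2}\,i$ needs checking. The base case $c_0=0$ is the identity $\check\psi_U^1(v)=v$ on the rotated $u$-axis. For the inductive step, writing the $i$-th particle's down-segment as $\check\psi_U^1(v)=-v+d_i$ and imposing continuity at its two endpoints $\bar{q}_i/\sqrt{2}$ and $(\bar{q}_i+1)/\sqrt{2}$ yields the two equations $d_i=c_i+\sqrt{2}\,\bar{q}_i$ and $d_i=c_{i-1}+\sqrt{2}(\bar{q}_i+1)$, whose difference gives $c_i-c_{i-1}=\sqrt{2}$.

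For the uniform bound \eqref{eq:3.2-b}, set $F(v):=\sqrt{2}\pi([\sqrt{2}v,\infty))+v$. By \eqref{eq:3.2-a}, $\check\psi_U^1=F$ on the union appearing there, so only the particle segments $v\in[\bar{q}_i/\sqrt{2},(\bar{q}_i+1)/\sqrt{2}]$ remain. On the open interval to the right of $\bar{q}_i/\sqrt{2}$ one has $\check\psi_U^1(v)=-v+\sqrt{2}\,i+\sqrt{2}\,\bar{q}_i$ (from the previous step) and $F(v)=\sqrt{2}(i-1)+v$ (since $F$ has a left jump of $-\sqrt{2}$ at $\bar{q}_i/\sqrt{2}$), so $\check\psi_U^1(v)-F(v)=-2v+\sqrt{2}(\bar{q}_i+1)$ is linear with value $\sqrt{2}$ at the left endpoint and $0$ at the right, giving $|\check\psi_U^1-F|\le\sqrt{2}$ pointwise. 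The $N$-dependent bound \eqref{eq:3.2-c} then follows immediately from the scaling identities $\check\psi_U^N(v)=\tfrac1N\check\psi_U^1(Nv)$ and $\sqrt{2}\pi^N([\sqrt{2}v,\infty))+v=\tfrac1N\bigl(\sqrt{2}\pi^1([\sqrt{2}Nv,\infty))+Nv\bigr)$, both of which hold by inspection from the definitions of $\pi^N$ and of the $N$-scaling of $\tilde\psi_p$.

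The only genuinely delicate point, in my view, is pinning down the site-to-segment convention so that $\bar{q}_i/\sqrt{2}$ is the left endpoint of its down-segment consistently with Figure 2; a one-step shift would move $\bar{q}_i$ to the right endpoint and shift every interval in \eqref{eq:3.2-a} by $1/\sqrt{2}$, breaking the statement. Beyond this bookkeeping the argument uses no probabilistic input and is elementary piecewise-linear geometry.
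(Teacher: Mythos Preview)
Your proof is correct and follows essentially the same geometric route as the paper: identify the piecewise-linear structure of $\check\psi_U^1$, match slopes on the vacancy intervals, and then control the discrepancy on the particle segments before scaling. The only cosmetic difference is that the paper verifies the intercept at $v=\bar{q}_i/\sqrt{2}$ directly via the identity $\bar{q}_i=p_i-i$ (so that $h(\bar{q}_i)=p_i+i=\sqrt{2}\,\check\psi_U^1(\bar{q}_i/\sqrt{2})$), whereas you obtain $c_i=\sqrt{2}\,i$ by an equivalent induction using continuity at the two endpoints of each down-segment.
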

\begin{proof}
Set $h(v) := 2\pi([v,\infty)) + v$.  Since $\pi([\bar{q}_i,\infty)) =
\sharp\{j;\bar{q}_j \ge \bar{q}_i\} = i$ for $i\in \N$, we have
$h(\bar{q}_i) = 2i+\bar{q}_i = 2i + (p_i-i) = p_i+i$, which is equal to the
height of the curve $\check\psi^1(v)$ at $v=\bar{q}_i/\sqrt{2}$ multiplied
by $\sqrt{2}$, i.\,e. $h(\bar{q}_i) = \sqrt{2}\check\psi^1
(\bar{q}_i/\sqrt{2})$. Therefore \eqref{eq:3.2-a} holds for
$v=\bar{q}_i/\sqrt{2}$. The functions on both sides of \eqref{eq:3.2-a}
have slope $1$ on the intervals $\bigl((\bar{q}_{i+1}+1)/\sqrt{2},
\bar{q}_i/\sqrt{2}\bigr)$ which yields the first assertion. The function
$h(v)/\sqrt{2}$ has a jump with size $\sqrt{2}$ at $v=\bar{q}_i$ and this
leads to \eqref{eq:3.2-b}. \eqref{eq:3.2-c} follows from \eqref{eq:3.2-b}
by scaling.
\end{proof}
The second lemma concerns the curve $\check\psi_U$ obtained by rotating
$\psi_U$.
\begin{lem} \label{lem:3.2-b}
It holds that
\begin{equation} \label{eq:3.2-d}
\check\psi_U(v) = \sqrt{2}\int_{\sqrt{2}v}^\infty \rho(w)dw + v,
\quad v\in \R,
\end{equation}
where $\rho(v) := \Phi_U(\psi_U)(v)$, see Theorem  \ref{thm:2.0} for the map $\Phi_U$.
\end{lem}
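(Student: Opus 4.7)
The plan is to derive \eqref{eq:3.2-d} directly from the characterization of $\Phi_U$ recalled just before this lemma: if $\rho = \Phi_U(\psi_U)$, then the graph of $\psi_U$ admits the parametric representation $\tilde v \mapsto (u, \psi_U(u)) = (\zeta^-_\rho(\tilde v),\, \zeta^+_\rho(\tilde v))$, $\tilde v \in \R$, where $\zeta^-_\rho(\tilde v) = \int_{-\infty}^{\tilde v}(1-\rho(w))\,dw$ and $\zeta^+_\rho(\tilde v) = \int_{\tilde v}^\infty \rho(w)\,dw$. First I would apply the $45^\circ$ counter-clockwise rotation $(u,y) \mapsto ((u-y)/\sqrt 2,\,(u+y)/\sqrt 2)$ to each point of this curve, obtaining the parametric description of $\check\psi_U$:
\[
v(\tilde v) = \frac{\zeta^-_\rho(\tilde v) - \zeta^+_\rho(\tilde v)}{\sqrt 2}, \qquad \check\psi_U(v(\tilde v)) = \frac{\zeta^-_\rho(\tilde v) + \zeta^+_\rho(\tilde v)}{\sqrt 2}.
\]

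The key algebraic step is to prove the identity $\zeta^-_\rho(\tilde v) - \zeta^+_\rho(\tilde v) = \tilde v$. Differentiation yields $(1-\rho(\tilde v)) + \rho(\tilde v) = 1$, so the difference equals $\tilde v + C$ for some constant $C \in \R$; the essential point is that $C = 0$. Granted this, $v(\tilde v) = \tilde v/\sqrt 2$, and substituting $\tilde v = \sqrt 2 v$ into the formula for $\check\psi_U(v(\tilde v))$ gives
\[
\check\psi_U(v) = \frac{\zeta^-_\rho(\sqrt 2 v) + \zeta^+_\rho(\sqrt 2 v)}{\sqrt 2} = \frac{\sqrt 2 v + 2\zeta^+_\rho(\sqrt 2 v)}{\sqrt 2} = v + \sqrt 2 \int_{\sqrt 2 v}^\infty \rho(w)\,dw,
\]
which is \eqref{eq:3.2-d}.

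The main obstacle is verifying the normalization $C = 0$, equivalently $\int_{-\infty}^0(1-\rho(w))\,dw = \int_0^\infty \rho(w)\,dw$. This is the continuum version of the combinatorial identity $\sum_{z \le 0}(1-\bar\eta(z)) = \sum_{z \ge 1}\bar\eta(z)$, which holds for every configuration $\bar\eta$ coming from a $p \in \mathcal{P}$ under the rotation-and-projection construction of Section~4 of \cite{FS} and is preserved in the passage to the continuum limit; it is built into the definition of $\Phi_U$ in Proposition~4.4 of \cite{FS}. Should one wish to avoid direct appeal to that construction, an alternative is to approximate $\psi_U$ by $\tilde\psi^N_{p_N}$ with $p_N \in \mathcal P$ such that $\tilde\psi^N_{p_N} \to \psi_U$, apply \eqref{eq:3.2-c} at $v = 0$, and pass to the limit via the hydrodynamic convergence $\pi^N(dw) \to \rho(w)\,dw$ of Proposition~4.2 of \cite{FS}; matching both sides at $v = 0$ then forces $C = 0$.
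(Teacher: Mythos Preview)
Your proof is correct and takes a somewhat different route from the paper's. The paper works with the explicit description $\rho(v) = -\psi_U'(G_{\psi_U}^{-1}(v))/\{1-\psi_U'(G_{\psi_U}^{-1}(v))\}$, where $G_{\psi_U}(u) = u - \psi_U(u)$: after writing $\check\psi_U(v) = \{G_{\psi_U}^{-1}(\sqrt 2\, v) + \psi_U(G_{\psi_U}^{-1}(\sqrt 2\, v))\}/\sqrt 2$ and differentiating via the chain rule, it finds $\check\psi_U'(v) = 1 - 2\rho(\sqrt 2\, v)$, which matches the derivative of the right-hand side; the additive constant is then fixed by noting that both curves share the asymptote $y = v$ as $v\to\infty$.

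Your argument instead uses the parametric description through $\zeta^\pm_\rho$ and reduces everything to the normalization $\zeta^-_\rho(\tilde v) - \zeta^+_\rho(\tilde v) = \tilde v$. That identity is indeed, as you say, a direct consequence of the definition of $\Phi_U$, though the cleanest way to see it is more direct than either of the two routes you sketch: substituting $w = G_{\psi_U}(u')$ in the defining integrals yields $\zeta^-_\rho(v) = G_{\psi_U}^{-1}(v)$ and $\zeta^+_\rho(v) = \psi_U(G_{\psi_U}^{-1}(v))$, so $\zeta^-_\rho(v) - \zeta^+_\rho(v) = G_{\psi_U}(G_{\psi_U}^{-1}(v)) = v$ immediately. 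The paper's asymptote argument is marginally more economical since it sidesteps this check altogether, but your parametric viewpoint makes the role of $\zeta^\pm_\rho$ (and hence the link to the discrete picture in Lemma~\ref{lem:3.2-a}) more transparent.
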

\begin{proof}
An explicit representation of the rotation via its rotation matrix yields
\[
\begin{pmatrix}
v \\ \check\psi(v)
\end{pmatrix}
= \frac1{\sqrt{2}}
\begin{pmatrix}
G_\psi(u) \\ u+ \psi(u)
\end{pmatrix},
\]
where $G_\psi(u) = u- \psi(u)$.  This implies that $\check\psi(v) =
\bigl\{ G_\psi^{-1}(\sqrt{2}v) + \psi(G_\psi^{-1}(\sqrt{2}v))
\bigr\}/\sqrt{2}$. Since $(G_\psi^{-1})'(v) = 1/\{1-\psi'(G_\psi^{-1}
(v))\}$, this implies
\[
\check\psi'(v) = \frac{1+\psi'(G_\psi^{-1}(\sqrt{2}v))}
{1-\psi'(G_\psi^{-1}(\sqrt{2}v))}.
\]
Together with $\rho(v) = \Phi_U(\psi)(v) = - \psi'(G_\psi^{-1}(v))
/\{1-\psi'(G_\psi^{-1}(v))\}$ or written equivalently as
$\psi'(G_\psi^{-1}(v)) = - \rho(v) / (1-\rho(v))$ we obtain
$\check\psi'(v) = 1-2\rho(\sqrt{2}v)$.

The derivative of the right hand side of \eqref{eq:3.2-d} is given by
$1-2\rho(\sqrt{2}v)$, which coincides with $\check\psi'(v)$. Since both
curves have $\{y=v\}$ as an asymptotic line for $v\to\infty$,
\eqref{eq:3.2-d} is proven for all $v\in \R$.
\end{proof}
There is an immediate corollary based on 
Lemmas \ref{lem:3.2-a} and \ref{lem:3.2-b}.
\begin{cor} \label{cor:3.2-c}
For all $t\ge 0$ and $v\in \R$ the relation
\[
\check\Psi_U^N(t,v) = \sqrt{2} \bar\Psi_U^N(t,\sqrt{2}v) +
\check{R}^N(t,v)
\]
holds with an error term satisfying $|\check{R}^N(t,v)|\le \sqrt{2/N}$.
\end{cor}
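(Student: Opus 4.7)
The plan is a direct substitution: the two lemmas translate the rotated height function into a tail of the empirical measure, and a tail of the density $\rho(t,\cdot)$, respectively, modulo errors of the right size.

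First I would apply Lemma \ref{lem:3.2-a} (in its scaled form \eqref{eq:3.2-c}, read at time $N^2 t$ so that $\pi^N$ becomes $\pi_t^N$, and $\bar\eta$ becomes $\bar\eta_t^N$) to write
\[
\check\psi_U^N(t,v) = \sqrt{2}\,\pi_t^N\bigl([\sqrt{2}v,\infty)\bigr) + v + r_1^N(t,v),\qquad |r_1^N(t,v)|\le \tfrac{\sqrt{2}}{N}.
\]
Next, Lemma \ref{lem:3.2-b}, applied with $\psi=\psi_U(t,\cdot)$ and $\rho=\rho(t,\cdot)$, gives the exact identity
\[
\check\psi_U(t,v) = \sqrt{2}\int_{\sqrt{2}v}^{\infty}\rho(t,w)\,dw + v.
\]

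Subtracting and multiplying by $\sqrt{N}$, the linear term $v$ cancels, and
\[
\check\Psi_U^N(t,v) = \sqrt{N}\Bigl(\sqrt{2}\,\pi_t^N\bigl([\sqrt{2}v,\infty)\bigr) - \sqrt{2}\int_{\sqrt{2}v}^{\infty}\rho(t,w)\,dw\Bigr) + \sqrt{N}\,r_1^N(t,v).
\]
The bracketed term is exactly $\sqrt{2}\,\bar\Psi_U^N(t,\sqrt{2}v)$ by the definition of $\bar\Psi_U^N$. Setting $\check R^N(t,v):=\sqrt{N}\,r_1^N(t,v)$, the bound on $r_1^N$ yields $|\check R^N(t,v)|\le \sqrt{N}\cdot\sqrt{2}/N = \sqrt{2/N}$, as claimed.

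There is essentially no obstacle here; the work has already been done in the two preceding lemmas, and the corollary is just the statement obtained by taking their difference, scaling by $\sqrt{N}$, and tracking the size of the deterministic error. The only thing to be slightly careful about is confirming that the scaled version of Lemma \ref{lem:3.2-a} is to be applied at the dynamical time $N^2 t$, so that the pre-limit density is $\rho(t,\cdot)$ (not $\rho(N^2 t,\cdot)$), which matches the subtraction performed by the definition of $\check\Psi_U^N$ via $\check\psi_U(t,\cdot)$.
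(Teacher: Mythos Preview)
Your proof is correct and is exactly the argument the paper has in mind: it states the corollary as immediate from Lemmas~\ref{lem:3.2-a} and~\ref{lem:3.2-b} without giving further details, and your subtraction-and-scaling computation fills in precisely those details.
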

\begin{proof}[Proof of Theorem \ref{thm:2.0}]
By Corollary \ref{cor:3.2-c}, the limits $\check\Psi_U(t,v)$ and
$\bar\Psi_U(t,v)$ of $\check\Psi_U^N(t,v)$ and $\bar\Psi_U^N(t,v)$ are
related by $\check\Psi_U(t,v) = \sqrt{2}\bar\Psi_U(t,\sqrt{2}v)$.
Therefore we can derive the SPDE \eqref{eq:SPDE2.0} from the SPDE
\eqref{eq:SPDE-Prop} in the weak formulation by replacing the space-time
white noise properly. Corollary \ref{cor:3.2-c} also shows that Assumptions
1 and 3 are mutually equivalent.
\end{proof}
\subsection{Proof of Theorem \ref{thm:2.1}}
In order to derive the SPDE \eqref{eq:SPDE-1} for the limit
$\Psi_U(t,u)$ of $\Psi^N_U(t,u)$, we are not able to apply the same
transformation used to get $\psi_U(t, \cdot)$ from $\rho(t, \cdot)$ because the random
noise certainly makes it impossible to extend it to the spaces containing
$\check\Psi_U$ or equivalently $\bar{\Psi}_U$ and $\Psi_U$. Instead, we
exploit some of the calculations made in Section 4 of \cite{FS}. For every
$f \in C_0(\R_+^\circ)$, we set $F(u)= \int_0^u f(v)dv$, and then we have
that
\begin{align}
 \int_{\R_+^\circ} \tilde{\psi}_U^N(t,u) f(u) du =
&\frac1N \sum_{x \in \Z} F\bigl(\tfrac1N \trai^N(x) \bigr)
\bar{\eta}_t^N(x), \label{eqn:cor1} \\
\int_{\R_+^\circ} \psi_U(t,u) f(u) du = & \int_{\R} F\bigl(\trai(v)
\bigr)
\rho(t,v) dv. \label{eqn:cor2}
\end{align}
These are the key identities for our next proposition.
We will employ Proposition \ref{prop:ConvPsiBar} rather than Theorem
\ref{thm:2.0}, but which are actually equivalent as we observed
above.
\begin{prop}\label{prop:formulaPsi}
The weak limit $\Psi_U(t,u)$ of $\Psi^N_U(t,u)$ as $N \to \infty$ exists
and is given by the formula
\begin{equation}\label{eqn:formulaPsi}
\Psi_U(t,u) = \frac{\bar{\Psi}_U\bigl(t,\tra (u)\bigr)}
{1-\rho\bigl(t,\tra (u)\bigr)}.
\end{equation}
\end{prop}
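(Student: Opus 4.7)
The starting point is the pair of identities \eqref{eqn:cor1}--\eqref{eqn:cor2}: for every test function $f\in C_0(\R_+^\circ)$ with primitive $F(u)=\int_0^u f(v)\,dv$, the integrals of $\tilde\psi_U^N(t,\cdot)$ and of $\psi_U(t,\cdot)$ against $f$ are read off from the particle system through the monotone non-decreasing step function $G^N(v):=\tfrac1N\zeta^-_{\bar\eta_t^N}([Nv])$, which approximates $\trai(v)=\zeta^-_{\rho(t,\cdot)}(v)$ pointwise. I would scale the difference by $\sqrt N$ and decompose
\[
\int_{\R_+^\circ}\Psi_U^N(t,u)f(u)\,du \;=\; I_1^N + I_2^N,
\]
where
\begin{align*}
I_1^N &:= \sqrt N\int_\R F(G^N(v))\bigl(\pi_t^N(dv)-\rho(t,v)\,dv\bigr),\\
I_2^N &:= \sqrt N\int_\R \bigl[F(G^N(v))-F(\trai(v))\bigr]\rho(t,v)\,dv,
\end{align*}
and then identify the weak limit of each piece via the fluctuation field $\bar\Psi_U^N$ and Proposition~\ref{prop:ConvPsiBar}.

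For $I_1^N$, since $G^N$ is non-decreasing I would write $F(G^N(v))=\int_0^\infty f(u)\mathbf{1}_{\{G^N(v)>u\}}\,du$ and apply Fubini, obtaining
\[
I_1^N \;=\; \int_0^\infty f(u)\,\bar\Psi_U^N\bigl(t,(G^N)^{-1}(u)\bigr)\,du \;+\; O(1/\sqrt N),
\]
where $(G^N)^{-1}$ is the right-continuous generalized inverse and the error comes from the atoms of $\pi_t^N$. Using the hydrodynamic limit for the vacant-site density, $(G^N)^{-1}(u)\to\tra(u)$ uniformly on compacts of $\R_+^\circ$, and together with the weak convergence of $\bar\Psi_U^N$ to $\bar\Psi_U\in C([0,T],C(\R))$ provided by Proposition~\ref{prop:ConvPsiBar}, a continuous-mapping argument yields $I_1^N\to\int_0^\infty f(u)\,\bar\Psi_U(t,\tra(u))\,du$.

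For $I_2^N$, a first-order Taylor expansion gives $F(G^N(v))-F(\trai(v))=f(\trai(v))(G^N(v)-\trai(v))+O(|G^N(v)-\trai(v)|^2)$, and by construction $\sqrt N(G^N(v)-\trai(v))$ coincides, up to $O(1/\sqrt N)$, with the vacant-site fluctuation field $\hat{\bar\Psi}_U^N(t,v)$ of Remark~3.2, which in turn differs from $\bar\Psi_U^N(t,v)$ by at most $1/\sqrt N$. Hence $I_2^N\to\int_\R f(\trai(v))\,\rho(t,v)\,\bar\Psi_U(t,v)\,dv$, and the change of variables $u=\trai(v)$, $du=(1-\rho(t,v))\,dv$, rewrites this as $\int_0^\infty f(u)\,\rho(t,\tra(u))\,\bar\Psi_U(t,\tra(u))/(1-\rho(t,\tra(u)))\,du$. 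Combining $I_1^N$ and $I_2^N$ the coefficient becomes $1+\rho/(1-\rho)=1/(1-\rho)$, and since $f$ is arbitrary this identifies $\Psi_U(t,u)$ as in \eqref{eqn:formulaPsi}.

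The main technical obstacle is turning these formal manipulations into honest weak convergence: one must control the jumps of the step function $G^N$, evaluate $\bar\Psi_U^N$ at the \emph{random} point $(G^N)^{-1}(u)$ through joint continuity of the limit together with uniform convergence of $(G^N)^{-1}$ on compacts, and dominate the quadratic Taylor remainder in $I_2^N$ uniformly in $N$. These reductions rest on the $2\k$-moment control inherited from Assumption~3 and on the tightness already contained in Proposition~\ref{prop:ConvPsiBar}; together they bridge the algebraic identities \eqref{eqn:cor1}--\eqref{eqn:cor2} and the final formula \eqref{eqn:formulaPsi}.
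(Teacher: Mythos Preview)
Your argument is correct and reaches the same limit, but your decomposition is genuinely different from the paper's. The paper inserts the \emph{deterministic} quantity $F\bigl(\trai(x/N)\bigr)$ into the sum, obtaining
\[
S_1^N=\sqrt{N}\Bigl(\tfrac1N\sum_x F\bigl(\trai(\tfrac{x}{N})\bigr)\bar\eta_t^N(x)-\int_\R F(\trai(v))\rho(t,v)\,dv\Bigr),\qquad
S_2^N=\tfrac1{\sqrt N}\sum_x\bigl(F(\tfrac1N\trai^N(x))-F(\trai(\tfrac{x}{N}))\bigr)\bar\eta_t^N(x),
\]
whereas you insert the \emph{random} $F(G^N)$ against the deterministic $\rho\,dv$, giving your $I_1^N,I_2^N$. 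The trade-off is this: the paper's $S_1^N$ is handled by a clean summation/integration by parts that pairs $\bar\Psi_U^N$ with a fixed compactly supported test function, so convergence is immediate; the price is that $S_2^N$ carries the random measure $\bar\eta_t^N$, and an extra appeal to the hydrodynamic limit (Proposition 4.2 of \cite{FS}) is needed to replace it by $\rho$. Your route reverses the difficulty: $I_2^N$ is easier because the integrating measure $\rho\,dv$ is deterministic, but $I_1^N$ forces you to evaluate $\bar\Psi_U^N$ at the random point $(G^N)^{-1}(u)$. To make that step rigorous you should invoke Skorohod's representation theorem at the outset (as the paper does), so that $\bar\Psi_U^N\to\bar\Psi_U$ uniformly on compacts a.s.; together with the convergence in probability of $(G^N)^{-1}$ to $\tra$ (from the hydrodynamic limit) this justifies the substitution. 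With that one addition your outline is complete.
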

\begin{proof}
Since the convergence in Proposition \ref{prop:ConvPsiBar} is only in a weak
sense, we start by using Skorohod's theorem and then assume that
$\bar{\Psi}^N_U(t,u)$ converges almost surely to $\bar{\Psi}_U(t,u)$ on
$D([0,T], D(\R))$ by choosing  a proper probability space. In order to
simplify the notation, we still use the same name in the following. Then, by
\eqref{eqn:cor1} and \eqref{eqn:cor2}, for each function $f \in
C^{2}_0(\R_+^\circ)$, we can compute
\begin{align*}
\int_{\R_+^\circ} \Psi_U^N(t,u) f(u) du = & \sqrt{N} \Bigl( \tfrac1N
\sum_{x \in \Z} F\bigl(\trai \bigl(\tfrac{x}{N}\bigr) \bigr)
\bar{\eta}_t^N(x) - \int_{\R} F\bigl(\trai (v) \bigr) \rho(t,v) dv
\Bigr)\\
& + \frac1{\sqrt{N}}   \sum_{x \in \Z}  \Bigl( F\bigl(\tfrac1N
\trai^N(x) \bigr) - F\bigl(\trai \bigl(\tfrac{x}{N}\bigr) \bigr) \Bigr)
\bar{\eta}_t^N(x) =: S_1^N +S_2^N.
\end{align*}
Integration by parts and summation by parts yield
\begin{align*}
\int_{\R} F\bigl(\trai (v) \bigr) \rho(t,v) dv &= \int_{\R}
f\bigl(\trai (v) \bigr) (1- \rho(t,v))  dv \int_v^\infty \rho(t,w)dw,\\
\frac1N \sum_{x \in \Z} F\bigl(\trai \bigl(\tfrac{x}{N}\bigr) \bigr)
\bar{\eta}_t^N(x) &= \int_{\R}f\bigl(\trai (v) \bigr)(1- \rho(t,v))
\pi_t^N([v, \infty))dv.
\end{align*}
Therefore $S_1^N$ can be written as an integral with respect to
$\bar{\Psi}_U^N$, and with the help of Proposition \ref{prop:ConvPsiBar}
and a simple substitution $u = \trai(v)$, we have that
\begin{equation}\label{eqn:convfirst}
\lim_{N \to \infty}S_1^N = \lim_{N \to \infty}\int_{\R} f\left(\trai
(v) \right) (1- \rho(t,v)) \bar{\Psi}_U^N(t,v) dv =\int_{\R_+^\circ}
f(u) \bar{\Psi}_U(t,\tra (u)) du.
\end{equation}
In the following, we are going to show
\begin{equation}\label{conve-s2}
\lim_{N \to \infty}S_2^N  = \int_{\R} f(\trai (v) ) \rho(t,v)
\bar{\Psi}_U(t,v) dv \text{ a.\,s.}
\end{equation}
By Taylor's formula, it holds
\[
F\bigl(\tfrac1N \trai^N(x) \bigr) - F\bigl(\trai\bigl(\tfrac{x}
{N}\bigr) \bigr) = f\bigl(\trai\bigl(\tfrac{x}{N}\bigr)\bigr) \Bigl(
\tfrac{\trai^N(x)}{N} - \trai\bigl(\tfrac{x}{N}\bigr)\Bigr) +
f'(\zeta_{N,x}) \Bigl( \tfrac{\trai^N(x)}{N} - \trai\bigl(\tfrac{x}
{N}\bigr)\Bigr)^2
\]
with $\zeta_{N,x} \in \bigl[ \min \{ \frac1N \trai^N (x), \trai
(\frac{x}{N})\}, \max \{ \frac1N \trai^N (x), \trai (\frac{x}
{N})\}\bigr]$.

The above appearing term $\frac1N \trai^N(x) - \trai\bigl(\frac{x}{N}
\bigr)$ is basically given by the process $\bar{\Psi}^N_U$. We show this
using the asymmetry property (see Subsection 4.1 in \cite{FS}) which leads
to
\[
\trai \bigl(\tfrac{x}{N}\bigr) = \left|\tfrac{x}{N}\right| +
\int_{\frac{x}{N}}^\infty \rho(t,v)dv \quad \text{and} \quad \tfrac1N
\trai^N (x) = \left|\tfrac{x}{N}\right| + \pi^N_t\bigl([\tfrac{x}{N},
\infty) \bigr) - \tfrac1N \bar{\eta}_t^N(x).
\]
Thus, using these relations, we see that
\begin{equation}\label{estimate-s2}
\begin{split}
\Bigl| S_2^N -\int_{\R} f&\left(\trai (v) \right) \rho(t,v)
\bar{\Psi}_U(t,v) dv \Bigr| \le \\
& \phantom{+s} |E_1| + |E_2| + \Bigl| \tfrac1N \sum_{x \in \Z}
f\bigl(\trai (\tfrac{x}{N}) \bigr)\bigl(\bar{\Psi}_U^N \bigl(t,
\tfrac{x}{N} \bigr) - \bar{\Psi}_U \bigl(t, \tfrac{x}{N} \bigr)\bigr)
\bar{\eta}_t^N(x)\Bigr| \\
&+ \Bigl|\tfrac1N \sum_{x \in \Z} f \bigl(\trai \bigl(\tfrac{x}
{N}\bigr) \bigr) \bar{\Psi}_U (t,  \tfrac{x}{N} ) \bar{\eta}_t^N(x) -
\int_{\R} f(\trai (v) ) \rho(t,v) \bar{\Psi}_U(t,v) dv \Bigr|,
\end{split}
\end{equation}
where
\[
E_1=  \frac{-1}{N^{3/2}} \sum_{x \in \Z} f\bigl(\trai \bigl(\tfrac{x}
{N}\bigr) \bigr) \bar{\eta}_t^N(x) \quad \text{and}\quad E_2=\frac{1}
{N^{3/2}}\sum_{x \in \Z} \bar{\eta}_t^N(x) f'(\zeta_{N,x})  \Bigl(
\bar{\Psi}^N_U(t,u) - \tfrac{\bar{\eta}_t^N(x)}{\sqrt{N}}\Bigr)^2.
\]
Clearly, $E_1 \to  0$ a.\,s. because of the extra $\sqrt{N}$ in the
denominator. On the other hand, from Proposition \ref{prop:ConvPsiBar}, in
particular, the fact that the  limit $\bar{\Psi}_U(t,u)$ of
$\bar{\Psi}^N_U(t,u)$ is in $C([0,T],C(\R))$ a.\,s., we know that
\[
\sup_{t\in [0, T],N \in \N, v\in [-K, K]} |\bar{\Psi}^N_U(t,v)|< \infty
\text{ a.\,s.},
\]
which implies  $E_2 \to 0$ a.\,s. by recalling that $f \in
C_0^2(\R_+^\circ)$.

To conclude the proof of \eqref{conve-s2}, let us now reformulate a
result which follows from Proposition 4.2 of \cite{FS}. Under our
assumptions, for any function $g \in C_0(\R_+^\circ)$, as $N
\to \infty$,
$\frac1N \sum_{x \in \Z} g\bigl(\trai\bigl(\frac{x}{N}\bigr)\bigr)
\bar{\eta}_t^N(x)$ converges to  $\int_{\R} g \bigl(\trai
(v)\bigr)\rho(t,v) dv$ in probability. Applying this result for
$g(\cdot)= f(\cdot) \bar{\Psi}_U(t,\tra(\cdot))$ and using the
compactness of the support of $f$, we have that the last term on the
right hand side of \eqref{estimate-s2} converges to $0$ a.s. In the
end, using Proposition \ref{prop:ConvPsiBar} again and recalling
that we apply Skorohod's theorem, we have
\[
\lim_{N \to \infty}\sup_{t\in [0, T], v\in [-K, K]}
|\bar{\Psi}^N_U(t,v)-\bar{\Psi}_U(t,v) |=0 \text{ a.\,s.},
\]
and thus applying the above result for $g(\cdot)=f(\cdot)$, we also see
that the third  term on the right hand side of \eqref{estimate-s2} converges
to $0$ a.\,s.  So, the proof of \eqref{conve-s2} is
completed.

Finally, we substitute with $u = \trai(v)$ and therefore the limit for
$S_2^N$ is given by
\[
\int_{\R} f(\trai (v)) \rho(t,v) \bar{\Psi}_U(t,v) dv =
\int_{\R_+^\circ} f(u) \frac{\rho(t, \tra (u))}{1-\rho(t,\tra (u))}
\bar{\Psi}_U(t,\tra (u))du,
\]
which completes the proof \eqref{eqn:formulaPsi} with the help of
\eqref{eqn:convfirst}.
\end{proof}
Now that we have a formula for the limit process the next step is to
identify the corresponding SPDE. A direct computation with $\psi_U(t,u) =
\zeta^+_{\rho(t,\cdot)}\bigl(\trans (u)\bigr)$ leads to the following
lemma, recall that $\rho_U(t,u) = - \psi_U'(t,u)$ is defined in Theorem
\ref{thm:2.1}.
\begin{lem}\label{lem:Rho}
We have that
\[
\rho_U(t,u)=\frac{\rho(t, \tra (u))}{1-\rho(t,\tra (u))} \quad
\text{and}\quad 1+\rho_U(t,u) = \frac{1}{1-\rho(t,\tra (u))}.
\]
\end{lem}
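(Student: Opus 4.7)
The plan is a direct chain-rule computation starting from the explicit representation $\psi_U(t,u) = \zeta^+_{\rho(t,\cdot)}\bigl(\trans(u)\bigr) = \int_{\tra(u)}^\infty \rho(t,w)\,dw$ recalled just before the lemma. Since the second identity is an immediate algebraic consequence of the first (namely, $1+\tfrac{a}{1-a} = \tfrac{1}{1-a}$ with $a=\rho(t,\tra(u))$), the whole task reduces to computing $\psi_U'(t,u)$.

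First I would differentiate in $u$ using the fundamental theorem of calculus, which gives
\[
\psi_U'(t,u) \;=\; -\,\rho\bigl(t,\tra(u)\bigr)\,\partial_u \tra(u).
\]
Next, since $\tra$ is by definition the inverse of the map $\trai(v)=\zeta^-_{\rho(t,\cdot)}(v)=\int_{-\infty}^v (1-\rho(t,w))\,dw$, the fundamental theorem of calculus gives $\trai'(v) = 1-\rho(t,v)$, and the inverse function theorem yields
\[
\partial_u \tra(u) \;=\; \frac{1}{\trai'(\tra(u))} \;=\; \frac{1}{1-\rho(t,\tra(u))}.
\]
Plugging this in produces $\psi_U'(t,u) = -\rho(t,\tra(u))/(1-\rho(t,\tra(u)))$, so that $\rho_U(t,u) = -\psi_U'(t,u)$ equals the first claimed expression.

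For the second identity I would just add $1$ and put the result over a common denominator, obtaining $1+\rho_U(t,u) = 1/(1-\rho(t,\tra(u)))$.

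There is really no serious obstacle here; the only thing to double-check is that the inverse function theorem applies, i.e.\ that $\trai$ is strictly increasing and smooth enough for the chain rule. This is guaranteed by the properties of $\rho$ inherited from Proposition 4.2 of \cite{FS} (in particular $0\le \rho <1$ so that $\trai'=1-\rho>0$ on the relevant set), together with the regularity of $\psi_U \in X_U$. Thus the lemma follows from a one-line chain rule, modulo these structural properties already established in \cite{FS}.
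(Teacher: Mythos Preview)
Your proof is correct and is exactly the ``direct computation with $\psi_U(t,u) = \zeta^+_{\rho(t,\cdot)}\bigl(\trans(u)\bigr)$'' that the paper invokes without spelling out. The chain-rule step using $\trai'(v)=1-\rho(t,v)$ and the inverse function theorem is precisely the intended argument, and the second identity is indeed the trivial algebraic consequence you note.
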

We are at the position to give the proof of Theorem \ref{thm:2.1}. We prove
that the limit $\Psi_U(t,u)$ of $\Psi_U^N(t,u)$ obtained in Proposition
\ref{prop:formulaPsi} satisfies the SPDE \eqref{eq:SPDE-1}.
\begin{proof}[Proof of Theorem \ref{thm:2.1}]
Fix a function $f \in C^{1,2}_0([0,T] \times \R_+^\circ)$ and consider the
process $\Psi_U(t,u)$ tested with $f$. Then, by the representation formula
\eqref{eqn:formulaPsi} combined with Lemma \ref{lem:Rho} and the
substitution $v= \tra (u)$, we get
\begin{align*}
\int_{\R_+^\circ} f(t,u) \Psi_U(t,u) du
& =\int_{\R_+^\circ} f(t,u) \bigl(1+\rho_U(t,u)\bigr) \bar{\Psi}_U\bigl(
t,\tra (u) \bigr) du \\
& =\int_{\R} f\bigl(t,\trai (v) \bigr) \bar{\Psi}_U(t,v) dv = \lan
\bar{\Psi}_U(t), f(t)
\circ \trai \ran.
\end{align*}
Since $f(t) \circ \trai  \in C^{1,2}_0([0,T] \times \R)$,
\eqref{eqn:spdePsiBar} rewrites the right hand side as
\begin{align*}
\lan \bar{\Psi}_{U,0},f(0) \circ \zeta_0 \ran + &\int_0^t \lan
\bar{\Psi}_U(s), (f(s)\circ \zeta_s)'' - \a \bigl((1-2\rho(s)) f(s)
\circ \zeta_s \bigr)' + \partial_s \bigl(f(s) \circ \zeta_s \bigr)
\ran ds\\
+& \int_0^t\int_{\R} f(s)\circ\zeta_s(v) \sqrt{2\rho(s, v)(1-\rho(s,v))}
W(dsdv).
\end{align*}
Thus, for the initial condition, we get analogue to the above
\[
\lan \bar{\Psi}_{U,0},f(0) \circ \zeta_0 \ran = \int_{\R_+^\circ} f(0,u)
\Psi_{U,0}(u) du.
\]
Let us consider the drift term.  The relation $\zeta_s'(v) = 1-\rho(s,v)$ implies
\begin{gather*}
\bigl(f(s,\zeta_s(v))\bigr)'' - \a \bigl((1-2\rho(s,v))
f(s,\zeta_s(v))\bigr)' = (1-\rho(s,v))^2 f''(s,\zeta_s(v))\qquad\qquad\\
\qquad - \Bigl(\rho'(s,v) + \a (1-2\rho(s,v)) (1-\rho(s,v)) \Bigr)
f'(s,\zeta_s(v)) + 2\a \rho'(s,v) f(s,\zeta_s(v)),
\end{gather*}
and by \eqref{eqn:burgers},
\[
\partial_s \bigl(f(s, \zeta_s(v) \bigr) = \partial_s f(s) \circ
\zeta_s(v)  -f'(s,  \zeta_s(v))  \bigl( \rho'(s,v) + \a  \rho(s,v)
(1-\rho(s,v))\bigr).
\]
These yield that the drift term is equal to
\begin{align*}
\int_0^t \big\lan \bar{\Psi}_U(s),   (1-\rho(s))^2 f''(s)\circ \zeta_s
&- \bigl(2 \rho'(s)  + \a (1-\rho(s))^2\bigr) f'(s)\circ \zeta_s \\
&\quad+ 2 \a \rho'(s) f(s) \circ \zeta_s + \partial_sf(s)\circ\zeta_s
\big\ran ds.
\end{align*}
With the substitution $u=\trai(v)$ and \eqref{eqn:formulaPsi} combined with
Lemma \ref{lem:Rho}, we come back to an expression in $\Psi_U(t)$:
\[
\int_0^t \int_{\R_+^\circ} \Psi_U(s,u) \Biggl(\Biggl(\frac{f'(s,u)-\a
f(s,u)} {(1+\rho_U(s,u))^2}\Biggr)' + \partial_s f(s,u)\Biggr)du ds.\]
The last task is to check the noise term. We consider the quadratic
variation of the in the above appearing stochastic integral, which is given
by
\begin{align*}
\int_0^t \int_{\R} f^2(s,\zeta_s(v)) 2\rho(s,v) (1- \rho(s,v)) dv ds
= &\int_0^t \int_{\R_+^\circ} f^2(s,u) 2\rho(s, \zeta_s^{-1}(u)) du ds \\
=& \int_0^t \int_{\R_+^\circ} f^2(s,u) \frac{2\rho_U(s,u)}{1+\rho_U(s,u)}
du ds.
\end{align*}
This proves \eqref{eq:3.13} with a suitably taken space-time white noise
$\dot{W}(t,u)$ on $[0,T]\times\R_+^\circ$ (which is different from that in
Proposition \ref{prop:ConvPsiBar}) as in Lemma \ref{lem:4.17} below.
\end{proof}
For the proof of Theorem \ref{thm:2.1}, the uniqueness of the solution to
the SPDE \eqref{eq:SPDE-1} in the limit was unnecessary.  Nevertheless, we
show that uniqueness holds under the condition \eqref{eq:LB} stated below.
\begin{lem} \label{lem:3.4}
The relation \eqref{eqn:formulaPsi} for $\bar{\Psi}_U(t)$ and $\Psi_U(t)$
translates to
\begin{equation}\label{equiv-norm-1}
\|\bar{\Psi}_U(t)\|_{L^2_r(\R)}^2 =\int_{\R_+^\circ} \Psi_U^2(t,u)
\frac{e^{-2r|u-\psi_U(t, u)|}}{1+\rho_U(t, u)} du.
\end{equation}
If in addition $\rho_U(t, u)$ satisfies the condition
\begin{equation} \label{eq:LB}
c := \inf_{t\in [0,T], u\in (0,1]} u \rho_U(t, u) >0,
\end{equation}
then for every $r>0$, there exists $C_r>0$ such that
\begin{equation} \label{eq:UB}
\|\bar{\Psi}_U(t)\|_{L^2_r(\R)} \le C_r \|\Psi_U(t)\|_{\tilde{L}^2_{r(c\a\wedge 1)}(\R_+^\circ)}.
\end{equation}
In particular, $\Psi_U(t)\in
\tilde{L}_e^2(\R_+^\circ)$ implies $\bar{\Psi}_U(t) \in L_e^2(\R)$ and then
the solution $\Psi_U$ of the SPDE \eqref{eq:SPDE-1} is unique in the class
$C([0,T],C(\R_+^\circ)) \cap C([0,T],\tilde{L}_e^2(\R_+^\circ))$.
\end{lem}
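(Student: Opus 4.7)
I would establish the three assertions in turn. For the identity \eqref{equiv-norm-1}, I would substitute $v = \tra(u)$ in $\|\bar{\Psi}_U(t)\|_{L_r^2(\R)}^2$. By Lemma \ref{lem:Rho}, $\trai'(v) = 1-\rho(t,v) = 1/(1+\rho_U(t,\trai(v)))$, so $dv = (1+\rho_U(t,u))\,du$; meanwhile \eqref{eqn:formulaPsi} gives $\bar{\Psi}_U(t,v)^2 = \Psi_U(t,u)^2/(1+\rho_U(t,u))^2$. The remaining geometric identity $|v| = |u-\psi_U(t,u)|$ for $u = \trai(v)$ follows from $\trai(v) - \psi_U(t,\trai(v)) = v$, which is the continuum counterpart of the formula $\tfrac1N\trai^N(x) = |x/N| + \pi_t^N([x/N,\infty)) - \tfrac1N\bar\eta_t^N(x)$ used in the proof of Proposition \ref{prop:formulaPsi}, and which rests on the asymmetry $\rho(t,v)+\rho(t,-v)=1$ of the density. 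Multiplying the three contributions together yields \eqref{equiv-norm-1}.

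For the bound \eqref{eq:UB}, by \eqref{equiv-norm-1} it suffices to establish the pointwise inequality
\[
\frac{e^{-2r|u-\psi_U(t,u)|}}{1+\rho_U(t,u)} \le C_r^2\, g_{r'}(u), \qquad r' := r(c\a\wedge 1),
\]
uniformly in $t \in [0,T]$. For $u \ge 2$, since $\psi_U(t,\cdot) \in X_U$ decreases to $0$ and depends continuously on $t$, we have $\psi_U(t,u) \le M$, so $|u-\psi_U(t,u)| \ge u-M$ and therefore $e^{-2r|u-\psi_U|} \le C e^{-2ru} \le C e^{-2r'u} = C g_{r'}(u)$, since $r' \le r$. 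On $[1,2]$ both sides are bounded away from $0$ and $\infty$. For $u \in (0,1]$, hypothesis \eqref{eq:LB} gives $\psi_U'(t,u) = -\rho_U(t,u) \le -c/u$; integrating from $u$ to $1$ yields $\psi_U(t,u) \ge c\log(1/u) - C$, and hence $e^{-2r|u-\psi_U|} \le C u^{2rc}$. Combined with $1/(1+\rho_U(t,u)) \le u/c$ this produces a bound of the form $C' u^{1+2rc}$. The choice $r' = r(c\a\wedge 1)$ now matches the exponents in $g_{r'}(u) = u^{1+2r'/\a}$: when $c\a \le 1$ one has $1+2r'/\a = 1+2rc$ exactly, while when $c\a > 1$ one has $1+2r'/\a = 1+2r/\a \le 1+2rc$, so $u^{1+2rc} \le u^{1+2r'/\a}$ on $(0,1]$.

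For uniqueness, let $\Psi_U^{(1)}$ and $\Psi_U^{(2)}$ be two solutions of \eqref{eq:SPDE-1} in the stated class and define $\bar{\Psi}_U^{(i)}(t,v) := \Psi_U^{(i)}(t,\trai(v))(1-\rho(t,v))$ for $i=1,2$. By \eqref{eq:UB} each $\bar{\Psi}_U^{(i)}$ lies in $C([0,T],C(\R)) \cap C([0,T],L_e^2(\R))$. Running the coordinate change from the proof of Theorem \ref{thm:2.1} backwards (test \eqref{eq:SPDE-1} against $f \circ \tra$ for arbitrary $f \in C_0^{1,2}([0,T]\times\R)$ and invoke \eqref{eqn:burgers} to re-express the drift) shows that each $\bar{\Psi}_U^{(i)}$ satisfies the SPDE \eqref{eq:SPDE-Prop} driven by the same white noise; the standard uniqueness for this linear SPDE in its mild form (as sketched below \eqref{eq:4.5}) forces $\bar{\Psi}_U^{(1)} = \bar{\Psi}_U^{(2)}$, and then \eqref{eqn:formulaPsi} transfers uniqueness back to $\Psi_U^{(1)} = \Psi_U^{(2)}$. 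The main obstacle is the small-$u$ case of Step 2: one must extract the precise logarithmic blow-up of $\psi_U(t,u)$ from the lower bound \eqref{eq:LB}, and the weight $g_r$ was in fact designed (cf.\ Remark \ref{rem:3.1}) so that the resulting polynomial rate $u^{1+2rc}$ matches the prescribed exponent $1+2r/\a$, the factor $c\a\wedge 1$ being exactly the reconciliation between these two powers.
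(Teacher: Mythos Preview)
Your proof is correct and follows essentially the same route as the paper's: the change of variables $v=\tra(u)$ together with $\zeta_t^{-1}(u)=u-\psi_U(t,u)$ for \eqref{equiv-norm-1}; the split into large and small $u$, with the logarithmic lower bound $\psi_U(t,u)\ge -c\log u - C$ extracted from \eqref{eq:LB} for \eqref{eq:UB}; and the transfer of uniqueness from \eqref{eq:SPDE-Prop} back through the transformation \eqref{eqn:formulaPsi}. Your explicit reconciliation of the exponents $1+2rc$ and $1+2r'/\a$ via the factor $c\a\wedge 1$ is a detail the paper leaves implicit.
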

\begin{proof}
By a change of variables, the left hand side of \eqref{equiv-norm-1} can be
rewritten as
\[
\int_{\R_+^\circ} \bar{\Psi}_U^2\bigl(t, \zeta_t^{-1}(u) \bigr)e^{-2r|
\zeta_t^{-1}(u)|} \bigl(\zeta_t^{-1}(u) \bigr)' du.
\]
It is easily seen that this integral is equal to the right hand side of
\eqref{equiv-norm-1} by applying Proposition \ref{prop:formulaPsi}, Lemma
\ref{lem:Rho} and recalling that $\trai^{-1} (u)=u -\psi_U(t, u)$ and
$(\trai^{-1} (u))'= 1+ \rho_U(t, u)$. This proves \eqref{equiv-norm-1}. To
show \eqref{eq:UB}, first note that $\psi_U(t) \in X_U$ behaves like
\[
 \frac{e^{-2r|u- \psi_U(t,u)|}}{1-\psi'_U(t,u)} \asymp e^{-2ru},
\]
for large enough $u$ uniformly in $t\in [0,T]$. On the other hand, condition
\eqref{eq:LB} implies $\rho_U(t, u) \geq \bar{\rho}_U(u) := cu^{-1}$ on
$(0, 1]$ and therefore
\[
u-\psi_U(t, u) \leq u-\bar{\psi}_U(u) \, (<0),
\]
near $0$, where $\bar{\psi}_U(u) = -c \log u$. This results in a behavior
like
\[
\frac{e^{-2r|u-\psi_U(t, u)|}}{1+\rho_U(t, u) } \leq
\frac{e^{-2r|u-\bar{\psi}_U(u)|}}{1+\bar{\rho}_U(u) }
=\frac{u^{2rc}e^{2ru}}{1+cu^{-1}} \asymp \frac1c u^{2rc+1},
\]
near $0$. Applying these estimates to \eqref{equiv-norm-1} yields
\eqref{eq:UB}. Finally, transform the solution $\Psi_U(t)$ of the SPDE
\eqref{eq:SPDE-1} in the class $C([0,T],C(\R_+^\circ)) \cap C([0,T],
\tilde{L}_e^2(\R_+^\circ))$ into $\bar{\Psi}_U(t)$ by
\eqref{eqn:formulaPsi}. By \eqref{eq:UB} $\bar{\Psi}_U(t)$ is a solution
of the SPDE \eqref{eq:SPDE-Prop} in the class $C([0,T],C(\R)) \cap
C([0,T],L_e^2(\R))$. Since $\bar{\Psi}_U(t)$ is uniquely determined in this
class, uniqueness for $\Psi_U(t)$ follows.
\end{proof}
In the final part of this section, we give an example of a class of initial
values $\rho_U(0,u)$ for which the condition \eqref{eq:LB} is satisfied
along the time evolution $\rho_U(t,u)$. We first prepare a comparison
theorem for solutions of PDE \eqref{eqn:burgers}.

\begin{lem}  \label{lem:comparisonWASEP}
If two initial values of \eqref{eqn:burgers} satisfy $\rho^{(1)}(0,v) \le
\rho^{(2)}(0,v), v\in \R$, then the corresponding solutions satisfy
$\rho^{(1)}(t,v) \le \rho^{(2)}(t,v)$ for every $t>0$ and $v \in \R$.
\end{lem}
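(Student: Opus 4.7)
The plan is to reduce the statement to the weak maximum principle for a linear parabolic equation with bounded coefficients. Set $w(t,v) := \rho^{(2)}(t,v) - \rho^{(1)}(t,v)$, so that $w(0,\cdot) \geq 0$ by hypothesis, and the goal is to show $w \geq 0$ throughout $[0,T]\times\R$.

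Subtracting the two instances of \eqref{eqn:burgers} and exploiting the algebraic identity
\[
\rho^{(2)}(1-\rho^{(2)}) - \rho^{(1)}(1-\rho^{(1)}) = w\bigl(1 - \rho^{(1)} - \rho^{(2)}\bigr),
\]
one finds that $w$ satisfies the linear parabolic equation
\[
\partial_t w = w'' + \alpha (b w)' = w'' + \alpha b\, w' + \alpha b'\, w,
\qquad b(t,v) := 1 - \rho^{(1)}(t,v) - \rho^{(2)}(t,v).
\]
Because both $\rho^{(i)}$ are classical solutions taking values in $[0,1]$, the coefficients $b$ and $b'$ are bounded on $[0,T]\times\R$, and $w$ itself is bounded between $-1$ and $1$. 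This puts us squarely in the classical setting of the weak maximum principle for bounded solutions of a linear parabolic equation on an unbounded domain.

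To carry this out I would introduce the barrier $w_\varepsilon(t,v) := e^{-Kt} w(t,v) + \varepsilon(1+t)\cosh(\mu v)$ with $K > \|\alpha b'\|_\infty$ and $\mu > 0$ sufficiently small. The shift by $K$ renders the zeroth-order coefficient in the equation for $e^{-Kt} w$ nonpositive, while $\cosh(\mu v)$ is a strict supersolution of the associated spatial operator for small enough $\mu$. Boundedness of $w$ ensures $w_\varepsilon \to +\infty$ as $|v| \to \infty$, so $w_\varepsilon$ attains its minimum at some $(t_0,v_0) \in [0,T]\times\R$; the standard sign inequalities $\partial_t w_\varepsilon \le 0$, $w_\varepsilon'' \ge 0$, $w_\varepsilon' = 0$ at an interior space-time minimum would contradict the resulting parabolic inequality, forcing $t_0 = 0$, where $w_\varepsilon(0,v_0) \geq \varepsilon\cosh(\mu v_0) > 0$. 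Letting $\varepsilon \downarrow 0$ then yields $w \geq 0$ on $[0,T]\times\R$.

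The main obstacle is precisely the unboundedness of the spatial domain: there is no lateral boundary on which to locate a minimum, and the a priori bound $|w|\le 1$ must be converted into an effective comparison by means of a weight that grows at infinity. As a cleaner alternative I would consider the Hopf--Cole route: the substitution $u := 1 - 2\rho$ turns \eqref{eqn:burgers} into the viscous Burgers equation $\partial_t u = u'' + \alpha u u'$, which in turn linearizes via $u = -\tfrac{2}{\alpha}(\log\phi)'$ to the heat equation for $\phi$, and monotonicity would then follow from positivity of the heat kernel. The direct maximum principle argument above is, however, self-contained and avoids the regularity questions that arise in passing through the Hopf--Cole substitution.
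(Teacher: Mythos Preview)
Your argument is correct: the difference $w=\rho^{(2)}-\rho^{(1)}$ does satisfy a linear parabolic equation with bounded coefficients (the boundedness of $b'=-(\rho^{(1)})'-(\rho^{(2)})'$ is standard parabolic regularity for classical solutions with data in $Y_U$), and the barrier $\varepsilon(1+t)\cosh(\mu v)$ with small $\mu$ handles the unbounded domain in the usual way.

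The paper, however, takes a quite different and much shorter route. It offers two one-line arguments: first, the Hopf--Cole transformation---which you mention as an alternative but set aside---and second, a probabilistic argument you do not consider at all, namely that the underlying weakly asymmetric simple exclusion process on $\Z$ is \emph{attractive}, so ordered initial configurations can be coupled to stay ordered, and passing to the hydrodynamic limit transfers the order to the solutions of \eqref{eqn:burgers}. Given that the Hopf--Cole transformation is the central analytic device both here and in \cite{FS}, and that the solution class is already tied to it, the ``regularity questions'' you worry about are in fact non-issues in this context: one simply writes $\rho=-\om'/(\b\om)$ (with the appropriate constant) and reads off monotonicity from positivity of the heat semigroup. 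Your direct maximum-principle approach is self-contained and would work equally well in situations where no linearizing transformation is available, but here it is considerably longer than either of the paper's proofs.
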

\begin{proof}
This is immediate by applying the Hopf-Cole transformation. Or since the
underlying microscopic system, the weakly asymmetric simple exclusion
process on $\Z$, is attractive, by passing to the hydrodynamic limit we see
the conclusion for the limit equation \eqref{eqn:burgers}.
\end{proof}
Let $\rho_U^\infty(u)$ and $\rho^\infty(v;C)$ be stationary solutions of
the PDEs \eqref{eq:3.14} and \eqref{eqn:burgers}, respectively, with
explicit formulas
\begin{equation}\label{eq:3.rho_infty}
\rho_U^\infty(u) := \frac1{e^{\a u}-1}
\quad \text{and} \quad
\rho^\infty(v;C) := \frac{C}{e^{\a v}+C},\  \text{for every}\ C>0.
\end{equation}
Note that $\rho^\infty(v; C)$ are shifts of $\rho^\infty(v;1)$ and further recall that $\rho(v)=\Phi_U(\psi_U)(v)$ is
defined in Lemma \ref{lem:3.2-b}.
\begin{lem}\label{lem:example1}
Assume that the derivative $\rho_U(u)=-\psi'_U(u)$ of
$\psi_U \in X_U$ satisfies
\begin{equation}\label{exm:ass1}
C_2\rho_U^\infty(u) \le \rho_U(u) \le C_1\rho_U^\infty(u),
\end{equation}
for some $C_1 \ge C_2>0$ and
\begin{equation}\label{exm:ass2}
\limsup_{u \downarrow 0} |\rho_U(u) - \rho_U^\infty(u)| < \infty.
\end{equation}
Then, there exist $\tilde{C}_1 \ge \tilde{C}_2 > 0$ such that
\begin{equation}\label{exm:ass3}
\rho^\infty(v;\tilde{C}_2) \le \rho(v) \le \rho^\infty(v;\tilde{C}_1).
\end{equation}
\end{lem}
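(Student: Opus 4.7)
The plan is to exploit the algebraic identity already derived in the proof of Lemma \ref{lem:3.2-b}, namely
\[
\rho(v) = \frac{\rho_U(u)}{1+\rho_U(u)}, \qquad v = G_{\psi_U}(u) = u - \psi_U(u), \quad u>0,
\]
and to track how the two-sided bound \eqref{exm:ass1} propagates through the nonlinear change of variables $u \mapsto v$. Since $x \mapsto x/(1+x)$ is monotone increasing and $\rho_U^\infty(u) = (e^{\alpha u}-1)^{-1}$, \eqref{exm:ass1} gives at once
\[
\frac{C_2}{e^{\alpha u}-1+C_2} \le \rho(v) \le \frac{C_1}{e^{\alpha u}-1+C_1}.
\]
Hence the only real task is to compare $e^{\alpha u}-1$ with $e^{\alpha v}$ up to multiplicative constants uniform in $u>0$.

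For this I would first establish a uniform estimate $\sup_{u>0}|\psi_U(u) - \psi_U^\infty(u)| \le K$ for some finite $K$. Integrating $-\psi_U'(s)=\rho_U(s)$ and $-(\psi_U^\infty)'(s)=\rho_U^\infty(s)$ from $u$ to $\infty$ (using $\psi_U(\infty)=\psi_U^\infty(\infty)=0$, which is built into $X_U$) yields
\[
\psi_U(u) - \psi_U^\infty(u) = \int_u^\infty \bigl(\rho_U(s)-\rho_U^\infty(s)\bigr)\,ds.
\]
On $(0,\delta]$ the integrand is bounded by \eqref{exm:ass2}, and on $[\delta,\infty)$ it is dominated by $(C_1+1)\rho_U^\infty(s)$ by \eqref{exm:ass1}, which is integrable there because $\rho_U^\infty$ decays exponentially. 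Splitting the integral at $\delta$ and taking the supremum over $u>0$ produces the desired $K$. Since $e^{-\alpha\psi_U^\infty(u)}=1-e^{-\alpha u}$, this uniform bound translates into
\[
e^{-\alpha K}(e^{\alpha u}-1) \le e^{\alpha v} = e^{\alpha u}\, e^{-\alpha\psi_U(u)} \le e^{\alpha K}(e^{\alpha u}-1), \qquad u>0.
\]

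Plugging these two-sided estimates back into the bound on $\rho(v)$ and setting $\tilde{C}_1 = C_1 e^{\alpha K}$ and $\tilde{C}_2 = C_2 e^{-\alpha K}$, a short algebraic manipulation (multiplying numerator and denominator by $e^{\pm\alpha K}$) gives
\[
\rho^\infty(v;\tilde{C}_2) = \frac{\tilde{C}_2}{e^{\alpha v}+\tilde{C}_2} \le \rho(v) \le \frac{\tilde{C}_1}{e^{\alpha v}+\tilde{C}_1} = \rho^\infty(v;\tilde{C}_1),
\]
which is \eqref{exm:ass3}. The inequality $\tilde{C}_1 \ge \tilde{C}_2$ is automatic from $C_1 \ge C_2$ and $K \ge 0$. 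The main obstacle is the uniform control of $\psi_U-\psi_U^\infty$: because $\rho_U^\infty$ fails to be integrable at the origin, \eqref{exm:ass1} alone cannot prevent the difference of the heights from blowing up as $u \downarrow 0$. This is precisely the role of the additional assumption \eqref{exm:ass2}, which tames the integrand near the singularity and is what makes the argument work.
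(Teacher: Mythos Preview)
Your argument is correct and follows essentially the same route as the paper: both reduce \eqref{exm:ass3} to a two-sided comparison of $e^{\alpha u}-1$ with $e^{\alpha v}$ along the change of variables $v=G_{\psi_U}(u)$, and both obtain that comparison by controlling $\psi_U$ relative to $\psi_U^\infty$ using \eqref{exm:ass2} near the origin and \eqref{exm:ass1} away from it. Your packaging via the single uniform additive bound $\sup_{u>0}|\psi_U(u)-\psi_U^\infty(u)|\le K$ is slightly tidier than the paper's explicit case split at $u=1$, but the content is the same.
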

\begin{proof}
Recall the definitions of $\Phi_U$ and $G_{\psi}$ given in the proof of
Lemma \ref{lem:3.2-b} and note that what we only need to prove
\[
C_2' e^{-\a v} \le \rho_U((G_{\psi})^{-1}(v)) \le C_1' e^{-\a v},
\]
for some $C_1' \ge C_2' > 0$. Under condition (\ref{exm:ass1}),
we can  reduce this  to show that there exist $D_1 \ge D_2>0$ such that
\[
D_2 e^{\a v} \le e^ { \a (G_{\psi})^{-1}(v)}-1 \le D_1 e^{\a v}.
\]
The condition (\ref{exm:ass2}) implies $A=\sup_{0<u \le 1}|\rho_U(u) -
\rho_U^\infty(u)|<\infty$ and therefore
\[
\psi_U(u)= \int^1_u \rho_U(u')du' +\psi_U(1) \le -\frac{1}{\a} \log(1-
e^{-\a u}) + A+\psi_U(1), \ \text{for any}\  0 < u \le 1.
\]
Similarly, for any $0< u \le 1$, $\psi_U(u) \ge -\frac{1}{\a} \log(1-
e^{-\a u}) - A +\psi_U(1)$.
Denote $A+\psi_U(1)= \tilde{C}_1$ and $-A+\psi_U(1)=\tilde{C}_2$. Then
\[
{\a} u+  \log(1-e^{-\a u}) -{\a} \tilde{C}_1 \le {\a} G_{\psi}(u)
 \le {\a} u +  \log(1-e^{-\a u}) - {\a}\tilde{C}_2
\]
and by taking the exponential, we obtain that for any $v$ satisfying
$(G_{\psi})^{-1} (v) \le 1$,
\[
\bigl( e^{\a (G_{\psi})^{-1} (v) }- 1 \bigr) e^{- \a \tilde{C}_1} \le e^
{\a v} \le \bigl( e^{\a (G_{\psi})^{-1} (v)} - 1 \bigr) e^{- \a
\tilde{C}_2}.
\]
For $v$ satisfying $(G_{\psi})^{-1} (v) \ge 1$, we apply the same argument
as above with $\psi_U(u)= \int^{\infty}_u \rho_U(u')du'$ to obtain the
inequality
\[
e^{\a (G_{\psi})^{-1} (v)} \bigl( 1- e^{- \a (G_{\psi})^{-1} (v) }\bigr)^C
\le e^ {\a v} \le e^{\a (G_{\psi})^{-1} (v) }\bigl( 1- e^{- \a
(G_{\psi})^{-1} (v)} \bigr)^{1/C},
\]
where $C= \max\{C_1, 1/C_2\} \ge 1$.
Then, it is obvious that $\bigl(1- e^{- \a (G_{\psi})^{-1} (v)
}\bigr)^{C-1} \ge (1- e^{- \a})^{C-1}$ and $\bigl( 1- e^{- \a
(G_{\psi})^{-1} (v)} \bigr)^{1/C -1 } \le (1- e^{- \a})^{1/C -1 }$ for
$v$ satisfying $(G_{\psi})^{-1} (v) \ge 1$. In the end, for any $v \in \R$,
we have
\[
\min \bigl\{e^{- \a \tilde{C}_1}, (1- e^{- \a})^{C-1} \bigr\}  \le
{e^ {\a v}}\bigl( e^{\a (G_{\psi})^{-1} (v) }- 1  \bigr)^{-1} \le
\max \bigl\{e^{- \a \tilde{C}_2}, (1- e^{- \a})^{1/C-1} \bigr\},
\]
which concludes the proof.
\end{proof}
\begin{lem}\label{lem:example2}
Assume that $\rho(\cdot) \in Y_U$ satisfies the condition \eqref{exm:ass3}
for some $C_1 \ge C_2 > 0$ in place of $\tilde{C}_1 \ge \tilde{C}_2>0$.
Then, $\rho_U(u):=- (\Psi_U(\rho(\cdot))(u))'$ satisfies
\[
\frac{C_2}{C_1} \rho_U^\infty(u)\le \rho_U(u) \le \frac{C_1}{C_2}
\rho_U^\infty(u),
\]
where $\Psi_U:Y_U\to X_U$ is the inverse map of $\Phi_U$, see Proposition
4.4 of \cite{FS}.
\end{lem}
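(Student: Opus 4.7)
The plan is to exploit the pointwise relation between $\rho_U$ and $\rho$ that is built into the map $\Psi_U$. Writing $\psi_U = \Psi_U(\rho)$ explicitly as $\psi_U(u) = \zeta^+_\rho\bigl((\zeta^-_\rho)^{-1}(u)\bigr)$ and differentiating (the same computation as in Lemma \ref{lem:Rho}, now in the stationary setting), one finds
\[
\rho_U(u) = \frac{\rho(v)}{1-\rho(v)}, \qquad u = \zeta(v),\ \zeta(v) := \int_{-\infty}^v (1-\rho(w))\,dw.
\]
The remainder of the argument is a direct comparison against the explicit stationary profiles $\rho^\infty(\cdot;C_i)$.

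For $i=1,2$, set $\zeta_i(v) := \int_{-\infty}^v (1-\rho^\infty(w;C_i))\,dw$. Using $1-\rho^\infty(w;C_i)= e^{\a w}/(e^{\a w}+C_i)$, a direct integration gives
\[
\zeta_i(v) = \tfrac{1}{\a}\log\bigl(1+ e^{\a v}/C_i\bigr), \qquad \zeta_i^{-1}(u) = \tfrac{1}{\a}\log\bigl(C_i(e^{\a u}-1)\bigr),
\]
together with the handy identity $\rho^\infty(v;C_i)/(1-\rho^\infty(v;C_i)) = C_i e^{-\a v}$. Integrating the hypothesis $\rho^\infty(\cdot;C_2)\le \rho \le \rho^\infty(\cdot;C_1)$ over $(-\infty,v]$ yields $\zeta_1(v)\le \zeta(v)\le \zeta_2(v)$, and monotonicity of each $\zeta_i$ translates this, at fixed $u$ with $v=\zeta^{-1}(u)$, into
\[
\zeta_2^{-1}(u)\le v\le \zeta_1^{-1}(u).
\]

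The upper bound now follows from the fact that $r\mapsto r/(1-r)$ is increasing on $[0,1)$ and $e^{\a v}$ is increasing:
\[
\rho_U(u) = \frac{\rho(v)}{1-\rho(v)} \le \frac{\rho^\infty(v;C_1)}{1-\rho^\infty(v;C_1)} = \frac{C_1}{e^{\a v}} \le \frac{C_1}{e^{\a \zeta_2^{-1}(u)}} = \frac{C_1}{C_2(e^{\a u}-1)} = \frac{C_1}{C_2}\rho_U^\infty(u),
\]
and the lower bound is obtained by the symmetric argument, combining $\rho(v)\ge \rho^\infty(v;C_2)$ with $v\le \zeta_1^{-1}(u)$.

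The only real subtlety is a bookkeeping point: the bound on $\rho$ must be evaluated at a point $v=\zeta^{-1}(u)$ that itself depends on $\rho$, and it is essential that the indices on $\zeta_i^{-1}$ get crossed with those on $\rho^\infty(\cdot;C_i)$, so that the final constants are the ratios $C_1/C_2$ and $C_2/C_1$ rather than $1$. Once the explicit formulas for $\zeta_i^{-1}$ are in hand the remaining calculation is routine, and monotonicity of $\rho$ itself is not needed.
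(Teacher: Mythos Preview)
Your proof is correct and follows essentially the same route as the paper's: both arguments use the identity $\rho_U(u)=\rho(v)/(1-\rho(v))$ with $v=(\zeta^-_\rho)^{-1}(u)$, bound this above and below by $C_i e^{-\a v}$ using the hypothesis on $\rho$, and then sandwich $e^{-\a v}$ by comparing $\zeta^-_\rho$ with the explicit integrals $\zeta_i$ computed from $\rho^\infty(\cdot;C_i)$. Your presentation is slightly more explicit in naming $\zeta_i$ and writing down $\zeta_i^{-1}(u)=\tfrac1\a\log\bigl(C_i(e^{\a u}-1)\bigr)$, but the logic and the resulting index-crossing that produces the ratios $C_1/C_2$ and $C_2/C_1$ are identical to the paper's.
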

\begin{proof}
By definition $\rho_U(u)= \bigl(1-\rho((\zeta^{-}_{\rho})^{-1}
(u))\bigr)^{-1} -1$ and
\[
\bigl(C_1 e^{-\a (\zeta^{-}_{\rho})^{-1}(u)}+1 \bigr)^{-1} \le 1-
\rho((\zeta^{-}_{\rho})^{-1}(u)) \le \bigl(C_2 e^{-\a
(\zeta^{-}_{\rho})^{-1}(u)}+1 \bigr)^{-1}
\]
holds by assumption. Then, it is easy to see that
\[
C_2 e^{-\a (\zeta^{-}_{\rho})^{-1}(u)} \le \rho_U(u) \le C_1 e^{-\a
(\zeta^{-}_{\rho})^{-1}(u)}.
\]
On the other hand, since
$\zeta^{-}_{\rho}(v)=\int^v_{\infty}(1-\rho(w))dw$
and  $\int^v_{\infty}\frac{1}{C e^{-\a w}+1} dw=\frac{1}{\a} \log
(\frac{C+e^ {\a v}}{C})$,
\[
\frac{1}{\a} \log \left(\frac{C_1+e^ {\a (\zeta^{-}_{\rho})^{-1}(u)}}
{C_1}\right) \le u  \le \frac{1}{\a} \log \left(\frac{C_2+e^ {\a
(\zeta^{-}_{\rho})^{-1}(u)}}{C_2}\right)
\]
holds. Thus, we obtain
\[
{C_1}^{-1}\rho_U^\infty(u) \le e^{-\a (\zeta^{-}_{\rho})^{-1}(u)} \le
{C_2}^{-1}\rho_U^\infty(u),
\]
which concludes the proof.
\end{proof}
\begin{prop}\label{porp:example}
Assume that the derivative $\rho_U(0,u)=-\psi_U'(0,u)$ of
$\psi_U(0,\cdot) \in X_U$ satisfies two conditions \eqref{exm:ass1} and
\eqref{exm:ass2} in Lemma \ref{lem:example1} with $\rho_U(u)$ replaced by
$\rho_U(0,u)$.  Then, there exist constants $\tilde{C}_1>\tilde{C}_2 >0$
such that for any $t>0$, the solution $\rho_U(t,u)$ of the PDE
\eqref{eq:3.14} below satisfies
\begin{equation}\label{eq:3.19}
\tilde{C}_1 \rho_U^\infty(u) \le \rho_U(t,u) \le \tilde{C}_2
\rho_U^\infty(u).
\end{equation}
In particular the lower bound in \eqref{eq:3.19} implies the condition
\eqref{eq:LB}.
\end{prop}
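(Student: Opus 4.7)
The plan is to transport the bounds from the $u$-side to the $v$-side using Lemma \ref{lem:example1}, propagate them in time via the comparison principle of Lemma \ref{lem:comparisonWASEP} applied with the explicit stationary profiles $\rho^\infty(v;C)$, and then transport them back to the $u$-side using Lemma \ref{lem:example2}, finally deducing \eqref{eq:LB} from the resulting lower bound.

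First, applying Lemma \ref{lem:example1} to the initial data $\psi_U(0,\cdot)\in X_U$ yields constants $C_1\ge C_2>0$ such that
\[
\rho^\infty(v;C_2) \le \rho(0,v) \le \rho^\infty(v;C_1), \quad v\in\R,
\]
where $\rho(0,\cdot)=\Phi_U(\psi_U(0,\cdot))$ serves as initial condition for \eqref{eqn:burgers}. A direct calculation (using $\rho^\infty{}'(v;C) = -\a\,\rho^\infty(1-\rho^\infty)$) shows that each $\rho^\infty(\cdot;C)$ is a stationary solution of \eqref{eqn:burgers}, so I may regard the two constant-in-time profiles $\rho^\infty(\cdot;C_1)$ and $\rho^\infty(\cdot;C_2)$ as solutions of \eqref{eqn:burgers} with themselves as initial data.

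Invoking the comparison theorem Lemma \ref{lem:comparisonWASEP} twice — once with $\rho^{(1)}=\rho^\infty(\cdot;C_2),\,\rho^{(2)}=\rho$ and once with $\rho^{(1)}=\rho,\,\rho^{(2)}=\rho^\infty(\cdot;C_1)$ — gives
\[
\rho^\infty(v;C_2) \le \rho(t,v) \le \rho^\infty(v;C_1), \quad t\ge 0,\ v\in\R.
\]
Since $\rho(t,\cdot)\in Y_U$ for each $t\ge 0$, I then apply Lemma \ref{lem:example2} at each time slice to obtain
\[
\frac{C_2}{C_1}\,\rho_U^\infty(u) \le \rho_U(t,u) \le \frac{C_1}{C_2}\,\rho_U^\infty(u), \quad u\in\R_+^\circ,\ t\ge 0,
\]
which is the conclusion \eqref{eq:3.19} with $\tilde{C}_2:=C_2/C_1$ and $\tilde{C}_1:=C_1/C_2$.

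For the final claim, from the lower bound and $\rho_U^\infty(u)=1/(e^{\a u}-1)$ I get
\[
u\,\rho_U(t,u) \ge \tilde{C}_2\,\frac{u}{e^{\a u}-1}, \quad u\in (0,1],\ t\in[0,T].
\]
The function $u\mapsto u/(e^{\a u}-1)$ is continuous on $(0,1]$ with $\lim_{u\downarrow 0} u/(e^{\a u}-1)=1/\a>0$, hence bounded below by a positive constant on $(0,1]$. This yields $c=\inf_{t\in[0,T],\,u\in(0,1]} u\,\rho_U(t,u)>0$, i.e. \eqref{eq:LB}. The only mildly delicate point is checking that Lemma \ref{lem:comparisonWASEP} applies to the non-$L^1$ profiles $\rho^\infty(\cdot;C)$ on the whole line, but since these solve \eqref{eqn:burgers} classically and are stationary, the comparison principle (proved in \cite{FS} via Hopf--Cole, and available whenever two classical solutions exist) suffices for the sandwich above.
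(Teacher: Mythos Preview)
Your proof is correct and follows exactly the strategy of the paper: apply Lemma \ref{lem:example1} at $t=0$, propagate the resulting sandwich in $v$ through time via Lemma \ref{lem:comparisonWASEP} using that each $\rho^\infty(\cdot;C)$ is stationary for \eqref{eqn:burgers}, and then translate back via Lemma \ref{lem:example2}. You have merely been more explicit than the paper (which compresses all this into one sentence), in particular spelling out the verification of \eqref{eq:LB} from the lower bound and flagging the scope of the comparison principle; note also that the inequality \eqref{eq:3.19} as printed has the roles of $\tilde{C}_1$ and $\tilde{C}_2$ reversed, which your labeling silently corrects.
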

\begin{proof}
First, note that the function $\rho^\infty(v;C)$ is a stationary solution
of
the PDE \eqref{eqn:burgers} for any $C>0$. Then, with Lemma
\ref{lem:comparisonWASEP}, the conclusion follows from Lemmas
\ref{lem:example1}
and \ref{lem:example2}.
\end{proof}
\begin{rem} \label{rem:3.1}
Under the equilibrium situation, that is, for
$\rho_U^\infty(u) = \lim_{t\to\infty} \rho_U(t,u)$,
$\zeta^{-1}(u) = \lim_{t\to\infty} \zeta^{-1}(t,u)$, $u\in \R_+^\circ$
and
$\rho^\infty(v;1) = \lim_{t\to\infty} \rho(t,v)$,
$\zeta(v) = \lim_{t\to\infty} \zeta(t,v)$, $v\in \R$,
we have explicit formulas:
\[
\zeta^{-1}(u) = \frac1{\a} \log( e^{\a u} -1) \quad \text{and} \quad
\zeta(v) = \frac1{\a} \log( e^{\a v} +1).
\]
From this, we see that the norm $|\bar{\Psi}|_{L_r^2(\R)}$ is equivalent to
$|\Psi|_{\tilde{L}_r^2(\R_+^\circ)}$, if $\bar{\Psi}$ and $\Psi$ are
related
with each other by the relation stated in Proposition \ref{prop:formulaPsi}:
$\Psi(u) = \bar{\Psi}(\zeta^{-1}(u))/(1-\rho(\zeta^{-1}(u)))$.  This explains
the reason for considering the norm $|\Psi|_{\tilde{L}_r^2(\R_+^\circ)}$.
\end{rem}
\begin{rem}
Similarly to Lemma \ref{lem:comparisonWASEP}, the attractiveness of the
underlying weakly asymmetric zero-range process with stochastic reservoir
leads to a comparison theorem for $\rho_U(t,u)$.  More precisely, the
function $\rho_U(t,u) =
- \psi_U'(t,u)$, defined from a solution $\psi_U(t,u)$ of the PDE in
the statement (1) of Section 2, solves the nonlinear PDE:
\begin{equation} \label{eq:3.14}
\partial_t\rho_U = \Bigl(\frac{\rho_U}{1+\rho_U}\Bigr)''
 + \a \Bigl(\frac{\rho_U}{1+\rho_U}\Bigr)',  \quad u\in \R_+^\circ.
\end{equation}

If two initial values of \eqref{eq:3.14} satisfy
$0<\rho_U^{(1)}(0,u) \le \rho_U^{(2)}(0,u), u\in \R_+^\circ$, then the
corresponding solutions satisfy
$0<\rho_U^{(1)}(t,u) \le \rho_U^{(2)}(t,u)$ for every $t>0$ and
$u\in \R_+^\circ$.
\end{rem}


\section{Proof of Theorem \ref{thm:2.2}}\label{section-4}

Let $q_t := q_t^\e=(q_i(t))_{i\in\N}$ be the Markov process on
$\mathcal{Q}$ introduced in Section 2 and
let $\eta_t = (\eta_t(x))_{x\in\N}\in \{0,1\}^\N$ be the height
differences of the height function $\psi_{q_t}$
determined from $q_t$.  The process $\eta_t$ is also defined by $\eta_t(x)
= \sharp\{i; q_i(t)=x\}$,
and set $\eta_t(0)=\infty$ for convenience.  As shown in Section 5.1 of
\cite{FS},
the process $\eta_t$ is a weakly asymmetric simple exclusion process on $\N$
with a weakly
asymmetric stochastic reservoir at $\{0\}$ and its generator is given at p.\
353 in \cite{FS}.
 Here again, we apply the Hopf-Cole transformation for $\eta_t$ at the
microscopic level.

Section \ref{subsection4.1} essentially reduces the proof of Theorem
\ref{thm:2.2} to a fluctuation
result for a process  on the whole lattice $\Z$, which is related to
the Hopf-Cole transformed process $\zeta_t^N$ and is introduced mainly to
avoid the boundary condition at $\{0\}$ by
a simple  transformation, see Proposition \ref{prop:4.4}. The proof of
Theorem \ref{thm:2.2} is formulated in
Section \ref{subsection4.1} based on  Proposition \ref{prop:4.4}, whose
proof is given  in Section \ref{subsection4.3}.
\subsection{Fluctuations for the Hopf-Cole Transformed Process}\label{subsection4.1}
Let $\eta_t^N = (\eta_t^N(x))_{x\in\N} := (\eta_{N^2t}(x))_{x\in\N}$ be
the weakly asymmetric
simple exclusion process speeded up by the factor $N^2$ in time with a
stochastic reservoir at $\{0\}$
and consider its microscopic Hopf-Cole transformation $\zeta_t^N =
(\zeta_t^N(x))_{x\in\N}$ defined by
\[
\zeta_t^N(x):=\exp \Bigl\{- (\log \e) \sum_{y=x}^{\infty}\eta_t^N(y)
\Bigr\}, \quad \e=\e_R(N).
\]
Its interpolation $\tilde{\zeta}^N(t,u), u\in \R_+$ with the proper
scaling in space is given by
\begin{equation} \label{eq:4.inter}
\tilde{\zeta}^N(t,u):=\exp \Bigl\{- (\log \e)
\Bigl(\sum_{y=[Nu]+1}^{\infty}\eta_t^N(y)+ 1_{\{u\ge 1/N\}}
([Nu]+1-Nu)\eta_t^N([Nu])\Bigr) \Bigr\}.
\end{equation}
It is clear that for each $t\geq 0$, $\tilde{\zeta}^N(t,
\cdot)$ is a $C(\R_+)$-valued process.
Theorem 5.2 of \cite{FS} states that, if the scaled empirical measure
$\pi_0^N$ of $\eta_0^N$ converges
to $\rho_0(v)dv$ in probability as $N\to\infty$ with $\rho_0(v)$
satisfying $\rho_0\in C(\R_+,[0,1])$
and $\int_0^\infty \rho_0(v)dv<\infty$, then $\tilde{\zeta}^N(t,u)$
converges to $\om(t,u)$
in probability, which is a unique bounded classical solution of the
following linear diffusion equation:
\begin{equation} \label{eq:3.1}
\left\{
\begin{aligned}
\partial_t\om & = \om'' + \b \, \om',  \quad u\in \R_+,  \\
\om(0,u) & = \exp \{ \b \int_{u}^{\infty} \rho_0(v)dv \},  \quad u\in
\R_+,  \\
2 \om'(t,0) & + \b \om(t,0) =0, \ \mbox{and}\quad  \om(t,\infty) =1,
\quad t>0.
\end{aligned}
\right.
\end{equation}
Instead of immediately considering the fluctuations of
$\tilde{\zeta}^N(t,u)$ around its limit,
the goal is to avoid the mixed boundary condition above.
Therefore the next paragraph reduces the problem to another asymptotic
problem on $\Z$, formulated in Proposition \ref{prop:4.4} below.

At first recall from Section 5.3.3 of \cite{FS} that $\zeta_t^N =
(\zeta_t^N(x))_{x\in\N}$ satisfies the stochastic differential equation
(SDE):
\[
d\zeta_t^N(x) = N^2 \bigl( \e \zeta_t^N(x-1)-(\e+1)\zeta_t^N(x) +
\zeta_t^N(x+1) \bigr)dt + dM_t^N(x),\ x \in \N,
\]
where $\zeta_t^N(0) := \e^{-1}\zeta_t^N(2)$ and $(M_t^N(x))_{x\in\N}$ are
martingales  with quadratic variations and covariations given as follows:
\begin{equation} \label{eq:4.SDE-1-1}
\begin{split}
& \frac{d}{dt} \lan M^N(x)\ran_t = \zeta_t^N(x)^2 \left\{ a_N c_+(x-1,
\eta_t^N) + b_N c_-(x-1, \eta_t^N)\right\},\   x \ge 2,  \\
& \frac{d}{dt} \lan M^N(1)\ran_t = \zeta_t^N(1)^2 \left\{ a_N
1_{\{\eta_t^N(1)=0\}} + b_N 1_{\{\eta_t^N(1)=1\}} \right\},\\
& \lan M^N(x), M^N(y)\ran_t =0, \quad 1 \le x \not= y.
\end{split}
\end{equation}
Here $ c_+(x, \eta)=1_{\{\eta(x)=1, \eta(x+1)=0\}}$, $ c_-(x,
\eta)=1_{\{\eta(x)=0, \eta(x+1)=1\}}$,
$a_N=N^2(1-\e)^2/\e$ and $b_N=N^2(1-\e)^2$.  Note that
$\lim_{N\to\infty}a_N = \lim_{N\to\infty}b_N = \b^2$
and, in Lemma 5.6 of \cite{FS}, $c_{\pm}(x-1, \eta_t^N)$ are reversed.

Instead of dealing with the boundary condition $\zeta_t^N(0) =
\e^{-1}\zeta_t^N(2)$ for $x=0$, a simple transformation for $\zeta_t^N$ and
its extension to $\Z$ makes the analysis easier.
\begin{lem} \label{lem:4.3}
Let us consider the process $\bar{\zeta}_t^N
=(\bar{\zeta}_t^N(x))_{x\in\Z}$ defined by
\[
\bar{\zeta}_t^N(x) = \exp\left\{ -(\log\e)x/2 \right\} \zeta_t^N(x)
\]
for $x\ge 1$ and $\bar{\zeta}_t^N(x) = \bar{\zeta}_t^N(2-x)$ for $x\le
0$. Then, $\bar{\zeta}_t^N(x)$ satisfies the SDE:
\begin{equation} \label{eq:SDE-4.2}
d \bar{\zeta}_t^N(x) = N^2 \e^{1/2} \De \bar{\zeta}_t^N(x) dt + N^2
\bigl(2\e^{1/2}- (\e+1)\bigr)\bar{\zeta}_t^N(x) dt + d\bar{M}_t^N(x),
\end{equation}
on the whole lattice space $\Z$, where $\bar{M}_t^N(x)
=e^{-(\log\e)x/2}M_t^N(x)$ for $x\ge 1$, $\bar{M}_t^N(x) =\bar{M}_t^N(2-
x)$ for $x\le 0$ and $\De\zeta(x) = \zeta(x-1) -2\zeta(x) +\zeta(x+1)$
for $x\in\Z$.
\end{lem}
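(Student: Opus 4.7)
The proof is a direct stochastic calculus computation, decomposed according to whether $x$ lies in the bulk ($x\ge 2$), at the boundary ($x=1$), or in the reflected region ($x\le 0$).

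The plan for $x\ge 2$ is to apply It\^o's rule (or just linearity, since the transformation is deterministic in $x$) to $\bar{\zeta}_t^N(x)=\e^{-x/2}\zeta_t^N(x)$. Multiplying the bulk SDE by $\e^{-x/2}$ and rewriting
\[
\e^{-x/2}\cdot\e\,\zeta_t^N(x-1)=\e^{1/2}\bar{\zeta}_t^N(x-1),\qquad
\e^{-x/2}\zeta_t^N(x+1)=\e^{1/2}\bar{\zeta}_t^N(x+1),
\]
turns the asymmetric hopping $\e\zeta(x-1)+\zeta(x+1)$ into the symmetric combination $\e^{1/2}(\bar{\zeta}(x-1)+\bar{\zeta}(x+1))$. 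Completing the square by adding and subtracting $2\e^{1/2}\bar{\zeta}_t^N(x)$ produces exactly $N^2\e^{1/2}\De\bar{\zeta}_t^N(x)dt + N^2(2\e^{1/2}-(\e+1))\bar{\zeta}_t^N(x)dt$, which matches \eqref{eq:SDE-4.2}. The martingale part is renamed by definition.

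For $x=1$, the plan is to exploit the ghost value $\zeta_t^N(0)=\e^{-1}\zeta_t^N(2)$: substituting this into the SDE for $\zeta_t^N(1)$ yields $d\zeta_t^N(1)=N^2(2\zeta_t^N(2)-(\e+1)\zeta_t^N(1))dt + dM_t^N(1)$, and multiplying by $\e^{-1/2}$ gives $d\bar{\zeta}_t^N(1)=N^2(2\e^{1/2}\bar{\zeta}_t^N(2)-(\e+1)\bar{\zeta}_t^N(1))dt+d\bar{M}_t^N(1)$. I would then verify this equals the right-hand side of \eqref{eq:SDE-4.2} at $x=1$ by using $\bar{\zeta}_t^N(0)=\bar{\zeta}_t^N(2)$ (from the reflection rule), which makes $\De\bar{\zeta}_t^N(1)=2\bar{\zeta}_t^N(2)-2\bar{\zeta}_t^N(1)$, and then the two expressions coincide after collecting terms in $\bar{\zeta}_t^N(1)$ and $\bar{\zeta}_t^N(2)$. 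This step is the main obstacle, since everything hinges on the algebraic miracle that the factor $\e\cdot\e^{-1}=1$ at the reservoir exactly matches the factor $\e^{1/2}\cdot 2$ generated by the symmetric Laplacian under reflection, up to the uniform mass term.

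For $x\le 0$, the plan is to notice that the reflection $\bar{\zeta}_t^N(x)=\bar{\zeta}_t^N(2-x)$ extends to a global symmetry on $\Z$, so it suffices to check that the SDE \eqref{eq:SDE-4.2} is invariant under $x\mapsto 2-x$. The discrete Laplacian obviously satisfies $\De\bar{\zeta}_t^N(x)=\De\bar{\zeta}_t^N(2-x)$, the mass term is pointwise symmetric, and $\bar{M}_t^N(x)=\bar{M}_t^N(2-x)$ by definition; so the equation for $x\le 0$ is merely a relabelling of the equation at the reflected site, which has already been established. Combining the three cases yields the stated SDE on all of $\Z$.
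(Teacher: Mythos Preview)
Your proof is correct; the case-by-case computation you outline (bulk $x\ge 2$, boundary $x=1$ using $\zeta_t^N(0)=\e^{-1}\zeta_t^N(2)$ together with the reflection $\bar{\zeta}_t^N(0)=\bar{\zeta}_t^N(2)$, and then the symmetry $x\mapsto 2-x$ for $x\le 0$) is exactly the natural verification. The paper itself does not give a proof: it simply states that the proof is straightforward and omitted, so your write-up in fact supplies what the paper leaves to the reader.
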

The proof of this lemma is straightforward and omitted.
The above transformation motivates the corresponding one for $\om(t,u)$, the
solution of  \eqref{eq:3.1}. In view of the scaling in
$\e $, it is natural to set $\bar{\om}(t, u):= e^{\b |u|/2}\om(t,|u|)$ and
then, parallel to  \eqref{eq:SDE-4.2},
to introduce its discretized  equations with initial values $\bar{\om}_0^N(x)= e^{-(\log \e) |x|/2} \om \bigl(0, \left|\tfrac{x}
{N}\right|\bigr)$:
\begin{equation}\label{eq:4.24}
d\bar{\om}_t^N(x)=N^2\e^{1/2}\De\bar{\om}_t^N(x)dt+
N^2\bigl(2\e^{1/2}-(\e
+1)\bigr)\bar{\om}_t^N(x) dt,\quad x\in \mathbb{Z}.\\
\end{equation}
It is known that the linear interpolation $\bar{\om}^N(t,u)$ of
$\bigl(\bar{\om}_t^N(x)\bigr)_{x\in \Z}$ converges to $\bar{\om}(t, u)$, see p.\ 214  in \cite{CY} or \cite{RM}.
More precisely, we have that
\begin{equation}\label{Pro4.3-Proof-2}
\lim_{N \to \infty}\sup_{t\in [0, T]}\sup_{u\in
[-K,K]}\sqrt{N}|\bar{\om}^N(t,u) -\bar{\om}(t,u)|=0.
\end{equation}
\begin{lem}\label{lem-barphi}
The process $\bar{\Phi}_t^N(x)$ defined by
\begin{equation}\label{eq:4.19}
\bar{\Phi}_t^N(x):=\sqrt{N}\left(\bar{\zeta}_t^N(x)-\bar{\om}_t^N(x) \right), \ x\in \mathbb{Z},
\end{equation}
satisfies the following SDE:  
\begin{equation}\label{barphi-1}
d \bar{\Phi}_t^N(x) = N^2 \e^{1/2} \De \bar{\Phi}_t^N(x) dt
+ N^2 \left(2\e^{1/2}- (\e+1)\right)\bar{\Phi}_t^N(x) dt +
\sqrt{N} d\bar{M}_t^N(x),
\end{equation}
which can  be represented in its mild form:
\begin{equation}\label{barphi-2}
\bar{\Phi}_t^N(x) =\sum_{y \in \mathbb{Z}}
p^N(t,x,y)e^{c_Nt}\bar{\Phi}_0^N(y)+
\int_0^t\sum_{y \in \mathbb{Z}} p^N(t-s,x,y)e^{c_N(t-s)}\sqrt{N}
d\bar{M}_s^N(y).
\end{equation}
Here $p^N(t,x,y)= p(N^2\e^{1/2}t, x-y)$ and
$p(t, x)$ is the (fundamental) solution of 
\begin{equation}\label{heatker}
\partial_tp(t,x)=\De p(t,x),\quad x\in \mathbb{Z},\quad \text{with}
\quad p(0,x)=\de _0(x),
\end{equation}
 and $c_N := N^2(2\e^{1/2} - (\e+1)) = -N^2(\e^{1/2}-1)^2$ behaves like
$c_N \sim -\b^2/4$.
\end{lem}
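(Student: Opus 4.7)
The plan is to derive \eqref{barphi-1} from the two SDEs \eqref{eq:SDE-4.2} and \eqref{eq:4.24}, and then obtain the mild form \eqref{barphi-2} by a standard variation-of-constants argument.

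First, I would subtract \eqref{eq:4.24} from \eqref{eq:SDE-4.2}. Both equations share the same linear drift operator $N^2\e^{1/2}\Delta + c_N$ (with $c_N = N^2(2\e^{1/2}-(\e+1))$) acting on the respective processes $\bar\zeta_t^N$ and $\bar\om_t^N$, the only difference being that the first carries a martingale term $d\bar M_t^N(x)$ while the second does not. Since $\Delta$ is linear, subtracting gives
\[
d\bigl(\bar\zeta_t^N(x)-\bar\om_t^N(x)\bigr)=N^2\e^{1/2}\Delta\bigl(\bar\zeta_t^N(x)-\bar\om_t^N(x)\bigr)dt+c_N\bigl(\bar\zeta_t^N(x)-\bar\om_t^N(x)\bigr)dt+d\bar M_t^N(x).
\]
Multiplying through by $\sqrt{N}$ and using the definition \eqref{eq:4.19} of $\bar\Phi_t^N(x)$ yields \eqref{barphi-1}. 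The only subtlety here is to verify that the discrete Laplacian $\Delta$ in \eqref{eq:4.24} is exactly the same operator as in \eqref{eq:SDE-4.2}, so that subtraction collapses the drift to an operation on the difference; this is an immediate consequence of the symmetric reflection $\bar\zeta_t^N(x)=\bar\zeta_t^N(2-x)$ and $\bar\om_t^N(x)=\bar\om_t^N(2-x)$ for $x\le 0$, which is built into the definitions.

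Next, for the mild form \eqref{barphi-2}, observe that \eqref{barphi-1} is a linear SDE on $\Z$ of the form $dY_t=(\mathcal{L}_N+c_N)Y_t\,dt+dN_t$, where $\mathcal{L}_N=N^2\e^{1/2}\Delta$ generates the transition kernel $p^N(t,x,y)=p(N^2\e^{1/2}t,x-y)$ with $p$ the fundamental solution of \eqref{heatker}. Duhamel's principle (equivalently, applying It\^o's formula to $e^{c_N(t-s)}\sum_y p^N(t-s,x,y)\bar\Phi_s^N(y)$ and integrating from $0$ to $t$) produces exactly \eqref{barphi-2}. The standard verification that the resulting process solves \eqref{barphi-1} is straightforward because all sums over $\Z$ converge by the rapid decay of $p(\cdot,\cdot)$ combined with integrability of $\bar\Phi_t^N$ (which can be checked separately by bounding moments, although it is not strictly needed to state the identity \eqref{barphi-2} as an equivalent formulation).

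Finally, the asymptotics $c_N\sim-\b^2/4$ follow from the expansion $\e=\e_R(N)=1-\b/N+O(\log N/N^2)$ recalled in Section 2. Expanding $\e^{1/2}=1-\b/(2N)+O(\log N/N^2)$ gives $\e^{1/2}-1\sim-\b/(2N)$, so that $c_N=-N^2(\e^{1/2}-1)^2\sim-\b^2/4$. The main (and only minor) obstacle in the proof is the bookkeeping needed to confirm that the symmetrization $\bar{\,\cdot\,}_t^N(x)=\bar{\,\cdot\,}_t^N(2-x)$ on $x\le 0$ is compatible with the discrete Laplacian globally, but this is immediate once one checks the boundary identity at $x=1$; no deeper estimate is required at this stage.
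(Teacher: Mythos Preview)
Your proposal is correct and follows essentially the same approach as the paper: subtract \eqref{eq:4.24} from \eqref{eq:SDE-4.2} and multiply by $\sqrt{N}$ to obtain \eqref{barphi-1}, then derive the mild form \eqref{barphi-2} by applying integration by parts (equivalently, It\^o's formula) to $s\mapsto p^N(t-s,x,y)e^{c_N(t-s)}\bar{\Phi}_s^N(y)$ and integrating over $[0,t]$. The paper's proof is stated in one sentence for each part; your additional remarks on the symmetrization and on the asymptotics of $c_N$ are helpful elaborations but not a different argument.
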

The SDE \eqref{barphi-1} is an immediate consequence from
\eqref{eq:SDE-4.2} and \eqref{eq:4.24}.
It is also  easy to obtain \eqref{barphi-2}. In fact, it is enough to apply
integration by parts to the process
$\{p^N(t-s, x,y)e^{c_N(t-s)}\bar{\Phi}_t(y)\}_{s\in [0, t]}$ for each
$t> 0$ and then integrate both sides from $0$ to $t$.

Now let us consider the linear interpolation of $\bar{\Phi}^N_t(x)$. More
precisely, we deal with the following process with values in
$D([0, T],C(\mathbb{R}))$:
\begin{equation}\label{DefLinInterPhi}
\bar{\Phi}^N(t, u) := ([Nu] +1 -Nu) \bar{\Phi}^N_t([Nu]) +(Nu- [Nu])
\bar{\Phi}^N_t([Nu] +1).
\end{equation}
The next subsection is devoted to prove the following proposition.

\begin{prop} \label{prop:4.4}
Suppose Assumption 2 is satisfied. Then, as $N \to \infty$, the transformed process ${\bar \Phi}^N(t,u)$
converges weakly to ${\bar{\Phi}}(t,u)$ on the space $D([0,T],C(\R))$.
Moreover, the limit ${\bar {\Phi}}(t,u)$ is in $C([0,T],C(\R))$
(a.\,s.) and it is a solution of the following SPDE:
\begin{equation} \label{eq:SPDE-4.3}
\left\{
\begin{aligned}
\partial_t{\bar \Phi}(t,u) & = {\bar{\Phi}}''(t,u)
- \tfrac{\b^2}4 {\bar \Phi}(t,u) \\
& \qquad  + e^{\b |u|/2} \b \,
\om(t,|u|)\sqrt{2\rho_R(t,|u|)(1-\rho_R(t,|u|))} \dot{\bar{W}}(t,u),
\quad u \in \R, \\
{\bar \Phi}(0,u) & = {\bar \Phi}_0(u),
\end{aligned}
\right.
\end{equation}
where $\bar{W}$ is a $Q$-cylindrical Brownian motion on $L^2(\mathbb{R})$
with the following covariance: for any test functions $\phi$ and $\psi$ on
$\R$,
\begin{equation}\label{Q-Wiener}
E[{\bar{W}}(t, \phi){\bar{W}}(t, \psi)] =s\wedge t \
\lan \phi, Q\psi \ran
\end{equation}
with $Q \psi(u)=\psi(u) +\psi(-u)$, and $\lan \cdot , \cdot \ran$ denotes
the inner product of $L^2(\R)$.
Furthermore, if  ${\bar \Phi}_0 \in\cap_{r>\b/2}L_r^2(\mathbb{R})$ then
there exists a unique weak solution ${{\bar{\Phi}}(t,\cdot)}$ in $C([0,T],
\cap_{r>\frac{\b}{2}}L_r^2(\mathbb{R}))$.
\end{prop}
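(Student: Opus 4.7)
The approach follows the Bertini--Giacomin paradigm \cite{BG} adapted to the reflected process on $\Z$ constructed in Lemma \ref{lem:4.3}, and splits into three steps: tightness of $\{\bar{\Phi}^N\}$ in $D([0,T],C(\R))$, identification of the limit through the mild form \eqref{barphi-2}, and uniqueness in the weighted class.

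For tightness I would work directly from the mild representation \eqref{barphi-2}. The discrete heat kernel $p^N(t,x,y)=p(N^2\e^{1/2}t,x-y)$ satisfies a sub-Gaussian local CLT bound, and the factor $e^{c_N t}$ is bounded by $1$ since $c_N<0$. Together with the exponential bound $\bar{\zeta}_t^N(x)^2\le e^{-(\log\e)|x|}$ (and its finite exponential moments ensured by Assumption 2-(1)(i) after passing through the symmetry of the state space), the deterministic term is controlled. For the stochastic part I would apply Burkholder--Davis--Gundy combined with \eqref{eq:4.SDE-1-1}, using $a_N,b_N\to\b^2$ and $c_\pm\le 1$; after summing over $y\in\Z$ using sub-Gaussian kernel estimates I obtain H\"older moment bounds in $(t,u)$ of the form $E[|\bar{\Phi}^N(t_1,u_1)-\bar{\Phi}^N(t_2,u_2)|^{2\k}]\le C(|t_1-t_2|+|u_1-u_2|^2)^{\k/2-\e}$ uniformly on compacts, which together with the initial bound from Assumption 2-(1)(ii)--(iii) gives tightness in $D([0,T],C(\R))$ with the locally uniform topology.

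To identify the limit I would pass to the limit in each term of \eqref{barphi-2} tested against $f\in C_0^\infty(\R)$. The semigroup part converges to the continuous heat semigroup $e^{t(\partial_u^2-\b^2/4)}\bar{\Phi}_0$, using $N^2\e^{1/2}\to 1$, $c_N\to -\b^2/4$, the assumed convergence of $\bar{\Phi}_0^N$ from Assumption 2, and the convergence \eqref{Pro4.3-Proof-2} for $\bar{\om}^N$. For the martingale part, I would compute its quadratic variation: thanks to the orthogonality in \eqref{eq:4.SDE-1-1} and the reflection $\bar{M}_t^N(x)=\bar{M}_t^N(2-x)$, it equals a sum over $x\ge 1$ of $N\,\bar{\zeta}_t^N(x)^2\bigl(a_Nc_+(x-1,\eta_t^N)+b_Nc_-(x-1,\eta_t^N)\bigr)$ weighted by $(f(x/N)+f(-x/N))^2$ (up to boundary terms of order $1/N$). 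The Boltzmann--Gibbs principle (cf. Chapter 11 of \cite{KL}) then replaces $c_\pm$ by their local average $\rho_R(t,x/N)(1-\rho_R(t,x/N))$; combined with $\bar{\zeta}_t^N(x)\to e^{\b|u|/2}\om(t,|u|)$, $a_N,b_N\to\b^2$, and a Riemann-sum argument, the limiting quadratic variation equals
\[
\int_0^t\!\!\int_\R \bigl(f(u)+f(-u)\bigr)^2\, e^{\b|u|}\om(t,|u|)^2\,\b^2\cdot 2\rho_R(t,|u|)(1-\rho_R(t,|u|))\,du\,ds,
\]
which is exactly the quadratic variation of the stochastic integral in \eqref{eq:SPDE-4.3} driven by the $Q$-cylindrical noise \eqref{Q-Wiener}. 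Standard martingale CLT arguments then upgrade convergence of the quadratic variation to convergence of the stochastic integral, identifying the limit.

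For uniqueness in $C([0,T],\cap_{r>\b/2}L_r^2(\R))$, the SPDE \eqref{eq:SPDE-4.3} is linear with noise coefficient bounded by $C e^{\b|u|/2}$ (since $\om$ is bounded and $\rho_R(1-\rho_R)\le 1/4$). The Picard iteration in mild form with the heat semigroup $e^{t(\partial_u^2-\b^2/4)}$ closes in $L_r^2(\R)$ provided $\int e^{\b|v|}e^{-2r|v|}dv<\infty$, i.e. $r>\b/2$; a Gronwall argument on the weighted norm then gives both existence and uniqueness. The main obstacle in the whole scheme is the Boltzmann--Gibbs replacement in Step~2 in the non-equilibrium, weakly asymmetric, and reflected setting: one needs one-block/two-block estimates with a time and space dependent local density $\rho_R(t,u)$ and uniform control near $u=0$ where the reflection is active. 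Once this replacement is secured, the remaining limits (kernel, deterministic drift, initial data) are routine.
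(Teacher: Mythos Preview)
Your outline follows the paper's strategy (Bertini--Giacomin via the mild form \eqref{barphi-2}) and is largely sound, but the step you flag as the ``main obstacle'' rests on a misconception. After Hopf--Cole the drift of $\bar\Phi^N$ is \emph{linear}; this is precisely why the transformation is made. The nonlinear functions $c_\pm(x,\eta_t^N)$ therefore appear only inside the \emph{quadratic variation} of the martingale term, not in the drift at fluctuation scale, so what is needed is merely the law-of-large-numbers replacement
\[
\frac1N\sum_{x\ge 2} \zeta_t^N(x)^2 e^{-(\log\e)x}\bigl(a_Nc_++b_Nc_-\bigr)(x-1,\eta_t^N)\,\bigl(g(\tfrac{x}{N})+g(-\tfrac{x}{N})\bigr)^2 \longrightarrow \b^2\!\int_{\R_+}\!e^{\b u}\om(t,u)^2\cdot 2\rho_R(1-\rho_R)\bigl(g(u)+g(-u)\bigr)^2du,
\]
and this follows directly from the hydrodynamic limit already proved in \cite{FS} (Corollary 5.3 there). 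No Boltzmann--Gibbs principle, no one-block/two-block estimate at the $\sqrt N$ scale, and no non-equilibrium replacement lemma is required; the difficulty you anticipate simply does not arise here.

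Two genuine technical points are glossed over in your sketch. First, the bound you write as $\bar\zeta_t^N(x)^2\le e^{-(\log\e)|x|}$ is missing the factor $\zeta_t^N(1)^2$, and Assumption 2-(1)(i) controls this only at $t=0$; propagating $\sup_N E[\sup_{s\le T}\zeta_s^N(1)^{2\k}]<\infty$ is a separate argument (the paper's Lemma \ref{Lem4.8}, via an exponential test functional and Gronwall), and every moment estimate in the tightness proof hinges on it. Second, $\bar\Phi^N(t,u)$ has jumps in $t$, so H\"older moment bounds alone do not yield tightness in $D([0,T],C(\R))$ via Kolmogorov; the paper instead uses the Aldous--Kurtz criterion, introduces a time-interpolated proxy $\bar{\mathbf{\Phi}}^N$, and proves it is uniformly close to $\bar\Phi^N$ (Lemma \ref{errorphi}). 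Your uniqueness argument in weighted $L^2_r$ for $r>\b/2$ is essentially what the paper does.
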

\begin{rem}
The $Q$-cylindrical Brownian motion $\bar{W}$ can be easily constructed
based on a Brownian sheet on
$[0, \infty)\times \mathbb{R}_+$.
\end{rem}
The weak  and mild solutions of \eqref{eq:SPDE-4.3} are defined in similar
ways to \eqref{eq:SPDE-2}:
$\bar{\Phi}(t,u)$ is said to be  a  weak solution of the SPDE
\eqref{eq:SPDE-4.3} with  initial value
$\bar{\Phi}_0 \in \cap_{r>\b/2}L_r^2(\mathbb{R})$ if  $\bar{\Phi}\in
C([0,T],C(\R))
\cap C([0, T], \cap_{r>\b/2}L_r^2(\mathbb{R}))$ (a.\,s.) and for every
function $f \in C^{1, 2}_0([0,T]\times\R)$,
\begin{equation}\label{eq:4.10}
\begin{split}
&  \lan \bar{\Phi}(t),f(t)\ran = \lan\bar{\Phi}_0,f(0)\ran + \int_0^t
\lan \bar{\Phi}(s),f''(s)- \tfrac{\b^2}4 f(s) +\partial_s f\ran ds\\
& \quad + \int_0^t \int_{\R} f(s,u) e^{\b |u|/2} \b \,
\om(s,|u|)\sqrt{2\rho_R(s,|u|)(1-\rho_R(s,|u|))} \bar{W}(dsdu)
\text{ a.\,s.}
\end{split}
\end{equation}
In particular, from its mild form
\begin{align*}
&\bar{\Phi}(t,u)=  \int_\R \frac1{\sqrt{4\pi t}} e^{\bigl\{
-\frac{\b^2}4 t -\frac{(u-v)^2}{4t} \bigr\}} \bar{\Phi}_0(v)dv \\
&\quad+ \int_0^t \int_{\R}\frac1{\sqrt{4\pi t}} e^{
\bigl\{-\frac{\b^2}4( t-s) -\frac{(u-v)^2}{4(t-s)}\bigr\}}
e^{\frac{\b |v|}{2}}
\b \, \om(s,|v|)\sqrt{2\rho_R(s,|v|)(1-\rho_R(s,|v|))} \bar{W}(dsdv),
\end{align*}
we can easily show the existence and uniqueness in  $C([0,T],C(\R))
\cap C([0, T], \cap_{r>\b/2}L_r^2(\mathbb{R})).$

%

From Proposition \ref{prop:4.4},  we can obtain a result for fluctuation of
$\tilde{\zeta}^N(t,u)$ around its
limit $\om(t, u)$, which is used to show Theorem \ref{thm:2.2} in the last
part of this subsection.
\begin{cor} \label{prop:3.1}
Under Assumption 2,
$\Phi^N(t,u):= \sqrt{N} (\tilde{\zeta}^N(t,u) -
\om(t,u))$ converges weakly to $\Phi(t,u)$ on the space
$D([0,T],C(\R_+))$ as $N\to\infty$.
Moreover the limit $\Phi(t,u)$ is in $C([0,T],C(\R_+))$\ (a.s.) and
characterized as a solution of the  SPDE:
\begin{equation}  \label{eq:3.2}
\left\{
\begin{aligned}
\partial_t\Phi(t,u) & = \Phi''(t,u) + \b \Phi'(t,u)\\
&\quad\quad+ \b\, \om(t,u)\sqrt{2\rho_R(t,u)(1-\rho_R(t,u))}
\dot{W}(t,u),\ u\in \R_+,\\
2\Phi'(t,0) &+ \b \Phi(t,0) = 0, \\
\Phi(0, u)& =\Phi_0(u),
\end{aligned}
\right.
\end{equation}
which has a unique weak solution in  $C([0,T],L_e^2(\R_+))$ for each
$\Phi_0 \in L_e^2(\R_+)$.
\end{cor}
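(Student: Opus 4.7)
Since this corollary is a boundary-value counterpart of Proposition \ref{prop:4.4}, the plan is to identify the limit $\Phi(t,u)$ explicitly as a rescaled restriction of $\bar{\Phi}(t,u)$ from Proposition \ref{prop:4.4} and to transport the SPDE through this transformation.

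First, I would establish the asymptotic identification
\[
\Phi^N(t,u) = e^{-\b u/2}\,\bar{\Phi}^N(t,u) + R^N(t,u), \qquad u\ge 0,
\]
with $\sup_{t\le T,\,u\in[0,K]}|R^N(t,u)|\to 0$ in probability for every $K>0$. The key observations are that Lemma \ref{lem:4.3} gives $\zeta_t^N(x)=e^{(\log\e)x/2}\bar{\zeta}_t^N(x)$ on $\N$, that the definition \eqref{eq:4.inter} yields $\tilde{\zeta}^N(t,x/N)=\zeta_t^N(x+1)$ up to the piecewise-linear factor $\exp\{-(\log\e)\cdot O(1)\}=1+O(1/N)$, and that $\omega(t,u)=e^{-\b u/2}\bar{\om}(t,u)$ by construction of $\bar{\om}$. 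Combining these with $-\log\e_R(N)=\b/N+O(\log N/N^2)$ and the uniform approximation \eqref{Pro4.3-Proof-2} of $\bar{\om}_t^N$ by $\bar{\om}$, the residual $R^N$ is dominated on compacts by a deterministic $O(\log N/\sqrt{N})$ times $\sup_{t,u}\bar{\zeta}^N(t,u)$, and the latter is bounded in probability by Proposition \ref{prop:4.4}.

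Given this identity, the continuity of the map $g\mapsto e^{-\b\cdot/2}\,g|_{\R_+}$ from $D([0,T],C(\R))$ into $D([0,T],C(\R_+))$ together with Proposition \ref{prop:4.4} yields the weak convergence $\Phi^N\Rightarrow\Phi$ in $D([0,T],C(\R_+))$ with the limit $\Phi(t,u):=e^{-\b u/2}\bar{\Phi}(t,u)$, which belongs to $C([0,T],C(\R_+))$ almost surely. To check that $\Phi$ solves \eqref{eq:3.2}, a direct chain-rule computation gives
\[
\partial_t\Phi = e^{-\b u/2}\partial_t\bar{\Phi}, \qquad \Phi''+\b\Phi' = e^{-\b u/2}\bigl(\bar{\Phi}''-\tfrac{\b^2}{4}\bar{\Phi}\bigr),
\]
so the deterministic part of \eqref{eq:SPDE-4.3} matches the drift of \eqref{eq:3.2} after multiplication by $e^{-\b u/2}$. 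For the noise term, $e^{-\b u/2}\cdot e^{\b u/2}\b\om\sqrt{2\rho_R(1-\rho_R)}=\b\om\sqrt{2\rho_R(1-\rho_R)}$, and the $Q$-cylindrical Brownian motion $\bar{W}$ with $Q\psi(u)=\psi(u)+\psi(-u)$ satisfies $\lan\psi,Q\psi\ran=\int_0^\infty\psi^2$ for $\psi$ supported on $\R_+$, so it acts as a standard cylindrical white noise $\dot{W}$ on $\R_+$. The boundary condition $2\Phi'(t,0)+\b\Phi(t,0)=0$ follows from the symmetry $\bar{\Phi}(t,-u)=\bar{\Phi}(t,u)$ inherited from $\bar{\zeta}_t^N(x)=\bar{\zeta}_t^N(2-x)$ and the even initial condition $\bar{\om}_0^N(|x|)$: this gives $\bar{\Phi}'(t,0)=0$, so $\Phi'(t,0)=-\tfrac{\b}{2}\bar{\Phi}(t,0)=-\tfrac{\b}{2}\Phi(t,0)$. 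Existence and uniqueness of a weak solution in $C([0,T],L_e^2(\R_+))$ for $\Phi_0\in L_e^2(\R_+)$ then follow from the mild formulation built on the heat semigroup associated with $\partial_u^2+\b\partial_u$ subject to the mixed boundary condition at $0$, via the same routine argument sketched after \eqref{eq:4.10} for $\bar{\Phi}$.

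The main obstacle is the first step, i.e. promoting the formal identity $\tilde{\zeta}^N\approx e^{-\b u/2}\bar{\zeta}^N$ to a uniform stochastic bound on $R^N$: one must simultaneously control the deterministic gap between $e^{(\log\e)([Nu]+1)/2}$ and $e^{-\b u/2}$ (of order $O(\log N/N)$), the $O(1/N)$ interpolation factor in \eqref{eq:4.inter}, and the uniform deterministic rate \eqref{Pro4.3-Proof-2} between $\bar{\om}_t^N$ and $\bar{\om}$, while keeping $\bar{\zeta}^N(t,u)$ bounded on compacts with high probability. Once this uniform comparison is in hand, the translation of drifts, noise coefficients, the Robin-type boundary condition and the well-posedness of the limiting SPDE are standard.
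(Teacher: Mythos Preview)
Your approach is essentially the paper's: both identify $\Phi(t,u)=e^{-\beta u/2}\bar{\Phi}(t,u)$, control the comparison error via a uniform bound on $\zeta_t^N(1)$ (the paper invokes Lemma \ref{Lem4.8} rather than tightness from Proposition \ref{prop:4.4}), and then transport the SPDE. The one difference is that the paper carries out the SPDE identification rigorously at the level of the weak formulation---for $g$ satisfying $2g'(t,0)-\beta g(t,0)=0$ one sets $f(t,u)=e^{-\beta|u|/2}g(t,|u|)$, and this adjoint boundary condition on $g$ is exactly what makes $f\in C_0^{1,2}([0,T]\times\R)$---whereas your chain-rule computation and the argument $\bar{\Phi}'(t,0)=0$ from symmetry are the corresponding formal strong-form version (note that $\bar{\Phi}$ is only a weak solution, and that the relevant test functions for the noise are even extensions rather than functions supported on $\R_+$, so the factor-of-two bookkeeping needs to be done carefully).
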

\begin{proof}
Assume Proposition \ref{prop:4.4} is proved. Consider the even functions
$e^{\b |u|/2} \Phi^N(t,|u|)$ on $D([0, T], C(\R))$. We first show that  for
each
$K>0$
\begin{equation}\label{Pro4.3-Proof-2-1}
\lim_{N \to \infty}E\Bigl[\sup_{t\in [0, T]}\sup_{u\in
[-K,K]}|e^{\b |u|/2} \Phi^N(t,|u|)- \bar{\Phi}^N(t,u)|^{2\k }
\Bigr]=0.
\end{equation}
The monotonicity of $\zeta_t^N(x)$ in $x \in \mathbb{N}$ yields
\[
\label{Pro4.3-Proof-3}
\sqrt{N}\bigl| ([Nu] +1 -Nu) \bar{\zeta}_t^N([Nu])+
(Nu-[Nu])\bar{\zeta}_t^N([Nu] +1)) -e^{\b
|u|/2}\tilde{\zeta}^N(t,|u|) \bigr| \leq
CN^{-\frac12}\zeta_t^N(1).
\]
Now Lemma \ref{Lem4.8} below with \eqref{Pro4.3-Proof-2} completes the
proof of \eqref{Pro4.3-Proof-2-1}.
Therefore, as  $N \to  \infty$, $e^{\b |u|/2} \Phi^N(t,|u|)$ converges
weakly on $D([0,T],C(\R))$ to the same limit
${\bar {\Phi}}(t,u)$  as that of  ${\bar \Phi}^N(t,u)$ and it immediately
follows that $\Phi^N(t,u)$ converges weakly on $D([0,T],C(\R_+))$ to
$\Phi(t,u) (=e^{-\b u/2}{\bar
\Phi}(t,u)),\ u \in \mathbb{R}_+$
and the limit $\Phi(t,u)$ is in $C([0,T],C(\R_+))$ (a.\,s.).

To see that $\Phi(t,u)$ is a solution of the SPDE \eqref{eq:3.2}, for a
given $g\in C_0^{1, 2}([0,T]\times \R_+)$ satisfying
$2g'(t,0)-\b\,g(t,0)=0$, set $f(t,u)= e^{-\b |u|/2}g(t,|u|)$. Then $f\in
C_0^{1, 2}([0,T]\times \R)$. Taking such $f$ in \eqref{eq:4.10}, a simple
computation yields
\begin{align*}
\lan\Phi(t),g(t)\ran
=& \lan\Phi_0,g(0)\ran + \int_0^t \lan\Phi(s),g''(s)-\b g'(s)+\partial_s
g(s)\ran ds\\
& + \b \int_0^t \int_{\R_+} g(s,u)
\om(s,u)\sqrt{2\rho_R(s,u)(1-\rho_R(s,u))} W(dsdu)  \quad\text{a.\,s.},
\end{align*}
which completes the proof.
\end{proof}
\begin{rem}
A microscopic interpretation of the mixed boundary condition at
$u=0$ in \eqref{eq:3.2} is found in Lemma 5.8 of \cite{FS}.
\end{rem}
From now on, we formulate the proof of Theorem \ref{thm:2.2} based on
Proposition \ref{prop:4.4}, or more precisely Corollary \ref{prop:3.1},
which is divided into two lemmas.
First note that Assumption 2 can be rewritten into conditions on
$\bar{\Phi}^N_0$. This is mostly used later on but we state it here since
the assertion (3) is needed.
\begin{lem}\label{WASEP-Ini}
Under Assumption 2,  the following holds:
\begin{enumerate}
\item For any  $\k \in \N$, the following estimates hold:
\begin{enumerate}
\item $\sup_NE[\bar{\zeta}_0^N(1)^{2\k }]<\infty$,
\item $E\left[ \left|\bar{\Phi}_0^N(x) \right|^{2\k} \right]\leq  C
e^\frac{\k ' \b  |x|}{N}, x\in \Z$,
\item $E\left[ \left|\bar{\Phi}_0^N(x)-\bar{\Phi}_0^N(y) \right|^{2\k}
\right]\leq
C (e^{\frac{\k' \b |x|}{N}} + e^{\frac{\k' \b |y|}{N}}) \left(\left|
\tfrac{x-y}{N} \right|^{2\k \a } +
\left|\tfrac{x-y}{N}\right|^{2\k}\right)$, for $x, y \in \mathbb{Z}$ and
some $\k' > \k$ and any $\a  \in (0, 1/2)$.
\end{enumerate}
\item $\{\bar{\Phi}_0^N(x)\}_N$ are independent of the noises determining
the process $\{\eta_t^N; t\ge 0\}$,
\item $\bar{\Phi}^N(0,u)$ converges weakly to $\bar{\Phi}_0(u)=
e^{\b |u|/2} \b \om(0,|u|)\Psi_{R,0}(|u|)$ in $C(\R)$ as $N \to \infty$.
In addition, for  all $r>\frac{\b}{2}$,
$E\bigl[|\bar{\Phi}_{0}|_{L_r^2(\R)}^2 \bigr]<\infty$.
\end{enumerate}
\end{lem}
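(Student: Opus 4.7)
My plan is to express $\bar{\zeta}_0^N(x)$ and $\bar{\om}_0^N(x)$ as explicit functions of $\tilde\psi_R^N(0,\cdot)$ and $\psi_R(0,\cdot)$, and then to transfer the bounds in Assumption 2 across a Taylor expansion of the exponential. The starting point is the telescoping identity $\sum_{y\ge x}\eta_0(y) = \psi_{q_0^{\e}}(x-1) = N\tilde\psi_R^N(0,(x-1)/N)$, which yields for $x \ge 1$
\[
\bar{\zeta}_0^N(x) = \exp\Bigl\{-(\log\e)\bigl(\tfrac{x}{2} + N\tilde\psi_R^N(0,\tfrac{x-1}{N})\bigr)\Bigr\}, \qquad \bar{\om}_0^N(x) = \exp\Bigl\{-(\log\e)\tfrac{x}{2} + \b\psi_R(0,\tfrac{x}{N})\Bigr\},
\]
using $\om(0,u) = e^{\b\psi_R(0,u)}$, together with the reflection for $x \le 0$. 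Since $-N\log\e = \b + O(\log N/N)$ and $0 \le -\psi_R'(0,u) \le 1$, both quantities are sandwiched between constant multiples of $e^{\b|x|/(2N)}$.

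Statement (1)(i) follows by specializing to $x=1$: $\bar{\zeta}_0^N(1)^{2\k} \le C_\k \exp\{2\k\b\tilde\psi_R^N(0,0)\}$ for large $N$, and Assumption 2-(1)(i) yields the uniform bound. For (1)(ii), I would factor out $\bar{\om}_0^N(x)$ and write
\[
\bar{\Phi}_0^N(x) = \sqrt{N}\,\bar{\om}_0^N(x)\bigl(e^{A_N(x)}-1\bigr), \qquad A_N(x) := (-\log\e)N\bigl[\tilde\psi_R^N(0,\tfrac{|x|-1}{N}) - \psi_R(0,\tfrac{|x|}{N})\bigr] + r_N(x),
\]
where $r_N(x) = O(\log N/N)(1+|x|/N)$ is deterministic and the leading random part of $A_N(x)$ equals $\b N^{-1/2}\Psi_R^N(0,(|x|-1)/N)$ modulo lower order. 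Combining the elementary inequality $|e^a-1|^{2\k}\le |a|^{2\k}e^{2\k|a|}$ with a Cauchy--Schwarz split, Assumption 2-(1)(ii) for $\Psi_R^N$, and an exponential-moment bound on $A_N(x)$ furnished by Assumption 2-(1)(i), would produce the desired exponent $\k' > \k$: the surplus $(\k'-\k)\b|x|/N$ is exactly what is needed to absorb the multiplicative factor $e^{C|A_N(x)|}$.

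Statement (1)(iii) is obtained along the same lines via the mean value theorem applied to $e^{A_N(\cdot)}-1$: the Lipschitz regularity of $\bar{\om}_0^N$ and of the deterministic part of $A_N$ in $|x-y|/N$ contributes the $|(x-y)/N|^{2\k}$ term, while the random increment $\Psi_R^N(0,(|x|-1)/N) - \Psi_R^N(0,(|y|-1)/N)$, interpolated between the pointwise $L^{4\k}$ bound of (1)(ii) and the H\"older bound of Assumption 2-(1)(iii), yields any H\"older exponent $\a \in (0,1/2)$. Statement (2) is immediate because $\bar{\Phi}_0^N$ is a measurable function of $\eta_0^N$ alone. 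For (3), the Taylor expansion of Step (ii) gives $\bar{\Phi}^N(0,u) = \b\,\bar{\om}_0^N([Nu])\,\Psi_R^N(0,([Nu]-1)/N) + \text{rem}^N(u)$ uniformly on compacts with $\text{rem}^N \to 0$ in probability; the deterministic convergence $\bar{\om}_0^N([Nu]) \to e^{\b|u|/2}\om(0,|u|)$, combined with the weak convergence $\Psi_R^N(0,\cdot) \to \Psi_{R,0} \in C(\R_+)$ and the reflection to $\R$, then delivers the weak convergence to $\bar{\Phi}_0(u) = e^{\b|u|/2}\b\,\om(0,|u|)\Psi_{R,0}(|u|)$ in $C(\R)$ through the continuous mapping theorem. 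The $L_r^2$-bound with $r > \b/2$ follows by integrating the pointwise $L^2$ bound against $e^{-2r|u|}$, since $\om(0,|u|)^2 e^{(\b-2r)|u|}$ is integrable. The main obstacle I anticipate is the careful bookkeeping of multiplicative errors of the form $e^{O(\log N/N)(|x|+N\tilde\psi_R^N(0,0))}$: these prevent the cleaner choices $\k'=\k$ and $\a=1/2$ and have to be absorbed using Assumption 2-(1)(i) rather than the polynomial moment bounds.
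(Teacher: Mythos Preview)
Your proposal is correct and follows essentially the same route as the paper: write $\bar{\zeta}_0^N(x)$ and $\bar{\om}_0^N(x)$ explicitly in terms of $\tilde\psi_R^N(0,\cdot)$ and $\psi_R(0,\cdot)$, then linearize the exponential and transfer the moment bounds of Assumption~2. The only cosmetic difference is that the paper applies the mean value theorem directly to produce an intermediate point $\theta^N(x)$ between $\b\psi_{R,0}(x/N)$ and $-(\log\e)N\tilde\psi_R^N(0,x/N)$, whereas you factor out $\bar\om_0^N(x)$ and use $|e^a-1|\le |a|e^{|a|}$; both routes lead to the same Cauchy--Schwarz split against the exponential moment in Assumption~2-(1)(i), which is exactly what forces $\k'>\k$. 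For (1)(iii) the paper makes the decomposition explicit (three pieces $A_1,A_2,A_3$, with $A_1$ absorbing the $O(\log N/\sqrt N)$ error into an $N^{-2\k\a}\le |(x-y)/N|^{2\k\a}$ bound and $A_2,A_3$ carrying the H\"older and Lipschitz increments), which is precisely your outlined split; and for (3) the paper passes through $e^{\b|u|/2}\Phi^N(0,|u|)$ rather than the lattice interpolation, but again this amounts to the same Taylor expansion you describe.
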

\begin{proof}
Conditions (1)-(3) in Assumption 2 are referred to as (A2-1)-(A2-3),
respectively. (2) is obviously implied by (A2-2).
The next step is the  condition (1). The properties of the transformation
from $q \in \mathcal{Q}$ to
$\eta \in \mathcal{X}_R$, see Lemma 5.1 of \cite{FS}, yield for any
$x\in \N$
\[
\bar{\zeta}_0^N(x)= e^{-(\log \e) |x|/2} \exp \bigl\{-(\log \e) N
\tilde{\psi}_R^N \bigl(0,\left|\tfrac{x}{N}\right|\bigr)\bigr\}.
\]
Therefore, (i) follows directly from (A2-1)(i).

By the definition of $\bar{\Phi}_0^N(x)$ it is enough to prove (ii) for $x
\in \N$. Since 
$\om (0, u) = \exp\{\b \psi_{R,0}(u)\}, u \geq 0$, we deduce that
\begin{equation}\label{WASEP-Ini-0-New}
\bar{\Phi}_0^N(x)=\sqrt{N} e^{-(\log \e) x/2} \Bigl[\exp\bigl\{-(\log
\e)N \tilde{\psi}_R^N
\bigl(0,\tfrac{x}{N}\bigr) \bigr\} - \exp \bigl\{\b
\psi_{R,0}\bigl(\tfrac{x}{N}\bigr) \bigr\}\Bigr].
\end{equation}
The mean value theorem implies the existence of a random variable
$\th ^N(x)$ with values between
$\b \psi_{R,0}\bigl(\tfrac{x}{N}\bigr)$ and $-(\log \e)N
\tilde{\psi}_R^N\bigl(0,\tfrac{x}{N}\bigr)$ such that
\[
\bar{\Phi}_0^N(x)= e^{-(\log \e) x/2 + \th ^N(x)} \bigl[\b\Psi_R^N
\bigl(0,\tfrac{x}{N}\bigr) - \sqrt{N} \bigl(\b +(\log \e)N \bigr)
\tilde{\psi}_R^N\bigl(0,\tfrac{x}{N}\bigr) \bigr].
\]
Note that  $\sqrt{N}(\b+(\log \e) N ) \to  0$ with order
$O\bigl(\tfrac{\log N}{\sqrt{N}}\bigr)$ and combine the monotonicity  of
$\tilde{\psi}_R^N (0, u)$ and $\psi_{R,0}(u)$ with (A2-1)(i) leads to the
estimate
\begin{equation}\label{WASEP-Ini-0}
\sup_{x,N }E\bigl[e^{ 4\k \th ^N(x)}\bigr] \leq C \quad \text{and} \quad
E\Bigl[\bigl|\sqrt{N} \bigl(\b +(\log \e)N \bigr) \tilde{\psi}_R^N(0,
\tfrac{x}{N})
\bigr|^{4 \k} \Bigr]\leq C\Biggl(\frac{\log N}{\sqrt{N}}\Biggr)^{4\k}.
\end{equation}
After applying Schwarz's inequality together with (A2-1)(ii), we arrive at
\[
E \bigl[ \bigl|\bar{\Phi}_0^N(x) \bigr|^{2\k} \bigr] \leq
C e^{-2 \k(\log \e) x} \leq C e^{ \frac{\k' \b x}{N}}.
\]
As explained above, we assume $x, y \in \N$ in the proof of (iii).
It follows  from \eqref{WASEP-Ini-0-New} that
\begin{equation}\label{WASEP-Ini-0-1}
\begin{split}
\bar{\Phi}_0^N(x) -\bar{\Phi}_0^N(y)=& A_1 + \sqrt{N}
e^{-(\log \e) x/2}\left(\exp \{\b \tilde{\psi}_R^N
\bigl(0,\tfrac{x}{N}\bigr)\} -
\exp\{ \b \psi_{R,0}\bigl(\tfrac{x}{N}\bigr)\} \right) \\
& -\sqrt{N} e^{-(\log \e) y/2}\left(\exp \{\b \tilde{\psi}_R^N\bigl(0,
\tfrac{y}{N}\bigr) \} - \exp\{\b \psi_{R,0}\bigl(
\tfrac{y}{N}\bigr) \}\right),
\end{split}
\end{equation}
where the term $A_1:= A_1^N(x,y)$ is estimated as above by
\[
E[|A_1|^{2\k}] \leq C \Bigl(e^{\frac{\k' \b x}{N}} +
e^{\frac{\k' \b y}{N}}\Bigr)N^{-2\k \a}\ \text{ for every } \a <1/2.
\]
Rewriting the two other summands on the right hand side of
\eqref{WASEP-Ini-0-1} yields
\[
\left| \bar{\Phi}_0^N(x) -\bar{\Phi}_0^N(y) \right|^{2\k} \leq C
|A_1|^{2\k} + \Bigl(e^{\frac{\k' \b x}{N}} +
e^{\frac{\k' \b y}{N}}\Bigr) |A_2 + A_3|^{2\k},
\]
with
\begin{align*}
 A_2& = N^{\frac12}\exp\{\b\psi_{R,0}(\tfrac{x}{N}) \}\bigl( \exp \{\b
\bigl(\tilde{\psi}_R^N(0, \tfrac{x}{N})
 -\psi_{R,0}(\tfrac{x}{N}) \bigr) \} - \exp \{\b
\bigl(\tilde{\psi}_R^N(0, \tfrac{y}{N}) -\psi_{R,0}(
\tfrac{y}{N})\bigr)\}\bigr) \\
A_3 &= N^{\frac12}\bigl( \exp\{\b\psi_{R,0}(
\tfrac{x}{N}) \}-\exp\{\b\psi_{R,0}(\tfrac{y}{N})\} \bigr)
\bigl(\exp \{\b \bigl( \tilde{\psi}_R^N(0, \tfrac{y}{N}) -\psi_{R,0}
(\tfrac{y}{N}) \bigr) \} -1\bigr).
\end{align*}
However, with a similar approach to \eqref{WASEP-Ini-0} one can derive
\[
E[|A_2|^{2\k}] \leq C \left|\tfrac{x-y}{N} \right|^{\k} \quad \text{and}
\quad  E[|A_3|^{2\k}] \leq C \left|\tfrac{x-y}{N} \right|^{2\k}.
\]
This concludes the proof of (iii).

The final task is assertion (3). The proof of Corollary \ref{prop:3.1}
suggests that it is enough to prove  (3) for $e^{\b |u|} \Phi^N(0, |u|)$
instead of $\bar{\Phi}^N(t,u)$.
It is easy to see that
\begin{equation}\label{WASEP-Ini-3}
\begin{split}
& \Phi^N(0, u)=\sqrt{N} \left[\exp \{ \b \tilde{\psi}_R^N(0,u)\} -
\exp \{ \b \psi_{R,0}(u) \} \right]   \\
 & + \sqrt{N} \Bigl[\exp\{ -(N\log\e) \tilde{\psi}_R^N \bigl(0,
\tfrac{[Nu]}{N} \bigr) - (\log\e) r^N(0,u)\} -\exp \{\b
\tilde{\psi}_R^N(0,u) \} \Bigr]
\end{split}
\end{equation}
with $r^N(0,u) = 1_{\{u\ge 1/N\}} ([Nu]+1-Nu)\eta_0^N([Nu]) \in [0,1]$.
By Taylor's theorem, the first term on the right hand side of
\eqref{WASEP-Ini-3} is equal to
\begin{equation}\label{WASEP-Ini-4}
\b \exp \{\b \psi_{R,0}(u) \} \Psi_R^N(0, u) +\tfrac12 \b^2 \exp\{ \b
\th ^N(u)\} N^{-1/2} \bigl(\Psi_R^N(0,u) \bigr)^2,
\end{equation}
where $\th ^N(u)$ is a random variable with values between
$\tilde{\psi}_{R}^N(0, u)$ and $\psi_{R, 0}(u)$.
From (A2-3), it  follows that the first part converges weakly to $\b
\om(0,u)\Psi_{R,0}(u)$ in $D(\R_+)$.
In a similar way to the  proof of  (1)(ii), we see that
\[
E\bigl[ \bigl| \exp\{ \b \th ^N(u) \}N^{-1/2} (\Psi_R^N(0, u))^2
 \bigr|^{\k} \bigr] \leq CN^{-\k/2} E\bigl[\bigl(
\Psi_R^N(0, u) \bigr)^{4\k} \bigr]^{1/2},
\]
which, combined with the right continuity of $\Psi_R^N(0, u)$, implies that
the second term of \eqref{WASEP-Ini-4} converges to $0$ in probability in
$D(\R_+)$.
In addition, it is easy to check that the second term in \eqref{WASEP-Ini-3}
also converges to $0$
in probability in $D(\R_+)$. Because $e^{\b |u|/2} \Phi^N(0, |u|)$ is even,
we see  that
$e^{\b |u|/2} \Phi^N(0, |u|)$ converges weakly to
$e^{\b |u|/2} \b \om(0,|u|)\Psi_{R,0}(|u|)$ on $D(\R)$. On the other hand, since the Skorohod topology
relativized to $C(\R_+)$ coincides with its locally uniform topology, the
continuity of $\Phi^N(0, u)$ in $u$ completes the proof.
\end{proof}
\begin{lem}\label{R-rela-lem4.4}
Assume Proposition \ref{prop:4.4} is shown. Then as $N\to\infty$,
$\Psi_R^N(t,u)$ converges weakly on the space $D([0,T], D(\R_+))$ to
\[
\Psi_R(t,u) =\frac{\Phi(t,u)}{\b \om(t,u)}.
\]
Moreover, the limit $\Psi_R(t,u)$ is in
$C([0,T],C(\R_+))$ (a.\,s.) and satisfies the SPDE \eqref{eq:SPDE-2}.
\end{lem}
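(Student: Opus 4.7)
The plan is to transfer the weak convergence $\Phi^N \Rightarrow \Phi$ from Corollary \ref{prop:3.1} to $\Psi_R^N \Rightarrow \Psi_R$ by inverting the Hopf--Cole relation at the microscopic level, and then to derive \eqref{eq:SPDE-2} from \eqref{eq:3.2} by a deterministic change of unknown. The identity \eqref{eq:4.inter} rewrites as $\tilde\psi_R^N(t,u) = -(N\log\e)^{-1}\log\tilde\zeta^N(t,u) + O(1/N)$, while on the macroscopic side $\psi_R(t,u) = \b^{-1}\log\om(t,u)$ because $\om(t,u) = e^{\b\psi_R(t,u)}$ solves \eqref{eq:3.1} with initial data $e^{\b\psi_{R,0}}$. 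Subtracting these two expressions and multiplying by $\sqrt N$ yields, via the mean-value theorem applied to $\log$,
\[
\Psi_R^N(t,u) = \frac{\Phi^N(t,u)}{(-N\log\e)\,\om^*_N(t,u)} + \frac{\sqrt N(\b+N\log\e)\log\om(t,u)}{-\b N\log\e} + O(1/\sqrt N),
\]
with $\om^*_N(t,u)$ lying between $\tilde\zeta^N(t,u)$ and $\om(t,u)$.

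The second summand is $O(\log N/\sqrt N)$ locally uniformly in $(t,u)$, because $\b+N\log\e = O(\log N/N)$ from the expansion $\e_R(N) = 1-\b/N+O(\log N/N^2)$ recalled in Section~\ref{section-2}. For the first summand, $-N\log\e\to\b$, and the hydrodynamic limit $\tilde\zeta^N\to\om$ from Theorem 5.2 of \cite{FS} combined with the elementary bounds $\tilde\zeta^N\ge 1$ and $\om\ge\om(t,\infty)=1$ give the locally uniform convergence $\om^*_N\to\om$ with a uniform lower bound on $[0,T]\times K$ for each compact $K\subset\R_+$. Corollary \ref{prop:3.1} and the continuous-mapping theorem therefore yield
\[
\Psi_R^N \Longrightarrow \Phi/(\b\om) =: \Psi_R
\]
weakly on $D([0,T],D(\R_+))$, with $\Psi_R\in C([0,T],C(\R_+))$ a.s.\ inherited from the continuity of $\Phi$ and $\om$.

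For the SPDE, because $\om$ is deterministic and solves $\partial_t\om=\om''+\b\om'$ with $2\om'(t,0)+\b\om(t,0)=0$, the product rule applied to $\Phi=\b\om\Psi_R$ reduces to the classical chain rule and yields $d\Psi_R = (\b\om)^{-1}(d\Phi - \b\Psi_R\,d\om)$. Substituting \eqref{eq:3.2} and using $\om'/\om = \b\psi_R' = -\b\rho_R$ together with the algebraic identity $\Phi''+\b\Phi'-\b\Psi_R(\om''+\b\om') = \b\om\Psi_R''+\b(2\om'+\b\om)\Psi_R'$ gives
\[
\partial_t\Psi_R = \Psi_R'' + \b(1-2\rho_R)\Psi_R' + \sqrt{2\rho_R(1-\rho_R)}\,\dot W,
\]
which is \eqref{eq:SPDE-2}; the rigorous weak formulation is obtained by testing \eqref{eq:3.2} against $f/(\b\om)$ for $f\in C_0^{1,2}([0,T]\times\R_+)$ with $f'(t,0)=0$. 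For the boundary condition, the Robin identity $2\Phi'(t,0)+\b\Phi(t,0)=0$ combined with its analogue for $\om$ collapses to $2\b\om(t,0)\Psi_R'(t,0)=0$, whence $\Psi_R'(t,0)=0$. The main obstacle is the locally uniform convergence in probability of $\om^*_N$ to $\om$, which is needed to pass to the limit in the denominator of the linearisation; this requires strengthening the pointwise hydrodynamic convergence of Theorem 5.2 of \cite{FS} to a locally uniform one in $(t,u)$, after which the remaining steps are continuous-mapping and deterministic chain-rule computations.
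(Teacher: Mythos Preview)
Your argument is correct and follows essentially the same route as the paper: invert the microscopic Hopf--Cole relation by expanding the logarithm, then derive \eqref{eq:4.5} by testing the weak form of \eqref{eq:3.2} against $g=f/(\b\om)$. One simplification the paper makes: the locally uniform convergence $\tilde\zeta^N\to\om$ (hence $\om_N^*\to\om$) is not a separate obstacle requiring a strengthened hydrodynamic limit from \cite{FS}---it follows immediately from the fluctuation convergence $\Phi^N\Rightarrow\Phi$ in $D([0,T],C(\R_+))$ itself via Skorohod's representation theorem, since $\tilde\zeta^N=\om+\Phi^N/\sqrt N$. You should also verify explicitly that $g=f/(\b\om)$ satisfies the Robin condition $2g'(t,0)-\b g(t,0)=0$ needed to invoke \eqref{eq:3.2}; this is a one-line check using $f'(t,0)=0$ and $2\om'(t,0)+\b\om(t,0)=0$.
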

\begin{proof}
From Lemma \ref{WASEP-Ini}-(3) the relation $\Psi_{R,0}(u)=\Phi_0(u)/(\b
\om(0, u))$ is known. Due to Skorohod's representation theorem we may assume
that $\Phi^N(t,u)$ converges to $\Phi(t,u)$ uniformly on $[0,T]\times
[0,K]$ for every $K>0$ (a.\,s.) on a properly changed probability space. The
definitions of $\tilde{\zeta}^N(t,u)$ and $\tilde{\psi}_R^N(t,u)$
correspond to
\[
\log \tilde{\zeta}^N(t,u) = -(N\log\e) \tilde{\psi}_R^N \bigl(t,
\tfrac{[Nu]}{N} \bigr)
- (\log\e) r^N(t,u),
\]
where $r^N(t,u) = 1_{\{u\ge \frac1{N}\}} ([Nu]+1-Nu)\eta_t^N([Nu]) \in [0,1]$.
Since $\e = 1-\frac{\b}{N} + O(\frac{\log N}{N^2})$, we have that
\[
\tilde{\psi}_R^N \bigl(t,\tfrac{[Nu]}{N} \bigr) = \tfrac{1}{\b} \{1+O
\bigl( \tfrac{\log N}{N} \bigr)\}
\log \tilde{\zeta}^N(t,u) + O\bigl(\tfrac1N \bigr),
\]
with an error $O(\frac1N)$ which is uniform in $(t,u)$. On the other hand,
we know
\[
\psi_R(t,u) = \tfrac{1}{\b} \log \om(t,u)
\]
and $\inf_{t, u}\om(t,u)\ge 1$, see p. 354 in  \cite{FS}. Thus, we estimate the difference between $\Phi^N(t,u)$ and
$\Phi^N(t,[Nu]/N)$ and due to the uniform convergence of $\Phi^N(t, \cdot)$ to $\Phi(t,\cdot)$, arrive at
\[
\Psi_R^N(t,u) = \sqrt{N} \Biggl[ \frac1{\b} \Bigl(1
  +O\bigl(\tfrac{\log N}N\bigr)\Bigr)
\log \Bigl( \om(t,u)+ \tfrac{\Phi^N(t,u)}{\sqrt{N}} \Bigr)
+ o\Bigl(\tfrac1{\sqrt{N}}\Bigr)
- \frac1{\b} \log \om(t,u) \Biggr],
\]
which concludes the proof of the first part.

To complete the proof, it is enough to check the weak form \eqref{eq:4.5}
of the SPDE \eqref{eq:SPDE-2} for $f \in
C^{1,2}_0([0,T]\times\R_+)$ satisfying $f'(t,0)=0$.  For such $f$,
set $g(t,u) =f(t,u)/(\b \om(t,u))$. Then, we easily see that $g$ satisfies
the condition: $2g'(t,0)-\b\,g(t,0)=0$ and, if we consider a weak solution
of \eqref{eq:3.2} for such $g$, a simple computation
leads to \eqref{eq:4.5} for $f$.
\end{proof}

\subsection{Proof of Proposition \ref{prop:4.4}}\label{subsection4.3}
Subsection \ref{subsec-4.3.1} concerns an important uniform estimate on
${\zeta}_t^N$. 
We then formulate some lemmas for the proof of the
tightness of  $\{\bar{\Phi}^N(t,u)\}_N$ in Subsection  \ref{subsec-4.3.2}
and finally give the proof of
Proposition \ref{prop:4.4} in Subsection \ref{subsec-4.3.3}. To show the
tightness of $\{\bar{\Phi}^N(t,u)\}_N$
on the space $D([0,T],C(\R))$, we mainly mimic the approaches used in
\cite{BG} and \cite{DG}.
\subsubsection{A Uniform Estimate}\label{subsec-4.3.1}
The following lemma is crucial and will be frequently used in the sequel.
\begin{lem}\label{Lem4.8}
Let $\k \in \N$ as above. Under  Assumption 2-(1)(i), for any $T>0$, we have
\[
\sup_{N\in \N}E \Bigl[ \sup_{s\in[0,T]} \zeta_s^N(1)^{2\k } \Bigr] <
\infty.
\]
\end{lem}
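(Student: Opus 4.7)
The strategy is to exploit the linear SDE satisfied by the extended Hopf-Cole transformed process $\bar\zeta_t^N$ on $\Z$ from Lemma \ref{lem:4.3}, where the reservoir boundary at $\{0\}$ has been eliminated by reflection. Since $\zeta_t^N(1)=\e^{1/2}\bar\zeta_t^N(1)$, it is enough to bound $E[\sup_{s\le T}\bar\zeta_s^N(1)^{2\k}]$ uniformly in $N$. Note first that Assumption 2-(1)(i) translates into $\sup_N E[\bar\zeta_0^N(1)^{2\k}]<\infty$ for every $\k$ via the identity $\bar\zeta_0^N(1)=\e^{-1/2}\exp\{-(N\log\e)\tilde\psi_R^N(0,0)\}$ together with $-N\log\e\to\b$; this is essentially the initial estimate Lemma \ref{WASEP-Ini}(1)(i), which is proved later using the same identity.

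I would write $\bar\zeta_t^N$ in its mild form
\begin{equation*}
\bar\zeta_t^N(x)=e^{c_Nt}\sum_{y\in\Z}p^N(t,x-y)\bar\zeta_0^N(y)+\int_0^t e^{c_N(t-s)}\sum_{y\in\Z}p^N(t-s,x-y)\,d\bar M_s^N(y)=:D_t(x)+S_t(x),
\end{equation*}
and treat the two parts separately. For $D_t(1)$, the monotonicity of $\zeta_0^N(\cdot)$ on $\N$ together with the reflection $\bar\zeta_0^N(x)=\bar\zeta_0^N(2-x)$ for $x\le 0$ gives $\bar\zeta_0^N(y)\le Ce^{C|y|/N}\bar\zeta_0^N(1)$ for all $y\in\Z$; combining this with $c_N\le 0$ for $N$ large and the random-walk moment-generating estimate $\sum_y p(N^2\e^{1/2}t,1-y)e^{C|y|/N}\le(\cosh(C/N))^{N^2\e^{1/2}T}\le C_T$, one gets the pathwise bound $\sup_{t\le T}D_t(1)\le C_T\bar\zeta_0^N(1)$, hence $E[\sup_{t\le T}D_t(1)^{2\k}]\le C_T$. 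For the stochastic convolution $S_t(1)$, apply It\^o's formula to $\bar\zeta_t^N(x)^{2\k}$; using \eqref{eq:4.SDE-1-1} with $d\langle\bar M^N(x)\rangle_t\le C\bar\zeta_t^N(x)^2 dt$, together with the Young-type inequality $2\k\bar\zeta^{2\k-1}(x)\De\bar\zeta(x)\le\De\bar\zeta^{2\k}(x)$, the drift is dominated by $N^2\e^{1/2}\De\bar\zeta^{2\k}(x)+C_\k\bar\zeta^{2\k}(x)$. Multiplying by $e^{-\la|x|/N}$ with $\la$ chosen slightly larger than the exponential growth rate appearing in Lemma \ref{WASEP-Ini}(1)(ii), summing in $x$, and exploiting $\De e^{-\la|x|/N}\le(2\cosh(\la/N)-2)e^{-\la|x|/N}$ with $N^2(2\cosh(\la/N)-2)\le C\la^2$, one arrives at the weighted-norm inequality $dU_t\le CU_t\,dt+d\tilde M_t$ for $U_t:=\sum_x e^{-\la|x|/N}\bar\zeta_t^N(x)^{2\k}$. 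A Gr\"onwall argument on $e^{-Ct}U_t$ together with Doob's inequality controls $E[\sup_{t\le T}U_t]$, and the bound $\bar\zeta_t^N(1)^{2\k}\le e^{\la/N}U_t\le CU_t$ yields the desired estimate.

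The principal obstacle is closing the estimate at the $L^{2\k}$ level for $E[\sup_t\cdot]$ rather than for $\sup_tE[\cdot]$, because the quadratic variation of the martingale appearing in the It\^o expansion of $\bar\zeta^{2\k}$ is of order $4\k$ in $\bar\zeta$, one full moment order higher. I would resolve this by the standard two-step device: first prove the easier pointwise-in-$t$ bound $\sup_{t\le T}E[\bar\zeta_t^N(x)^{4\k}]\le Ce^{C|x|/N}$ by the linear parabolic comparison $\partial_t v\le N^2\e^{1/2}\De v+Cv$ applied to $v_t(x)=E[\bar\zeta_t^N(x)^{4\k}]$ and combined with the random-walk MGF estimate above, and then feed this a priori fourth-moment control into the Burkholder-Davis-Gundy inequality applied to $S_t(1)$ to obtain $E[\sup_{t\le T}|S_t(1)|^{2\k}]\le C_T$. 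This mirrors the approach of Bertini-Giacomin \cite{BG}, adapted here to the reflected process on $\Z$.
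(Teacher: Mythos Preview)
Your route through the reflected linear SDE for $\bar\zeta_t^N$ on $\Z$ is genuinely different from the paper's proof, and both sub-approaches you sketch contain gaps that prevent closure.

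\textbf{Approach (a) (weighted sum $U_t$).} The functional $U_t=\sum_{x\in\Z}e^{-\la|x|/N}\bar\zeta_t^N(x)^{2\k}$ is \emph{never} bounded uniformly in $N$: since $\zeta_0^N(x)\ge 1$ one has $\bar\zeta_0^N(x)\ge \e^{-|x-1|/2}\ge 1$ for all $x$, hence
\[
U_0\ \ge\ \sum_{|x|\le N}e^{-\la|x|/N}\ \ge\ cN
\]
deterministically. So even if the differential inequality $dU_t\le CU_t\,dt+d\tilde M_t$ is correct, Gr\"onwall together with Doob can at best give $E[\sup_{t\le T}U_t]\le C'N$, which yields nothing for $\bar\zeta_t^N(1)^{2\k}\le CU_t$. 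The summation-by-parts trick does absorb the $O(N)$ drift, but it replaces it with an $O(N)$ initial value; there are simply too many terms in the sum, each bounded below by a positive constant.

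\textbf{Approach (b) (two-step BDG).} The pointwise bound $\sup_{t\le T}E[\bar\zeta_t^N(x)^{4\k}]\le Ce^{C|x|/N}$ is plausible (with due care for the jump-process It\^o formula). But $S_t(1)$ is a stochastic \emph{convolution}, not a martingale in $t$, so Burkholder--Davis--Gundy does not directly control $E[\sup_{t\le T}|S_t(1)|^{2\k}]$; it only gives bounds pointwise in $t$ via the standard freeze-$t$ trick. To upgrade to a supremum one would need either a factorization/Kolmogorov argument (including control of jumps between grid points) or a maximal inequality for stochastic convolutions, none of which you invoke.

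\textbf{What the paper does instead.} The paper abandons $\bar\zeta_t^N(x)$ altogether and works with a single scalar functional
\[
\Pi_t^N(\fa)=\exp\Bigl\{-\k(\log\e)\sum_{x\in\N}\eta_t^N(x)\,\fa\bigl(\tfrac xN\bigr)\Bigr\},
\]
for a fixed smooth $\fa:\R_+\to[0,1]$ with $\fa\equiv 0$ on $(0,1]$ and $\fa\equiv 1$ on $[2,\infty)$. Because $1-\fa$ is supported on $[0,2]$, the ratio $\zeta_s^N(1)^\k/\Pi_s^N(\fa)$ is bounded uniformly in $N,s$. The smoothness of $\fa$ on the macroscopic scale is exactly what your weighted-sum approach lacks: it makes the discrete gradients $\fa((x+1)/N)-\fa(x/N)=O(1/N)$, so a direct generator computation yields
\[
\bar L^N\Pi_s^N(\fa)\le C_1\Pi_s^N(\fa)\qquad\text{and}\qquad d\langle M^N(\fa)\rangle_t\le \tfrac{C_2}{N}\,\Pi_t^N(\fa)^2\,dt.
\]
The crucial extra factor $1/N$ in the carr\'e du champ closes Gr\"onwall plus Doob at the $L^2$ level with \emph{no} moment escalation: from $\Pi_t\le e^{C_1t}(\Pi_0+\sup_s|M_s|)$ one gets
\[
E\bigl[\sup_{t\le T}\Pi_t^2\bigr]\le Ce^{2C_1T}\Bigl(E[\Pi_0^2]+\tfrac{C}{N}\int_0^TE[\Pi_s^2]\,ds\Bigr),
\]
and a second application of Gr\"onwall finishes. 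The single exponential functional $\Pi$ sidesteps both of your obstacles at once: there is no sum over sites to diverge with $N$, and one never leaves the class of genuine martingales.
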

\begin{proof}
One can modify the proof of  Proposition 5.4 in \cite{FS}.
Let $\fa  \in C_b^2(\R_+)$ such that
$\fa ' \geq 0,\  \fa (u)=0 $ for $u\in (0,  1]$ and
$\fa (u)=1$ for $u\geq 2$, and for each $\k$ set
\[\label{Lem4.8-1}
\Pi_t^N(\fa ):=\exp \Bigl\{-\k  (\log \e ) \sum_{x\in
\mathbb{N}} \eta_t^N(x)\fa  \bigl(\tfrac{x}{N} \bigr) \Bigr\}.
\]
Since
$\sup_{N\in \mathbb{N}, s\in [0, T]}{\zeta_s^N(1)^\k}/{\Pi_s^N(\fa )} < \infty,$
it is enough to show that
\begin{equation}\label{Lem4.8-3}
\sup_{N\in \mathbb{N}}E\Bigl[\sup_{s\in [0, T]}\Pi_s^N(\fa )^2
\Bigr]< \infty.
\end{equation}
Consider the martingale
$M_t^N(\fa )$ given by
\begin{equation}\label{Lem4.8-4}
M_t^N(\fa )=\Pi_t^N(\fa ) -\Pi_0^N(\fa ) -\int_0^t
\bar{L}^N\Pi_s^N(\fa )ds,
\end{equation}
where $\bar{L}^N=N^2\bar{L}_{\e (N), R}$. Here $\bar{L}_{\e , R}$ is the
generator described at p. 353 of \cite{FS}, so that
\begin{align*}\label{Lem4.8-5}
\bar{L}^N\Pi_s^N(\fa ) =& N^2 \Pi_s^N(\fa )\sum_{x\in
\mathbb{N}} \Bigl(\e  c_+(x, \eta_s^N)+
c_-(x,\eta_s^N) \Bigr) \notag\\
\times&\Bigl[\exp \Bigl\{-\k ( \log \e )
\Bigl( \fa  \bigl(\tfrac{x+1}{N} \bigr)-\fa  \bigl(\tfrac{x}{N} \bigr)
\Bigr)(\eta_s^N(x)-\eta_s^N(x+1))
\Bigr\}-1 \Bigr].
\end{align*}
By simple calculations the quadratic
variation  of $M_t^N(\fa )$ is
given by the following relation:
\begin{align*}
\frac{d}{dt}\lan M^N(\fa ) \ran_t &= N^2
\Pi_s^N(\fa )^2\sum_{x\in \mathbb{N}} \Bigl(\e  c_+(x,
\eta_s^N)+
c_-(x, \eta_s^N) \Bigr) \notag\\
&\times\Bigl[\exp \Bigl\{-2\k (\log \e )
\Bigl(\fa  \bigl(\tfrac{x+1}{N} \bigr)-\fa  \bigl(\tfrac{x}{N} \bigr)
\Bigr)\bigl(\eta_s^N(x)-\eta_s^N(x+1)\bigr)
\Bigr\}\Big. \notag \\
\Big. & -2\exp\Bigl\{-\k (\log \e )
\Bigl(\fa  \bigl(\tfrac{x+1}{N} \bigr)-\fa  \bigl(\tfrac{x}{N}
\bigr)\Bigr)
\bigl(\eta_s^N(x)-\eta_s^N(x+1) \bigr) \Bigr\} +1 \Bigr].
\end{align*}
We first claim that  there exists
$C_1=C_1(\k, \|\fa ' \|_{\infty},\| \fa ''
\|_{\infty})>0$ such that
\begin{equation}\label{Lem4.8-7}
\bar{L}^N\Pi_s^N(\fa )\leq C_1\Pi_s^N(\fa ).
\end{equation}
In fact, note that $c_-(x, \eta)-c_+(x, \eta)=\eta(x+1)
-\eta(x) $ and after rearranging the sum, we can rewrite
$\bar{L}^N\Pi_s^N(\fa )$ as follows:
\begin{align*} 
&\bar{L}^N  \Pi_s^N(\fa )= N^2
\Pi_s^N(\fa )   \notag\\
\times& \Bigl[\sum_{x\in \mathbb{N}} \eta_s^N(x)\Bigl(\exp
\Bigl\{-\k (\log
\e ) \bigl(\fa  \bigl(\tfrac{x+1}{N} \bigr)-\fa  \bigl(\tfrac{x}{N}
\bigr) \bigr)\Bigr\}-\exp \Bigl\{-\k (
\log
\e )\bigl(\fa  \bigl(\tfrac{x}{N}\bigr)-\fa  \bigl(\tfrac{x-1}{N}
\bigr)\bigr)\Bigr\}\Bigr)\Bigr.
\notag\\
+& \sum_{x \in \mathbb{N}} c_-(x, \eta_s^N)\Bigl(\exp \Bigl\{-\k ( \log
\e ) \bigl(\fa  \bigl(\tfrac{x+1}{N} \bigr)-\fa  \bigl(\tfrac{x}{N}
\bigr)\bigr)\Bigr\}+\exp \Bigl\{-\k
(\log \e ) \bigl(\fa  \bigl(\tfrac{x+1}{N} \bigr)-\fa  \bigl(
\tfrac{x}{N}\bigr) \bigr) \Bigr\}\Bigr)
\notag\\
-&   \Bigl.2\sum_{x \in \mathbb{N}} c_-(x, \eta_s^N) - \sum_{x\in
\mathbb{N}} (1-\e ) c_+(x,
\eta_s^N)\Bigl(\exp \Bigl\{-\k (\log
\e ) \bigl(\fa  \bigl(\tfrac{x+1}{N} \bigr)-\fa  \bigl(\tfrac{x}{N}
\bigr)\bigr) \Bigr\} -1 \Bigr)
\Bigr].
\end{align*}
Thus, by Taylor's formula and Lemma 3.2 of \cite{FS}, we can
show \eqref{Lem4.8-7}. As a consequence of  \eqref{Lem4.8-4},
\eqref{Lem4.8-7} and Gronwall's inequality, we have that
\[
\Pi_t^N(\fa ) \leq e^{C_1t}\Bigl(\Pi_0^N(\fa )+\sup_{s\in [0,
t]}|M_s^N(\fa )|\Bigr), \ t\leq T,
\]
 which implies
\begin{equation}\label{Lem4.8-10}
E\Bigl[\sup_{s\in [0, t]}\Pi_s^N(\fa )^2 \Bigr] \leq 2e^{2C_1
t}\Bigl(E\Bigl[\Pi_0^N(\fa )^2 \Bigr] + E\Bigl[\sup_{s\in [0,
t]} M_s^N(\fa )^2 \Bigr] \Bigr), \ t\leq T.
\end{equation}
A similar approach to  \eqref{Lem4.8-7} yields that  there exists  
$C_2=C_2(\k, \|\fa ' \|_{\infty})>0$ such that
\[
\lan M^N(\fa ) \ran_t \leq \frac{C_2}{N}\int_0^t
\Pi_s^N(\fa )^2ds 
\]
and therefore Doob's inequality implies
\[
E\Bigl[\sup_{s\in [0, t]}M_s^N(\fa )^2\Bigr] \leq
\frac{4C_2}{N}E \Bigl[ \int_0^t \Pi_s^N(\fa )^2ds
\Bigr], \ t\leq T.
\]
In the end,  Gronwall's inequality, \eqref{Lem4.8-10} and  Lemma
\ref{WASEP-Ini} conclude the proof of \eqref{Lem4.8-3}.
\end{proof}
\subsubsection{Tightness of $\bar{\Phi}^N(t,u)$}\label{subsec-4.3.2}
A criterion for the tightness of on the space $D([0,T],C(\R))$ is given by
the theorem due to Aldous and Kurtz (see \cite{EK}, \cite[Theorem 2.7]{K}
or \cite[Proposition 4.9]{BG}). It states that it is sufficient to show the
following estimates:
\begin{enumerate}
\item For every $t, K>0$, there exist $ \k \ge 1$, $C$ and
$\a>0$ such that
\begin{align*}
& \sup_N E \bigl[|\bar{\Phi}^N(t,0)|^\k \bigr] < \infty, \\
& \sup_N E \bigl[|\bar{\Phi}^N(t,u_1) - \bar{\Phi}^N(t,u_2)|^\k \bigr]
\leq C|u_1-u_2|^{1+\a}, \quad |u_1|, |u_2| \leq K.
\end{align*}
\item There exists a process $A^N(\de), \de >0$ such that
\begin{align*}
&
E \bigl[d(\bar{\Phi}^N(t+\de,\cdot),\bar{\Phi}^N(t,\cdot)) \big|
\mathcal{F}_t \bigr] \leq E \bigl[A^N(\de) \big|\mathcal{F}_t\bigr],\
t\in [0, T], \\
& \lim_{\de\downarrow 0}\limsup_{N\to\infty} E \bigl[A^N(\de)\bigr]=0.
\end{align*}
\end{enumerate}
Here $\mathcal{F}_t= \si\{\bar{\Phi}^N(s, \cdot); 0\leq s\leq
t\}$ and  $d(\cdot, \cdot)$ denotes a metric on $C(\R)$ which determines
the topology of the uniform convergence on each compact subset of
$\R$.

Before we go to our main topic of this subsection, let us state Burkholder's
inequality according to Theorem 7.11 of \cite{WAL}.
\begin{lem}\label{BurkIneq}
For any $L^{2\k }$-integrable  real valued martingale $M_t$ and
fixed $t>0$, there exists a constant $C=C(\k, t)>0$ such that
\[
E\Bigl[\sup_{s\in [0, t]}|M_s|^{2\k } \Bigr]\leq CE \bigl[\lan M
\ran_t^\k \bigr] +CE \Bigl[\sup_{s\in [0, t]}|M_s-M_{s-} |^{2\k }
\Bigr],
\]
where $\lan M \ran_t$ denotes the  quadratic
variational process of $M_t$.
\end{lem}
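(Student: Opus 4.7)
The plan is to derive the inequality from the Burkholder-Davis-Gundy (BDG) theorem for c\`{a}dl\`{a}g martingales; since this is a standard result (Theorem 7.11 of \cite{WAL}), a direct citation is what the paper should rely on, but I sketch the underlying derivation below. First I would apply the classical BDG inequality
\[
E\Bigl[\sup_{s\in[0,t]}|M_s|^{2\k}\Bigr] \le C_\k\, E\bigl[[M]_t^\k\bigr],
\]
where $[M]_t$ is the optional (square-bracket) quadratic variation. For a c\`{a}dl\`{a}g martingale one has the decomposition $[M]_t = \lan M\ran_t + N_t$, with $N$ a purely discontinuous martingale whose jumps are $(\De M_s)^2$ (minus the compensator). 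Convexity of $x\mapsto x^\k$ then gives
\[
E\bigl[[M]_t^\k\bigr] \le 2^{\k-1} \bigl(E[\lan M\ran_t^\k] + E[|N_t|^\k]\bigr).
\]

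Second, I would apply BDG once more to the martingale $N$, obtaining $E[|N_t|^\k] \le C E\bigl[\bigl(\sum_{s\le t}(\De M_s)^4\bigr)^{\k/2}\bigr]$. Using the pointwise inequality $\sum_{s\le t}(\De M_s)^4 \le \sup_{s\le t}|\De M_s|^2 \cdot [M]_t$, this becomes $C\, E\bigl[\sup_{s\le t}|\De M_s|^\k \cdot [M]_t^{\k/2}\bigr]$. A Young inequality with a small coefficient $\e >0$,
\[
\sup_{s\le t}|\De M_s|^\k \cdot [M]_t^{\k/2} \le \tfrac{1}{2\e}\sup_{s\le t}|\De M_s|^{2\k} + \tfrac{\e}{2}\,[M]_t^\k,
\]
lets me absorb the residual $[M]_t^\k$ contribution back into the left hand side of the previous display, producing the claimed bound with a constant $C=C(\k,t)$.

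The main subtlety is the absorption step: it requires that $E[[M]_t^\k]$ be finite a priori, which follows from the $L^{2\k}$-integrability hypothesis on $M$. A cleaner alternative that bypasses this iteration is to apply It\^o's formula directly to $f(M)=|M|^{2\k}$. The continuous part together with the compensator of the jumps yields, via $f''$, the $\lan M\ran_t^\k$ contribution, while the pointwise elementary bound
\[
\bigl|f(x+y) - f(x) - f'(x)y\bigr| \le C\bigl(|x|^{2\k-2}y^2 + |y|^{2\k}\bigr)
\]
on each jump produces the $\sup|\De M|^{2\k}$ contribution directly, without any absorption gymnastics. Since the lemma is invoked only as a black-box tool in the forthcoming tightness argument for $\bar{\Phi}^N$, a direct citation to Walsh is the natural choice and no further argument is needed in the paper.
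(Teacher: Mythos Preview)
The paper does not prove this lemma; it simply records it as Theorem 7.11 of \cite{WAL}, exactly as you anticipated, so your recommendation to cite directly matches the paper's treatment. Your supplementary sketch is sound in outline but goes beyond anything the paper contains.
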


As further preparations, we formulate some estimates for
$(\bar{\Phi}_t^N(x))_{x\in \Z}$ defined by \eqref{eq:4.19}.
Let us first summarize some properties of $p(t,x)$ given by
\eqref{heatker}, see \cite{BG,G} for their proofs.
\begin{lem}\label{Ker-est}
There exists a constant $C>0$ such that the following
holds:
\\
$({\rm i})$ For any $t>0$,  $\sup_{x\in \Z}p(t,x) \leq C t^{-\frac12},$ \\
$({\rm ii})$ For any $\alpha \leq 1/2, \quad |p(t,x)-p(t,y)| \leq C
|x-y|^{2\alpha}t^{-\frac12-\alpha},\ x, y \in \Z, $
$$ |p(t+h,x)-p(t,x)| \leq  C h^{2\alpha}t^{-\frac12-2\alpha},\ x \in \Z,\ h>0.$$
\end{lem}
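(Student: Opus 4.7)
The plan is to prove these estimates using the Fourier (characteristic function) representation of the discrete heat kernel. Since $p(t,x)$ is the transition probability at time $t$ of the continuous-time simple random walk on $\mathbb{Z}$ with jump rate $2$, or equivalently the solution of \eqref{heatker}, we have the explicit formula
\[
p(t,x) = \frac{1}{2\pi}\int_{-\pi}^{\pi} e^{-i\theta x}\, e^{-\lambda(\theta)t}\, d\theta, \qquad \lambda(\theta) := 2(1-\cos\theta),
\]
together with the lower bound $\lambda(\theta) \ge c\theta^2$ on $[-\pi,\pi]$ for some $c>0$.

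For (i), I would simply bound the modulus inside the integral, then use the Gaussian-type bound $e^{-\lambda(\theta)t} \le e^{-c\theta^2 t}$ and the substitution $\theta = s/\sqrt{t}$ to obtain
\[
\sup_{x\in\Z} |p(t,x)| \le \frac{1}{2\pi}\int_{-\pi}^{\pi} e^{-c\theta^2 t}\, d\theta \le Ct^{-1/2}.
\]

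For the spatial H\"older estimate in (ii), I would start from
\[
|p(t,x)-p(t,y)| \le \frac{1}{2\pi}\int_{-\pi}^{\pi} |e^{-i\theta x}-e^{-i\theta y}|\, e^{-\lambda(\theta) t}\, d\theta,
\]
and use the interpolation $|e^{-i\theta x}-e^{-i\theta y}| \le \min(2,|\theta||x-y|) \le C|\theta|^{2\alpha}|x-y|^{2\alpha}$ valid for $\alpha\in[0,1/2]$. After the same scaling $\theta = s/\sqrt{t}$, the factor $|\theta|^{2\alpha}$ contributes an extra $t^{-\alpha}$, giving the claimed $|x-y|^{2\alpha}t^{-1/2-\alpha}$. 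For the time H\"older estimate, I would write
\[
|p(t+h,x)-p(t,x)| \le \frac{1}{2\pi}\int_{-\pi}^{\pi} e^{-\lambda(\theta)t}\bigl|1-e^{-\lambda(\theta)h}\bigr|\, d\theta,
\]
and exploit $|1-e^{-\lambda(\theta)h}|\le\min(1,\lambda(\theta)h) \le C(\lambda(\theta)h)^{2\alpha}\le Ch^{2\alpha}\theta^{4\alpha}$. The scaling argument then yields the factor $t^{-1/2-2\alpha}$.

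The proofs are entirely standard (this is the local central limit theorem together with Gaussian moment bounds for the discrete heat kernel), and indeed the paper cites \cite{BG,G} precisely for these estimates; the only technical care needed is in tracking the interpolation constant so that the range $\alpha\le 1/2$ is admissible uniformly. Accordingly I would simply record the formula above, state the three-line Fourier calculation, and refer to \cite{BG,G} for the routine details rather than reproduce them.
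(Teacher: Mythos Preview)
Your proposal is correct, and in fact more detailed than what the paper does: the paper gives no proof at all for this lemma and simply writes ``see \cite{BG,G} for their proofs.'' Your Fourier-analytic sketch via $p(t,x)=\frac{1}{2\pi}\int_{-\pi}^{\pi}e^{-i\theta x}e^{-\lambda(\theta)t}\,d\theta$ with $\lambda(\theta)=2(1-\cos\theta)\ge c\theta^2$, followed by the interpolation bounds and the scaling $\theta=s/\sqrt{t}$, is precisely the standard argument used in those references, so your approach coincides with what the paper defers to.
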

\begin{lem}\label{Est-mon}
Under Assumption 2,  there exist  $C$
and $\tilde {\k}>0$ such that
\[
E \bigl[|\bar{\Phi}_t^N(x)|^{2\k } \bigr]\leq C e^{\frac{\tilde{\k}\b
|x|}{N}} \Bigl(1+ t^{\frac{\k}{2}}+ \bigl(\sqrt{N}(\e^{-1}
-1)\bigr)^{2\k } \Bigr),\ t\in [0, T].
\]
\end{lem}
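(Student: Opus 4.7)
The plan is to start from the mild formulation \eqref{barphi-2} and decompose $\bar{\Phi}_t^N(x) = I^N(t,x) + II^N(t,x)$, where $I^N$ is the propagation of the initial datum and $II^N$ is the stochastic convolution against the orthogonal family $\{\bar{M}^N(y)\}_{y\in\Z}$. For $I^N$, since $p^N(t,x,\cdot)$ is a probability kernel on $\Z$ and $e^{c_N t}$ is uniformly bounded on $[0,T]$, Jensen's inequality yields $|I^N(t,x)|^{2\k}\leq C\sum_y p^N(t,x,y)|\bar{\Phi}_0^N(y)|^{2\k}$. Applying Lemma \ref{WASEP-Ini}(1)(ii) and the elementary splitting $|y|\leq|x|+|y-x|$, the resulting sum factors as $Ce^{\tilde{\k}\b|x|/N}$ times a uniformly bounded exponential moment of the underlying rescaled random walk at rate $\b/N$ (bounded because the step size is $1$ and $(\b/N)^2\cdot N^2 t\leq\b^2 T$).

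For $II^N(t,x)$, for each fixed $t$ I would view $r\mapsto\int_0^r\sum_y p^N(t-s,x,y)e^{c_N(t-s)}\sqrt{N}\,d\bar{M}_s^N(y)$ as a c\`adl\`ag martingale in $r$. Burkholder's inequality (Lemma \ref{BurkIneq}) evaluated at $r=t$ bounds $E[|II^N(t,x)|^{2\k}]$ by $CE[\langle II^N(\cdot,x)\rangle_t^{\k}]+CE[\sup_{s\leq t}|\Delta II^N(s,x)|^{2\k}]$. Combining \eqref{eq:4.SDE-1-1} with the transformation $\bar{M}^N(x)=e^{-(\log\e)x/2}M^N(x)$ (and its even reflection for $x\leq 0$), the mutual orthogonality of $\{M^N(y)\}$ is preserved and $d\langle\bar{M}^N(y)\rangle_s\leq C\bar{\zeta}_s^N(y)^2\,ds$. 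The monotonicity $\zeta_s^N(y)\leq\zeta_s^N(1)$ for $y\geq 1$ implies $\bar{\zeta}_s^N(y)^2\leq e^{-(\log\e)|y|}\zeta_s^N(1)^2$, whence
\[
\langle II^N(\cdot,x)\rangle_t \leq CN\Bigl(\sup_{s\leq t}\zeta_s^N(1)^2\Bigr)\int_0^t\sum_y [p^N(t-s,x,y)]^2 e^{-(\log\e)|y|}\,ds.
\]
Splitting via $|y|\leq|x|+|y-x|$ and using the weighted $L^2$ bound $\sum_z [p^N(\sigma,z)]^2 e^{-(\log\e)|z|}\leq C(N\sqrt{\sigma})^{-1}$, which follows from Lemma \ref{Ker-est}(i) combined with $-\log\e\asymp\b/N$ matching the diffusive scaling $N^2\sigma$ inside $p^N$, the $s$-integral produces $C\sqrt{t}/N$ and cancels the prefactor $N$. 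Raising to the $\k$-th power and invoking Lemma \ref{Lem4.8} delivers the $t^{\k/2}\,e^{\tilde{\k}\b|x|/N}$ contribution.

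The jump contribution produces the $(\sqrt{N}(\e^{-1}-1))^{2\k}$ factor: because a single transition of $\eta^N$ changes $\zeta_t^N(y)$ by a multiplicative factor $\e^{\pm 1}$, the jump of $M^N(y)$ has magnitude of order $|\e^{-1}-1|\zeta_{s-}^N(y)$, so $\sqrt{N}|\Delta\bar{M}_s^N(y)|\leq C\sqrt{N}(\e^{-1}-1)\bar{\zeta}_{s-}^N(y)$. After multiplying by $p^N(t-s,x,y)e^{c_N(t-s)}\leq 1$ and applying the same monotonicity estimate for $\bar{\zeta}^N$ together with $|y|\leq|x|+|y-x|$, the $2\k$-th moment of the sup over jumps is bounded by $C(\sqrt{N}(\e^{-1}-1))^{2\k}e^{\tilde{\k}\b|x|/N}E[\sup_{s\leq t}\zeta_s^N(1)^{2\k}]$, which by Lemma \ref{Lem4.8} is uniformly finite in $N$.

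The main technical obstacle I anticipate is the uniform-in-$N$ control of the weighted heat-kernel estimate, together with the careful tracking of the exponential weights: one must exploit that $-\log\e = \b/N + O(\log N/N^2)$ is precisely tuned to the diffusive rescaling $N^2$ appearing in $p^N$, so that the weighted exponential moments stay bounded on $[0,T]$ and the $s$-integral produces exactly the cancellation $\sqrt{t}/N$. Once that balance is verified, the three contributions sum to the stated bound with a suitably enlarged $\tilde{\k}$.
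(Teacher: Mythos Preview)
Your plan is correct and mirrors the paper's proof almost exactly: the same mild-form decomposition, the same Burkholder inequality applied to the auxiliary martingale $r\mapsto\int_0^r\sum_y p^N(t-s,x,y)e^{c_N(t-s)}\sqrt{N}\,d\bar M_s^N(y)$, the same monotonicity bound $d\langle\bar M^N(y)\rangle_s\le Ce^{-(\log\e)|y|}\zeta_s^N(1)^2\,ds$, and the same jump estimate via $|\Delta\zeta_s^N(y)|\le(\e^{-1}-1)\zeta_{s-}^N(y)$. The only cosmetic difference is that the paper handles the exponential weights through the exact eigenfunction identity
\[
\sum_{y\in\Z}p^N(t,x,y)\,e^{ay}=e^{ax}\exp\bigl\{(e^a+e^{-a}-2)N^2\e^{1/2}t\bigr\},
\]
whereas you split $e^{-(\log\e)|y|}\le e^{-(\log\e)|x|}e^{-(\log\e)|y-x|}$ and then bound the exponential moment of the random walk increment; these are the same computation.

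One point to tighten in the jump term: writing ``multiplying by $p^N(t-s,x,y)e^{c_N(t-s)}\le 1$'' and then invoking $|y|\le|x|+|y-x|$ is not quite right, because once $p^N$ is replaced by $1$ you have no control on $e^{-(\log\e)|y-x|/2}$ for the (random) jump location $y$. What actually works---and what the paper does---is to keep the kernel, note that the jump bound $|\Delta\bar M_s^N(y)|\le C(\e^{-1}-1)e^{-(\log\e)|y|/2}\zeta_s^N(1)$ holds trivially for \emph{all} $y$ (being zero off the jump site), and then sum: $\sum_y p^N(t-s,x,y)e^{-(\log\e)|y|/2}\le Ce^{-(\log\e)|x|/2}$ by the same exponential-moment identity. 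With that correction your argument goes through verbatim.
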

\begin{proof}
We denote the first and second terms in the right hand side of
\eqref{barphi-2} by $I^{(N,1)}(t,x)$ and $I^{(N,2)}(t,x)$,
respectively. Then, Lemma \ref{WASEP-Ini} (1)(ii),
H\"{o}lder's inequality and the property $\sum_{y \in
\Z}p^N(t,x,y)=1$ result in
\[\label{mom-1}
E\bigl[|I^{(N,1)}(t,x)|^{2\k }\bigr] \leq  \sum_{y \in
\mathbb{Z}} p^N(t,x,y)e^{2\k  c_Nt}
E\bigl[|\bar{\Phi}_0^N(y)|^{2\k }\bigr]
\leq \sum_{y \in \mathbb{Z}} p^N(t,x,y)e^{2\k
c_Nt}Ce^{\frac{\k' \b  |y|}{N}}.
\]
On the other hand, since for each $a \in \R$, the
 function $e^{ax}$ is the eigenfunction of the
operator $ \De$ corresponding to  the eigenvalue $e^{a} +e^{-a}
-2$
\begin{equation}\label{mom-5}
\sum_{y \in \mathbb{Z}} p^N(t,x,y)e^{ay} =e^{ax}\exp\Bigl\{
\bigl(e^{a} +e^{-a} -2 \bigr)N^2\e^{1/2} t \Bigr\}
\end{equation}
holds. Note that $e^{\frac{\k' \b  |y|}{ N}} \leq e^{\frac{\k'
\b  y}{ N}} +e^{-\frac{\k' \b  y}{ N}}$  
and it follows that
\begin{equation}\label{mom-1-1}
E\bigl[|I^{(N,1)}(t,x)|^{2\k }\bigr] \leq Ce^{\frac{\k' \b
|x|}{N}}, \ t\in [0,T],
\end{equation}
by  the behavior of $c_N$ and the convergence of
$\bigl(e^{\frac{\k' \b }{ N}} +e^{-\frac{\k' \b }{ N}}
-2\bigr)N^2$ as $N \to \infty$.

In the following, we are going to  estimate
$E\bigl[|I^{(N,2)}(t,x)|^{2\k }\bigr]$ by using  Burkholder's
inequality stated in Lemma \ref{BurkIneq}. However, as a
stochastic process, it is well-known that $I^{(N,2)}(t,x)$
is not a martingale. Instead of the direct disposal of
$I^{(N,2)}(t,x)$, we will fix $t>0$ and consider the
process
\[
I_{t}^{(N,2)}(r,x)=\int_0^r\sum_{y \in \mathbb{Z}}
p^N(t-s,x,y)e^{c_N(t-s)}\sqrt{N}d\bar{M}_s^N(y), \ r<t,
\]
which is a real valued  martingale in $r$ for each $x \in
\mathbb{Z}$ with quadratic variation
\[
\lan I_{t}^{(N,2)}(\cdot,x) \ran_r = \int_0^r\sum_{y
\in \mathbb{Z}} \bigl(p^N(t-s,x,y)e^{c_N(t-s)}\bigr)^2
Nd\lan \bar{M}^N(y) \ran_s, \ r<t.
\]
Since $\zeta_t^N(x) \le \zeta_t^N(1)$ for
any $x\geq 2$,  $a_N$ and $b_N$ are both
bounded in $N$, \eqref{eq:4.SDE-1-1} yields  that
\begin{equation}\label{mom-4-1}
 d \lan \bar{M}^N(y)\ran_s
\le C e^{-(\log \e)|y|} \zeta_s^N(1)^2 ds,
\end{equation}
 which implies
\[
\big\lan I_{t}^{(N,2)}(\cdot,x) \big\ran_r\leq C
\int_0^r\sum_{y \in \mathbb{Z}}
\bigl(p^N(t-s,x,y)e^{c_N(t-s)}\bigr)^2 Ne^{-(\log \e) |y|}
\zeta_s^N(1)^2ds.
\]
Thus, by the above estimate and  Lemma \ref{Lem4.8}, we have
\begin{equation}\label{mom-8}
 E\bigl[ \lan I_{t}^{(N,2)}(\cdot,x) \ran_r^\k
\bigr]  \leq  C\Bigl(\int_0^r \sup_{y\in\mathbb{Z}}|p^N(t-s,x,y)
N| e^{-(\log \e)|x|}  ds \Bigr)^\k
 \leq C e^{-\k (\log \e)|x|}t^{\k/2},
\end{equation}
where $Np^N(t,x,y) \leq C t^{-\frac{1}{2}}$ has been used,  see Lemma
\ref{Ker-est} (i).

Finally, let us consider the jump size of $I_t^{(N,2)}(r,x)$. By the
definition, the jump size is determined by $M_s^N(y)$, which
inherits from $\zeta_s^N(x)$.  More precisely, we have that
\begin{align}\label{mom-9}
 \qquad \sup_{r\in [0,t ]} \big|I_t^{(N,2)}(r,x)-I_t^{(N,2)}(r-,x) \big|
\leq& C\sup_{r\in [0,t ]} \sum_{y\in \mathbb{Z}}
p^N(t-r,x,y) \sqrt{N}|\bar{M}_r^N(y)
-\bar{M}_{r-}^N(y)| \notag \\
=& C\sup_{r\in [0,t ]}  \sum_{y\in \mathbb{Z}}
p^N(t-r,x,y) \sqrt{N}|\bar{\zeta}_r^N(y)
-\bar{\zeta}_{r-}^N(y)|.
\end{align}
Since $\eta_t^N(y)$ does not jump  at same time for different
$y$, we see that
\begin{equation}\label{mom-10}
|\bar{\zeta}_r^N(y) -\bar{\zeta}_{r-}^N(y)| \leq Ce^{-(\log
\e)|y|/2} |\zeta_{r}^N(1)- \zeta_{r-}^N(1)| \leq  Ce^{-(\log
\e)|y|/2}(\e^{-1} -1)\zeta_{r}^N(1).
\end{equation}
Consequently, Lemma
\ref{Lem4.8} and \eqref{mom-5} imply again that
\[
E\Bigl[\sup_{r\in [0,t]}|I_t^{(N,2)}(r,x)-I_t^{(N,2)}(r-,x)|^{2\k } \Bigr]
\leq Ce^{-\k(\log \e)|x|}\bigl(\sqrt{N}(\e^{-1} -1)\bigr)^{2\k }.
\]
Note that $I_{t}^{(N,2)}(r,x)$ converges to $I^{(N,2)}(t,x)$ in
$L^2(\Omega)$ as $r \uparrow t$ and we can conclude  the proof
by Lemma \ref{BurkIneq}, \eqref{mom-1-1} and \eqref{mom-8}.
\end{proof}
\begin{lem}\label{Lemma-barphi-1}
Under Assumption 2, the following estimates hold:
\begin{enumerate}
\item For each $\a <1/2$,  there exist   $C$ and $\tilde{\k}>0$
such that for any $t\leq T$ and $x, y \in \mathbb{Z}$,
\begin{equation}\label{barphi-3}
\begin{split}
E \bigl[|\bar{\Phi}_t^N(x) -\bar{\Phi}_t^N(y)|^{2\k} \bigr]\leq& C
\Bigl(e^{\frac{\tilde{\k} \b  |x|}{N}} +e^{\frac{\tilde{\k}
\b    |y|}{N}} \Bigr)  \\
\times & \Bigl(\frac{|x-y|^{2\k \a }}{N^{2\k\a }}
+\frac{|x-y|^{2\k }}{N^{2\k}} +\bigl(\sqrt{N}(\e^{-1} -1)
\bigr)^{2\k } \Bigr).
\end{split}
\end{equation}
\item For each $\a <1/4$, there exist $C$ and
$\tilde{\k}>0$ such that for any $t_1, t_2 \leq T$ and $x\in
\mathbb{Z}$,
\begin{equation}\label{barphi-4}
E\bigl[|\bar{\Phi}_{t_1}^N(x) -\bar{\Phi}_{t_2}^N(x)|^{2\k }\bigr] \leq
Ce^{\frac{\tilde{\k} \b  |x|}{N}}\Bigl(|t_1-t_2|^{2\k\a } +
\bigl(\sqrt{N}(\e^{-1} -1) \bigr)^{2\k }\Bigr).
\end{equation}
\end{enumerate}
\end{lem}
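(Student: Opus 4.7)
The plan is to work from the mild representation \eqref{barphi-2} and split
\[
\bar{\Phi}_t^N(x)-\bar{\Phi}_t^N(y) = \bigl(I^{(N,1)}(t,x)-I^{(N,1)}(t,y)\bigr)+\bigl(I^{(N,2)}(t,x)-I^{(N,2)}(t,y)\bigr),
\]
treating the deterministic (initial-data) part and the stochastic (martingale) part separately, exactly as in the proof of Lemma \ref{Est-mon}. For $I^{(N,1)}$ the key step is a change of summation index that turns the kernel difference into an increment of the initial data: by translation invariance $p^N(t,x,z)=p(N^2\e^{1/2}t,x-z)$ one writes $I^{(N,1)}(t,x)-I^{(N,1)}(t,y)=\sum_w p(N^2\e^{1/2}t,w)e^{c_Nt}\bigl(\bar{\Phi}_0^N(x-w)-\bar{\Phi}_0^N(y-w)\bigr)$. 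Then Jensen's inequality (using $\sum_w p^N=1$), the initial regularity bound in Lemma \ref{WASEP-Ini}(1)(iii), and the eigenfunction identity \eqref{mom-5} (which controls $\sum_w p^N(t,w)e^{\k'\b|w|/N}$ uniformly because $(e^{\k'\b/N}+e^{-\k'\b/N}-2)N^2$ stays bounded) yield the required $|x-y|^{2\k\a}/N^{2\k\a}+|x-y|^{2\k}/N^{2\k}$ factor with the weight $e^{\tilde{\k}\b|x|/N}+e^{\tilde{\k}\b|y|/N}$.

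For $I^{(N,2)}$ I would mimic the martingale argument in Lemma \ref{Est-mon}: fix $t$ and view $r\mapsto I_t^{(N,2)}(r,x)-I_t^{(N,2)}(r,y)$ as an $L^{2\k}$-martingale on $[0,t]$ with quadratic variation
\[
\int_0^r\sum_{z\in\Z}\bigl(p^N(t-s,x,z)-p^N(t-s,y,z)\bigr)^2 e^{2c_N(t-s)} N\,d\lan \bar{M}^N(z)\ran_s.
\]
Insert the estimate $d\lan \bar{M}^N(z)\ran_s\le Ce^{-(\log\e)|z|}\zeta_s^N(1)^2ds$ from \eqref{mom-4-1} and Lemma \ref{Lem4.8} to pull out $\sup_s\zeta_s^N(1)^{2\k}$. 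For the kernel difference apply Lemma \ref{Ker-est}(ii) with parameter $\a<1/2$, extract one factor $|x-y|^{2\a}(N^2(t-s))^{-1/2-\a}$ and leave a second factor bounded uniformly; then $\sum_z(\cdot)^2 Ne^{-(\log\e)|z|}$ is controlled again via \eqref{mom-5}. The time integral $\int_0^t(t-s)^{-1/2-\a}ds$ is finite, so raising to the $\k$-th power and applying Lemma \ref{BurkIneq} gives the $|x-y|^{2\k\a}/N^{2\k\a}$ term. The jump contribution is handled by \eqref{mom-9}--\eqref{mom-10}: the jump of $\bar{\zeta}$ is bounded by $C(\e^{-1}-1)e^{-(\log\e)|z|/2}\zeta_r^N(1)$, producing the residual $\bigl(\sqrt{N}(\e^{-1}-1)\bigr)^{2\k}$ term that completes \eqref{barphi-3}.

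Part (2) follows the same template but replacing spatial differences by time differences. For $t_1>t_2$ split $I^{(N,2)}(t_1,x)-I^{(N,2)}(t_2,x)$ into an integral on $[0,t_2]$ involving the kernel difference $p^N(t_1-s,x,y)e^{c_N(t_1-s)}-p^N(t_2-s,x,y)e^{c_N(t_2-s)}$ and an integral on $[t_2,t_1]$ with only $p^N(t_1-s,x,y)e^{c_N(t_1-s)}$; the factor $e^{c_N\cdot}$ is Lipschitz in time with bounded constant since $c_N\to-\b^2/4$. The second part of Lemma \ref{Ker-est}(ii) gives $|p(t+h,x)-p(t,x)|\le Ch^{2\a}t^{-1/2-2\a}$, so after the same quadratic-variation computation and time integration one obtains $|t_1-t_2|^{2\k\a}$ with $\a<1/4$ (the loss of a factor of two compared to the spatial case reflects the parabolic scaling). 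The remaining integral on $[t_2,t_1]$ is even simpler, producing an $|t_1-t_2|^{\k}$ contribution that is dominated. The initial-data part is handled analogously, using Lemma \ref{WASEP-Ini}(1)(ii) and \eqref{mom-5}.

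The main obstacle will be bookkeeping the exponential weights $e^{\k'\b|z|/N}$ at each occurrence of $\sum_z p^N$ or $\sum_z (p^N)^2$ together with $e^{-(\log\e)|z|}$ from the quadratic-variation estimate: at every step one must verify that the auxiliary factor $\exp\{(e^a+e^{-a}-2)N^2\e^{1/2}t\}$ arising from \eqref{mom-5} stays bounded uniformly in $N$ for $a$ of order $1/N$. The other bookkeeping point is the small residual $(\sqrt{N}(\e^{-1}-1))^{2\k}$; since $\sqrt{N}(\e^{-1}-1)=O(N^{-1/2})$, this term is harmless in the tightness argument of the next subsection but must be tracked explicitly through Burkholder's inequality for the jumps of $\bar{M}^N$.
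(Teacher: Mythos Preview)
Your proposal is correct and follows essentially the same route as the paper's proof: the same decomposition into $I^{(N,1)}$ and $I^{(N,2)}$, the translation-invariance/change-of-variables trick together with Lemma \ref{WASEP-Ini}(1)(iii) for the initial-data part, and Burkholder's inequality with the kernel increment estimates of Lemma \ref{Ker-est}(ii), the weight identity \eqref{mom-5}, and the jump bound \eqref{mom-10} for the martingale part; for part (2) the paper likewise splits the stochastic integral over $[t_1,t_2]$ and $[0,t_1]$ and applies the temporal Hölder bound on $p^N$ with the restriction $\a<1/4$.
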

\begin{proof}
The main idea to prove this lemma is similar to that for
Lemma \ref{Est-mon}. We will only give some necessary
explanations by using same notations. We begin with the proof of \eqref{barphi-3}.  The representation  of $I^{(N, 1)}(t,x)$,
 change of variables and  Lemma \ref{WASEP-Ini} yield that
\begin{equation}\label{mom-differ-1-0}
 E \bigl[(I^{(N, 1)}(t,x)-I^{(N,
1)}(t,y))^{2\k } \bigr]\leq C \Bigl(e^{\frac{\k' \b  |x|}{N}}
+e^{\frac{\k' \b
|y|}{N}} \Bigr)  
\times \Bigl(\frac{|x-y|^{2\k \a }}{N^{2\k\a }}
+\frac{|x-y|^{2\k }}{N^{2\k}} \Bigr).
\end{equation}
For $I_t^{(N,2)}$, owing to Lemma
\ref{BurkIneq}, it is sufficient to deal with $\lan
I_t^{(N,2)}(\cdot,x) - I_t^{(N,2)}(\cdot,y) \ran_r $ and
the jump of $ I_t^{(N,2)}(r,x) -I_t^{(N,2)}(r,y)$
respectively.\\
By  Lemma \ref{Ker-est} (ii),
$
N|p^N(s,x,z) -p^N(s,y,z)| \leq CN^{-2\a }s^{-1/2 -\a }|x-y|^{2\a },\ \a  <1/2.
$\\
Now let us take $r=t$ and we deduce
that
\begin{align}\label{mom-differ-1}
 & E\bigl[ \big\lan I_t^{(N,2)}(\cdot,x) - I_t^{(N,2)}(\cdot,y)
 \big\ran_t^{\k} \bigr] \\
 &\leq CE\Bigl[\sup_{s\in [0, t]}|\zeta_s^N(1)|^{2\k } \Bigr]
\Bigl(\int_0^t\sum_{z \in \mathbb{Z}} \bigl(p^N(s,x,z)
-p^N(s,y,z)\bigr)^2e^{2c_Ns} Ne^{-(\log \e) |z|} ds
\Bigr)^\k  \notag \\
&\leq C \Bigl(\frac{|x-y|^{2\a }}{N^{2\a }} \int_0^t s^{-1/2
-\a }e^{2c_Ns}\sum_{z\in \mathbb{Z}}(p^N(s,x,z)+
p^N(s,y,z))e^{-(\log \e) |z|}ds \Bigr)^\k \notag \\
&\leq C\Bigl(e^{-\k(\log \e) |x|}+e^{-\k(\log \e) |y|}
\Bigr)N^{-2\k\a }|x-y|^{2\k \a }t^{ \k(1/2
-\a )}, \notag
\end{align}
where Lemma \ref{Lem4.8} has been applied for the second inequality, and $\a
<1/2$ as well as \eqref{mom-5} have been used for the last inequality. Using
a similar approach to \eqref{mom-9} and recalling
\eqref{mom-5} and \eqref{mom-10}, we have
\begin{align*} 
& E\Bigl[\sup_{r\in [0,t ]} \Bigl|\bigl(I_t^{(N,2)}(r,x) -
I_t^{(N,2)}(r,y) \bigr)
-\bigl(I_t^{(N,2)}(r-,x)-I_t^{(N,2)}(r-,y) \bigr) \Bigr|^{2\k } \Bigr] \\
&\leq  C\Bigl( \sup_{r\in [0,t ]} \sqrt{N}\sum_{z \in \mathbb{Z}}
|p^N(t-r,x,z)- p^N(t-r,y,z)| e^{c_N(t-r)}e^{-(\log \e)|z|/2}
(\e^{-1} -1) \Bigr)^{2\k } \notag\\
&\leq C \bigl(\sqrt{N}(\e^{-1} -1) \bigr)^{2\k }
\bigl(e^{-\k(\log \e) |x|}+e^{-\k(\log \e) |y|}
\bigr),\notag
\end{align*}
which yields \eqref{barphi-3} together with \eqref{mom-differ-1-0} and
\eqref{mom-differ-1}.

Now we show the second estimate \eqref{barphi-4} but only for $I^{(N,2)}
(t_2, x) -I^{(N,2)}(t_1,x)$,
since that for $I^{(N,1)}(t_2, x) -I^{(N,1)}(t_1,x)$ is easier.  By
a similar approach as above, for any $0\leq t_1 < t_2 \leq T$, we
can easily obtain
\begin{align*}
& E\Bigl[ \Bigl|\int_{t_1}^{t_2}\sum_{y \in \mathbb{Z}}
p^N(t_2-s,x,y)e^{c_N(t_2-s)}\sqrt{N}d\bar{M}_s^N(y) \Bigr|^{2\k }
\Bigr]\\
&\leq Ce^{-\k(\log \e) |x|}
\Bigl(\int_{t_1}^{t_2}\sup_{y} p^N(t_2-s,x,y)N
e^{2c_N(t_2-s)}e^{(\e +\e^{-1}
-2)N^2\e^{1/2} (t_2-s)}ds\Bigr)^{\k} \notag \\
 &\quad+ CE\Bigl[\Bigl(\sup_{r\in [t_1, t_2]} \Bigl|\int_{r-}^r
\sum_{y\in \mathbb{Z}} p^N(t_2 -r,
x,y)e^{c_N(t_2-s)}\sqrt{N}d\bar{M}_s^N(y) \Bigr|
 \Bigr)^{2\k } \Bigr] \\
&\leq  C e^{-\k(\log \e) |x|}|t_1 -t_2|^{\frac{\k}{2}} +
Ce^{-\k(\log \e) |x|} \bigl(\sqrt{N}(\e^{-1} -1)
\bigr)^{2\k }. \notag
\end{align*}
Hereafter, for simplicity, to deal with the jump part, we write it
in its integral form and consider the  process directly. In fact,
the following calculations are just formal, see Lemma \ref{Est-mon}
for  concrete explanations.

By  Lemma \ref{BurkIneq}  and \eqref{mom-4-1} and imitating
the procedure used in the proof of \eqref{barphi-3}, we can
deduce that
\begin{equation*}\label{mom-differ-2-1}
\begin{split}
&E\Bigl[ \Bigl|\int_0^{t_1} \sum_{y \in \mathbb{Z}} (
p^N(t_1-s, x, y)e^{c_N(t_1-s)} -p^N(t_2-s, x,
y)e^{c_N(t_2-s)}
)\sqrt{N}\bar{M}_s^N(y) \Bigr|^{2\k } \Bigr] \\
 &\leq   C
\Bigl(\int_0^{t_1} \sum_{y \in \mathbb{Z}} \bigl(
p^N(t_1-s, x, y)e^{c_N(t_1-s)} -p^N(t_2-s, x,
y)e^{c_N(t_2-s)} \bigr)^2Ne^{-(\log
\e)|y|}ds \Bigr)^\k\\
  &+ C E\Bigl[\sup_{r\in [0, t_1]}\Bigl|\int_{r-}^{r} \sum_{y \in
\mathbb{Z}} \bigl( p^N(t_1-s, x, y)e^{c_N(t_1-s)} -p^N(t_2-s, x,
y)e^{c_N(t_2-s)} \bigr)\sqrt{N}\bar{M}_s^N(y) \Bigr|^{2\k }
\Bigr]
\end{split}
\end{equation*}
holds, where the second term on the right hand side denotes the
corresponding jump part.
 With the behavior of $c_N$, the property
\[
N| p^N(t_1-s, x, y) -p^N(t_2-s, x, y) | \leq C (t_2-t_1)^{2\a }
(t_1-s)^{-1/2 -2\a }, 
\]
and \eqref{mom-5}, we obtain that the first term on the right
hand side is bounded from above by
\[
 Ce^{-\k(\log \e) |x|}(t_2 -t_1)^{2\k \a },
\]
where the restriction of $\a  <1/4$ has been used.

Relation \eqref{mom-10} is used to bound the second term from above by
\[
2e^{-\k(\log \e)|x|}\bigl(\sqrt{N}(\e^{-1} -1)
\bigr)^{2\k }\sup_{s\in [0, t_1]}\Bigl(e^{(\e
+\e^{-1} -2)N^2\e^{1/2} (t_1-s)} +e^{(\e
+\e^{-1} -2)N^2\e^{1/2} (t_2-s)}\Bigr)^\k.
\]
All the above estimates applied yield the upper bound
\begin{align*}
&E\Bigl[ \Bigl|\int_0^{t_1} \sum_{y \in \mathbb{Z}} \bigl( p^N(t_1-s, x,
y)e^{c_N(t_1-s)} -p^N(t_2-s, x, y)e^{c_N(t_2-s)} \bigr)
\sqrt{N}\bar{M}_s^N(y) \Bigr|^{2\k } \Bigr] \\
&\leq  Ce^{-\k(\log \e)|x|} |t_2-t_1|^{2\k\a } +
Ce^{-\k(\log \e) |x|} \bigl(\sqrt{N}(\e^{-1} -1)
\bigr)^{2\k }.
\end{align*}
Now we can easily conclude the proof of \eqref{barphi-4} by
choosing a proper $\tilde{\k}$.
\end{proof}
To show the tightness of  $\bar{\Phi}^N(t, u)$ and  that its limit is in
$C([0, T], C(\R))$, we prepare two lemmas.
We first establish the local H\"{o}lder estimates in the space variable.
\begin{lem}\label{Kol-Lem-1}
For any  $T>0$ and each $\a  < 1/2$, there exists a
constant $C>0$ such that for any $u_1, u_2 \in [-K, K]$ and
$t\leq T$
\begin{equation}\label{Kol-Lem-1-1}
E\bigl[|\bar{\Phi}^N(t, u_1) -\bar{\Phi}^N(t, u_2)|^{2\k }\bigr] \leq C
|u_1 -u_2|^{2\k\a },
\end{equation}
and moreover for   $\a ' \in [0, \frac{2\k\a  -1}{2\k })$
\begin{equation}\label{Kol-Lem-1-2}
\sup_{t\in [0, T]}E\Big[\sup_{u_1 \neq
u_2}\Bigl(\frac{\bar{\Phi}^N(t, u_1) -\bar{\Phi}^N(t, u_2)|}{|u_1
-u_2|^{\a '}} \Bigr)^{2\k } \Big] <\infty.
\end{equation}
\end{lem}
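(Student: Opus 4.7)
The plan is to deduce both estimates from the lattice-point estimate in Lemma \ref{Lemma-barphi-1}(1), using the piecewise linear structure of $\bar{\Phi}^N$ given by \eqref{DefLinInterPhi}. First, observe that for $u \in [-K,K]$ the relevant lattice indices $x = [Nu], [Nu]+1$ satisfy $|x| \le N(K+1)$, so the exponential factor $e^{\tilde{\kappa}\beta|x|/N}$ is bounded by $e^{\tilde{\kappa}\beta(K+1)}$ uniformly in $N$. Also, from $\e = \e_R(N) = 1-\beta/N + O(\log N/N^2)$ (Lemma 3.2 of \cite{FS}) we have $\sqrt{N}(\e^{-1}-1) = O(1/\sqrt{N})$, hence $(\sqrt{N}(\e^{-1}-1))^{2\kappa} \le CN^{-\kappa}$.

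For \eqref{Kol-Lem-1-1} I would split into two regimes. If $|u_1-u_2| \ge 1/N$, choose $x_i = [Nu_i]$, use the piecewise linear interpolation to get $|\bar{\Phi}^N(t,u_i) - \bar{\Phi}^N_t(x_i)| \le |\bar{\Phi}^N_t(x_i+1)-\bar{\Phi}^N_t(x_i)|$, and apply Lemma \ref{Lemma-barphi-1}(1) both to $\bar{\Phi}^N_t(x_1) - \bar{\Phi}^N_t(x_2)$ (with $|x_1-x_2|/N \le 3|u_1-u_2|$) and to the correction terms (with $|x-y|=1$). Since $\alpha < 1/2$ gives $N^{-\kappa} \le N^{-2\kappa\alpha} \le |u_1-u_2|^{2\kappa\alpha}$, and $|u_1-u_2|^{2\kappa} \le (2K)^{2\kappa(1-\alpha)}|u_1-u_2|^{2\kappa\alpha}$, all three contributions collapse into $C|u_1-u_2|^{2\kappa\alpha}$. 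If $|u_1-u_2| < 1/N$ and $[Nu_1]=[Nu_2]=k$, the linear interpolation gives the exact identity
\[
\bar{\Phi}^N(t,u_1)-\bar{\Phi}^N(t,u_2) = N(u_1-u_2)\bigl(\bar{\Phi}^N_t(k+1)-\bar{\Phi}^N_t(k)\bigr);
\]
Lemma \ref{Lemma-barphi-1}(1) with $|x-y|=1$ then gives $E[\,|\cdot|^{2\kappa}\,] \le C(N|u_1-u_2|)^{2\kappa} N^{-2\kappa\alpha}$, and since $N|u_1-u_2| \le 1$ and $2\kappa > 2\kappa\alpha$ we may replace the exponent $2\kappa$ by $2\kappa\alpha$, obtaining $C|u_1-u_2|^{2\kappa\alpha}$. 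The remaining subcase $[Nu_2]=[Nu_1]+1$ is handled by inserting the intermediate point $[Nu_2]/N$ and applying the previous estimate twice.

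For \eqref{Kol-Lem-1-2} I would invoke the Kolmogorov--Chentsov continuity criterion (in the Garsia--Rodemich--Rumsey form): the moment bound \eqref{Kol-Lem-1-1} with exponent $2\kappa$ and spatial regularity $|u_1-u_2|^{2\kappa\alpha}$ yields, for every $\alpha' < (2\kappa\alpha - 1)/(2\kappa)$, a bound on the $L^{2\kappa}$-norm of the $\alpha'$-H\"older seminorm on $[-K,K]$, with a constant depending only on $K, T, \alpha, \alpha', \kappa$ and the constant $C$ of \eqref{Kol-Lem-1-1} — in particular uniform in $t \in [0,T]$ and $N$. (This is where the free parameter $\kappa$ in Assumption 2 is needed to be large enough to ensure $2\kappa\alpha > 1$.)

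The main obstacle I expect is the subcritical regime $|u_1-u_2| < 1/N$: a naive application of Lemma \ref{Lemma-barphi-1}(1) with $|x-y|=1$ yields only an $N$-independent, hence useless, bound, and one must combine it with the extra factor $(N|u_1-u_2|)^{2\kappa}$ coming from the slope of the linear interpolation and then use $N|u_1-u_2| \le 1$ to lower the exponent to $2\kappa\alpha$. A secondary subtlety is making sure the $(\sqrt{N}(\e^{-1}-1))^{2\kappa}$ error term in Lemma \ref{Lemma-barphi-1}(1), which does not carry a $|x-y|$ factor, is nonetheless absorbed into $|u_1-u_2|^{2\kappa\alpha}$; this works precisely because $\alpha<1/2$ and $|u_1-u_2|\ge 1/N$ in Case~1 (while in Case~2 this term is dominated by the leading contribution $N^{-2\kappa\alpha}$).
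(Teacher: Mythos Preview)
Your proposal is correct and follows essentially the same route as the paper: both reduce \eqref{Kol-Lem-1-1} to the lattice estimate \eqref{barphi-3} via the piecewise linear structure \eqref{DefLinInterPhi}, and then deduce \eqref{Kol-Lem-1-2} from Kolmogorov's continuity criterion. The only cosmetic difference is the case split --- the paper splits according to whether $[Nu_1]=[Nu_2]$ or not (inserting the grid points $([Nu_1]+1)/N$ and $[Nu_2]/N$ in the latter case), while you split according to $|u_1-u_2|\gtrless 1/N$; your explicit tracking of how the $(\sqrt{N}(\e^{-1}-1))^{2\k}$ term is absorbed is a detail the paper leaves implicit.
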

\begin{proof}
Let us first assume that $[Nu_1] =[Nu_2]$. Then by
\eqref{barphi-3} 
we deduce  that
\begin{align*}
E \bigl[|\bar{\Phi}^N(t, u_1) -\bar{\Phi}^N(t, u_2)|^{2\k }\bigr]
\leq& CN^{2\k }|u_1-u_2|^{2\k }
E \bigl[|\bar{\Phi}^N_t([Nu_1]+1)-\bar{\Phi}^N_t([Nu_1])|^{2\k } \bigr]
\notag\\
\leq & C'|u_1-u_2|^{2\k\a }.
\end{align*}
Next we assume, without loss of generality, that $Nu_1 < [Nu_1]+1 \leq [Nu_2] \leq
Nu_2$. From the above estimate and again \eqref{barphi-3}, we obtain
\begin{align*}
E \bigl[|\bar{\Phi}^N(t, u_1) -\bar{\Phi}^N(t, u_2)|^{2\k } \bigr]
\leq & C\Bigl(\bigl(\tfrac{[Nu_1]+1}{N}- u_1\bigr)^{2\k\a  } +
\bigl(u_2 -\tfrac{[Nu_2]}{N}\bigr)^{2\k\a  }\Bigr)\\
  + C E \bigl[ \bigl|\bar{\Phi}^N & \bigl(t,\tfrac{[Nu_1]+1}{N}\bigr)
-\bar{\Phi}^N \bigl(t, \tfrac{[Nu_2]}{N} \bigr)\bigr|^{2\k } \bigr]
\leq C'|u_1-u_2|^{2\k\a },
\end{align*}
which implies \eqref{Kol-Lem-1-1}. On the other hand, the
second assertion \eqref{Kol-Lem-1-2} is a direct
consequence of \eqref{Kol-Lem-1-1} and Kolmogorov's
theorem, for example, see Proposition 4.4 of \cite{BG}.
\end{proof}
In fact, it is clear that $\bar{\Phi}^N(t, u)$ is not continuous in
$t$, so Kolmogorov's continuity theorem cannot be applied directly.
To overcome this difficulty, we introduce the process
$\bar{\mathbf{\Phi}}^N(t,u)$, namely consider the linear interpolation in
time $t$ defined as
\[
\bar{\mathbf{\Phi}}^N(t, u) :=([N^2t] +1 -N^2t)\bar{\Phi}^N \bigl(
\tfrac{[N^2 t]}{N^2}, u \bigr) - (N^2t- [N^2t])\bar{\Phi}^N \bigl(
\tfrac{[N^2t] +1}{N^2}, u \bigr).
\]
\begin{lem}\label{Kol-Lem-2}
Let $\a  < 1/4$. For any $t_1, t_2 \leq T$ and $u_1, u_2 \in [-K,
K]$, there exists a constant $C>0$ such that
\begin{equation}\label{Kol-Lem-2-1}
 E \bigl[|\bar{\mathbf{\Phi}}^N(t_1, u_1) -\bar{\mathbf{\Phi}}^N
(t_2, u_2)
|^{2\k } \bigr]   \leq  C \Bigl( \bigl(|t_2-t_1|^{2\k\a } +|u_2
-u_1|^{2\k\a } \bigr) + \bigl(\sqrt{N}(\e^{-1} -1)
\bigr)^{2\k } \Bigr)
\end{equation}
and moreover for  $\a ' \in [0, \frac{2\k\a -1}{2\k })$
\[
E\Big[\sup_{t_1\neq t_2}\sup_{u_1 \neq u_2 }
\Bigl(\frac{\bar{\mathbf{\Phi}}^N(t_1, u_1)
-\bar{\mathbf{\Phi}}^N(t_2, u_2)}{(|t_2-t_1|+ |u_2
-u_1|)^{\a '}}\Bigr)^{2\k } \Big] < \infty.
\]
\end{lem}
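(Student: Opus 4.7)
The plan is to decompose the bivariate increment into a spatial piece at a fixed time and a temporal piece at a fixed spatial point, estimate each piece using the discrete bounds in Lemmas \ref{Lemma-barphi-1} and \ref{Kol-Lem-1}, and then conclude the H\"{o}lder-type supremum statement by the two-parameter Kolmogorov continuity criterion cited from \cite{BG}. Concretely, I would first write
\[
\bar{\mathbf{\Phi}}^N(t_1,u_1) - \bar{\mathbf{\Phi}}^N(t_2,u_2) = \bigl[\bar{\mathbf{\Phi}}^N(t_1,u_1) - \bar{\mathbf{\Phi}}^N(t_1,u_2)\bigr] + \bigl[\bar{\mathbf{\Phi}}^N(t_1,u_2) - \bar{\mathbf{\Phi}}^N(t_2,u_2)\bigr]
\]
and treat the two brackets separately.

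For the first bracket, the key observation is that $\bar{\mathbf{\Phi}}^N(t,\cdot)$ is an affine combination of $\bar{\Phi}^N(k/N^2,\cdot)$ and $\bar{\Phi}^N((k+1)/N^2,\cdot)$ with $k=[N^2t]$. Applying Lemma \ref{Kol-Lem-1} to each of these two processes at $t=t_1$ and using convexity of $x\mapsto x^{2\kappa}$ yields the spatial bound $C|u_1-u_2|^{2\kappa\alpha}$ uniformly in $N$. For the second bracket, I would distinguish two cases. When $|t_1-t_2|\leq 1/N^2$ (so both points lie within a single interpolation cell), linearity reduces the problem to a single lattice-time jump,
\[
\bar{\mathbf{\Phi}}^N(t_1,u_2) - \bar{\mathbf{\Phi}}^N(t_2,u_2) = N^2(t_1-t_2)\bigl[\bar{\Phi}^N((k+1)/N^2,u_2)-\bar{\Phi}^N(k/N^2,u_2)\bigr],
\]
and plugging \eqref{barphi-4} with separation $1/N^2$ into this identity, combined with the elementary identity
\[
|N^2(t_1-t_2)|^{2\kappa}N^{-4\kappa\alpha}=(N^2|t_1-t_2|)^{2\kappa(1-\alpha)}|t_1-t_2|^{2\kappa\alpha}\leq|t_1-t_2|^{2\kappa\alpha},
\]
reproduces the right-hand side of \eqref{Kol-Lem-2-1}. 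When $|t_1-t_2|>1/N^2$, the difference is a linear combination of at most four lattice-time increments $\bar{\Phi}^N(k/N^2,\cdot)-\bar{\Phi}^N(\ell/N^2,\cdot)$ with $|k-\ell|/N^2\leq 2|t_1-t_2|$, so \eqref{barphi-4} applies directly after interpolating in $u_2$ (again a convex combination of two lattice points) and using the uniform bound $e^{\tilde\kappa\beta|x|/N}\leq C_K$ for $|x|\leq NK+1$.

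The second assertion then follows from the two-parameter Kolmogorov continuity criterion applied to the random field $\bar{\mathbf{\Phi}}^N$ on $[0,T]\times[-K,K]$, as in Proposition 4.4 of \cite{BG}. The main obstacle is really the persistent jump term $(\sqrt{N}(\epsilon^{-1}-1))^{2\kappa}$: because of the scaling $\epsilon=1-\beta/N+O(\log N/N^2)$ this term stays of order $\beta^{2\kappa}$ rather than vanishing, so it must be kept explicitly on the right-hand side of \eqref{Kol-Lem-2-1} and cannot be absorbed into a H\"{o}lder term in $|t_1-t_2|$. Apart from this bookkeeping, the argument is a careful but otherwise routine reduction of the joint increment to the one-point discrete moment estimates already established.
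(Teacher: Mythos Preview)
Your decomposition and case analysis match the paper's proof: spatial piece via convexity and Lemma~\ref{Kol-Lem-1}, temporal piece by the same-cell case followed by a reduction of the cross-cell case to edge pieces plus a lattice-to-lattice application of \eqref{barphi-4}. One minor slip: $|t_1-t_2|\le 1/N^2$ does not guarantee that $t_1,t_2$ lie in the same interpolation cell (they may straddle a mesh point); the correct same-cell hypothesis is $[N^2t_1]=[N^2t_2]$, as in the paper.

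The substantive error is your scaling of the jump term. From $\e=1-\b/N+O(\log N/N^2)$ one has $\e^{-1}-1=\b/N+O(\log N/N^2)$, hence $\sqrt{N}(\e^{-1}-1)\sim\b N^{-1/2}\to 0$ and $\bigl(\sqrt{N}(\e^{-1}-1)\bigr)^{2\k}=O(N^{-\k})$, not of order $\b^{2\k}$. This is precisely what allows the jump contribution to be \emph{absorbed} into the H\"older term. In the same-cell step the paper obtains
\[
\bigl(N^2|t_1-t_2|\bigr)^{2\k}\Bigl[N^{-4\k\a}+\bigl(\sqrt{N}(\e^{-1}-1)\bigr)^{2\k}\Bigr]\le C\,|t_1-t_2|^{2\k\a},
\]
the second summand being handled via $(N^2|t_1-t_2|)^{2\k}N^{-\k}\le N^{\k(4\a-1)}|t_1-t_2|^{2\k\a}$, which is bounded exactly because $\a<1/4$. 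An analogous absorption works for the lattice-to-lattice piece in the cross-cell case (its time separation is either zero or $\ge 1/N^2$). So the temporal moment bound is in fact $C|t_1-t_2|^{2\k\a}$ with no additive remainder, and Kolmogorov's criterion then gives the second assertion uniformly in $N$. If the jump term were of order one as you claim, \eqref{Kol-Lem-2-1} would carry a non-vanishing additive constant and the two-parameter Kolmogorov step for the second assertion would not go through.
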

\begin{proof}
Note that
\begin{align*}
 & E\bigl[|\bar{\mathbf{\Phi}}^N(t_1, u_1) -\bar{\mathbf{\Phi}}^N(t_1,
u_2) |^{2\k } \bigr]\\
& \leq C \bigl([N^2t_1]+1 -N^2t_1
\bigr)^{2\k } E\Bigl[\bigl|\bar{\Phi}^N \bigl(\tfrac{[N^2t_1]}{N^2}, u_1
\bigr)
-\bar{\Phi}^N \bigl(\tfrac{[N^2t_1]}{N^2}, u_2 \bigr)\bigr|^{2\k }
\Bigr] \notag\\
 &+C\bigl(N^2t_1
-[N^2t_2] \bigr)^{2\k }E\Bigl[\bigl|\bar{\Phi}^N \bigl(
\tfrac{[N^2t_1]+1}{N^2},u_1 \bigr) -\bar{\Phi}^N \bigl(
\tfrac{[N^2t_1] +1}{N^2}, u_2 \bigr) \bigr|^{2\k } \Bigr]
\end{align*}
and use Lemma \ref{Kol-Lem-1} to obtain
\begin{equation}\label{Kol-Lem-2-4}
E \bigl[|\bar{\mathbf{\Phi}}^N(t_1, u_1) -\bar{\mathbf{\Phi}}^N
(t_1, u_2)|^{2\k } \bigr]
\leq C|u_1-u_2|^{2\k\a }.
\end{equation}
Let us now deal with the term
$E \bigl[|\bar{\mathbf{\Phi}}^N(t_1, u_2) -\bar{\mathbf{\Phi}}^N
(t_2, u_2)|^{2\k } \bigr].$
We mainly use a similar method to the proof of Lemma \ref{Kol-Lem-1} and
first assume that $[N^2t_1]=[N^2t_2]$. Then  by \eqref{barphi-4} and
$\a  <1/4$, we have that
\begin{equation}\label{Kol-Lem-2-6}
\begin{split}
& E \bigl[|\bar{\mathbf{\Phi}}^N(t_1, u_2) -\bar{\mathbf{\Phi}}^N(t_2,
u_2)|^{2\k } \bigr] \\
&\leq
C\bigl(N^2(t_2-t_1)\bigr)^{2\k }E\Bigl[\bigl|\bar{\Phi}^N \bigl(
\tfrac{[N^2t_1]}{N^2},u_2 \bigr)
-\bar{\Phi}^N \bigl(\tfrac{[N^2t_1]+1}{N^2}, u_2 \bigr)\bigr|^{2\k }
\Bigr] \\
  &\leq C'\bigl(N^2(t_2-t_1)\bigr)^{2\k }\Bigl[N^{-4\k\a } +
\bigl(\sqrt{N}(\e^{-1} -1)\bigr)^{2\k } \Bigr] \leq C''
|t_2 -t_1|^{2\k\a }.
\end{split}
\end{equation}
For general $t_1$ and $t_2$, without loss of generality,  we
may assume that $N^2t_1 <[N^2t_1]+1 \leq [N^2]t_2 \leq
N^2t_2.$ Use \eqref{barphi-4} and
\eqref{Kol-Lem-2-6}, to derive the estimate
\begin{align*}
& E\bigl[|\bar{\mathbf{\Phi}}^N(t_1, u_2) -\bar{\mathbf{\Phi}}^N(t_2,
u_2) |^{2\k } \bigr]\\
&\leq C E \Bigl[\bigl|\bar{\mathbf{\Phi}}^N(t_1, u_2) -
\bar{\mathbf{\Phi}}^N \bigl(\tfrac{[N^2t_1]+1}{N^2}, u_2 \bigr)
\bigr|^{2\k } \Bigr] +
C E\Bigl[\bigl|\bar{\mathbf{\Phi}}^N \bigl(\tfrac{[N^2t_2]}{N^2}, u_2
\bigr)
-\bar{\mathbf{\Phi}}^N(t_2, u_2) \bigr|^{2\k } \Bigr] \notag \\
 & + C E\Bigl[
\bigl|\bar{\mathbf{\Phi}}^N \bigl(\tfrac{[N^2t_1]+1}{N^2}, u_2 \bigr)
-\bar{\mathbf{\Phi}}^N \bigl(\tfrac{[N^2t_2]}{N^2},
u_2 \bigr) \bigr|^{2\k } \Bigr] \leq C'|t_1- t_2|^{2\k\a }. \notag
\end{align*}
Now we obtain \eqref{Kol-Lem-2-1} by \eqref{Kol-Lem-2-4}
and the above estimate. The last part of this lemma is
trivial by Kolmogorov's theorem.
\end{proof}
\begin{prop}\label{tightPhi-1}
The process ${\bar{\Phi}}^N(t, u)$ in Proposition \ref{prop:4.4} is
tight in $D([0, T], C(\mathbb{R}))$.
\end{prop}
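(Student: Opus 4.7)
The plan is to verify the two Aldous--Kurtz conditions stated just before Lemma \ref{BurkIneq}, using the already-prepared moment estimates. For condition (1), the pointwise bound $\sup_N E[|\bar{\Phi}^N(t,0)|^{\kappa}] < \infty$ follows from Lemma \ref{Est-mon} at $x=0$ after interpolation (the interpolation weights sum to $1$, and by Lemma 3.2 of \cite{FS} the factor $(\sqrt{N}(\varepsilon^{-1}-1))^{2\kappa}$ is bounded uniformly in $N$). The Hölder estimate in $u$ at a fixed time $t$ is exactly \eqref{Kol-Lem-1-1} of Lemma \ref{Kol-Lem-1}, with exponent $1+\alpha$ for any $\alpha < 2\kappa\alpha_0 - 1$ by choosing $\kappa$ large and $\alpha_0 < 1/2$ close to $1/2$.

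For the temporal modulus in condition (2), I would introduce the time-linear interpolation $\bar{\mathbf{\Phi}}^N(t,u)$ already defined in the paper. By Lemma \ref{Kol-Lem-2}, for $\alpha < 1/4$ and sufficiently large $\kappa$ so that $2\kappa\alpha - 1 > 0$, Kolmogorov's continuity theorem on $[0,T] \times [-K,K]$ produces a random modulus $\omega_{N,K}$ with
\[
\sup_{|t-s|+|u-v| \le \delta,\, u,v \in [-K,K]} \bigl|\bar{\mathbf{\Phi}}^N(t,u) - \bar{\mathbf{\Phi}}^N(s,v)\bigr| \le \omega_{N,K}(\delta),
\]
and $E[\omega_{N,K}(\delta)^{2\kappa}] \le C_K(\delta^{\beta} + (\sqrt{N}(\varepsilon^{-1}-1))^{2\kappa})$ for some $\beta > 0$. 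Since $\sqrt{N}(\varepsilon^{-1}-1) = O(N^{-1/2})$ by Lemma 3.2 of \cite{FS}, the right-hand side vanishes as $\delta \downarrow 0$ and $N \to \infty$ in the prescribed order. Setting
\[
A^N(\delta) := \sum_{K=1}^{\infty} 2^{-K} \bigl(\omega_{N,K}(\delta) \wedge 1\bigr)
\]
(with the sum convergent since each summand is bounded by $2^{-K}$), the corresponding bound for the standard metric on $C(\mathbb{R})$ giving local uniform convergence follows, and $\lim_{\delta \downarrow 0} \limsup_{N\to\infty} E[A^N(\delta)] = 0$ by dominated convergence.

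The remaining gap, and the one mildly delicate step, is that Aldous--Kurtz is applied to $\bar{\Phi}^N$ itself, not to its time-interpolation. The difference satisfies
\[
\bigl|\bar{\Phi}^N(t,u) - \bar{\mathbf{\Phi}}^N(t,u)\bigr| \le \bigl|\bar{\Phi}^N(\tfrac{[N^2 t]+1}{N^2},u) - \bar{\Phi}^N(\tfrac{[N^2 t]}{N^2},u)\bigr|,
\]
whose $2\kappa$-th moment is controlled by \eqref{barphi-4} with $|t_1 - t_2| \le N^{-2}$, yielding a bound $C N^{-4\kappa\alpha} + C(\sqrt{N}(\varepsilon^{-1}-1))^{2\kappa}$ that is uniformly small in $(t,u)$ on compacts. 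Hence the modulus control for $\bar{\mathbf{\Phi}}^N$ transfers to $\bar{\Phi}^N$ up to an additive error that vanishes with $N$, and both Aldous--Kurtz conditions are verified. The main obstacle is purely bookkeeping: choosing $\kappa$ large enough so that the Kolmogorov exponent $\beta$ is strictly positive while keeping $\alpha < 1/4$, and summing the compact-set moduli against a suitable weight so as to produce the single metric $d$ appearing in the Aldous--Kurtz criterion; no new probabilistic input beyond Lemmas \ref{Est-mon}--\ref{Kol-Lem-2} is required.
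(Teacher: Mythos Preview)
Your treatment of condition (1) and your use of Lemma \ref{Kol-Lem-2} to get a Kolmogorov modulus for the time-interpolated process $\bar{\mathbf{\Phi}}^N$ are fine, and this is essentially what the paper does. The gap is in the ``remaining'' step where you pass from $\bar{\mathbf{\Phi}}^N$ back to $\bar{\Phi}^N$.

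The displayed inequality
\[
\bigl|\bar{\Phi}^N(t,u) - \bar{\mathbf{\Phi}}^N(t,u)\bigr| \le \bigl|\bar{\Phi}^N(\tfrac{[N^2 t]+1}{N^2},u) - \bar{\Phi}^N(\tfrac{[N^2 t]}{N^2},u)\bigr|
\]
is simply false: $\bar{\mathbf{\Phi}}^N(t,u)$ is a convex combination of the two endpoint values, but the actual process $\bar{\Phi}^N(t,u)$ is a jump process and need not lie between them; it can jump up and back within $[k/N^2,(k+1)/N^2]$ and be far from both endpoints at intermediate times. Consequently the pointwise moment estimate \eqref{barphi-4} with $|t_1-t_2|\le N^{-2}$ does not control this difference. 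Even if you had a correct bound of the form $\sup_{t,u}E[|\bar{\Phi}^N(t,u)-\bar{\mathbf{\Phi}}^N(t,u)|^{2\kappa}]\le CN^{-4\kappa\alpha}$, that is still a \emph{pointwise-in-$t$} moment bound: to build an $A^N(\delta)$ valid for \emph{all} $t$ in the Aldous--Kurtz inequality you need control of $\sup_{t\in[0,T]}\sup_{u\in[-K,K]}|\bar{\Phi}^N(t,u)-\bar{\mathbf{\Phi}}^N(t,u)|$, and this does not follow from a fixed-$t$ moment bound.

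The paper closes exactly this gap with Lemma \ref{errorphi}. There the difference is estimated cell-by-cell on $[k/N^2,(k+1)/N^2]\times[x/N,(x+1)/N]$ by going back to the SDE for $\zeta_t^N(x)$: the drift is bounded via $\bar L^N\zeta_s^N(x)\le CN\zeta_s^N(x)$, the martingale part via Burkholder and the quadratic variation bound $d\langle M^N(x)\rangle_s\le C\zeta_s^N(x)^2\,ds$, both fed by Lemma \ref{Lem4.8}. This yields $E[\sup_{t\in I_N(k)}\sup_{Nu\in[x,x+1]}|\bar{\Phi}^N-\bar{\mathbf{\Phi}}^N|^{2\kappa}]\le CN^{-4\kappa\alpha}$, after which a union bound over the $O(N^3)$ cells (choosing $\kappa>3/(4\alpha)$) gives $P(B_K^N(\delta'))\to 0$. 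This is genuinely new probabilistic input beyond Lemmas \ref{Est-mon}--\ref{Kol-Lem-2}; your proposal does not supply it.
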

\begin{proof}
Recall that $\bar{\Phi}^N(t, 0)= \bar{\Phi}^N_t(0)$ and the Lemmas
\ref{Est-mon} and \ref{Kol-Lem-1} let us easily observe that for each
$t\leq T$, $\bar{\Phi}^N(t, \cdot)$ satisfies the estimates in $(1)$ of
Aldous-Kurtz's conditions. 

In the second step we have to show that condition (2) is also satisfied by
$\bar{\Phi}^N(t, u)$.
To formulate our proof, we will consider the  following  metric on $C(\R)$:
\begin{eqnarray*}
d(w_1, w_2):= \sum_{n\in \mathbb{N}} 2^{-n} \Big(1\wedge \sup_{u\in
[-n, n]} |w_1(u) -w_2(u)| \Big), \ w_1, \ w_2 \in C(\R).
\end{eqnarray*}
It is clear that $C(\R)$ equipped with the metric $d(\cdot,
\cdot)$ is complete and separable.
For each $\de >0$ define
\[
A^N(\de )  :=\sup_{t\in [0, T]}d \bigl(\bar{\Phi}^N(t+\de , \cdot),
\bar{\Phi}^N(t, \cdot) \bigr).
\]
It is clear that
$E \bigl[d(\bar{\Phi}^N(t+\de,\cdot),\bar{\Phi}^N(t,\cdot)) \big|
\mathcal{F}_t \bigr]
\le E \bigl[A^N(\de) \big|\mathcal{F}_t \bigr].$
Thus it is enough to show
\begin{equation}\label{tightPhi-1-3}
 \lim_{\de\downarrow 0}\limsup_{N\to\infty} E \bigl[A^N(\de) \bigr]=0
\end{equation}
in order to complete the proof. For any $\de '>0$, we have that
\begin{align*}
& E\Bigl[\sup_{t\in [0, T]} \bigl\{1\wedge \sup_{u\in [-K,K]}
|\bar{\Phi}^N(t+\de , u) - \bar{\Phi}^N(t, u)| \bigr\} \Bigr] \\
&\leq P(B_K^N(\de ')) + E\Bigl[\sup_{t\in [0, T]} \sup_{u\in
[-K,K]} |\bar{\Phi}^N(t+\de , u) -
\bar{\mathbf{\Phi}}^N(t+\de , u)|, B_K^N(\de ')^c \Bigr] \notag \\
 &\quad+ E\Bigl[\sup_{t\in [0, T]} \sup_{u\in [-K,K]}
|\bar{\Phi}^N(t, u) - \bar{\mathbf{\Phi}}^N(t, u)|, B_K^N(\de ')^c
\Bigr] \notag \\
 &\quad+ E\Bigl[\sup_{t\in [0, T]} \sup_{u\in [-K,K]}
|\bar{\mathbf{\Phi}}^N(t+\de , u) - \bar{\mathbf{\Phi}}^N(t, u)|,
B_K^N(\de ')^c \Bigr],
\end{align*}
where $B_K^N(\de ')$ is defined in  Lemma
\ref{errorphi} below. Then, by Lemma \ref{Kol-Lem-2}, we
see that
\[
E\Bigl[\sup_{t\in [0, T]}\bigl\{1\wedge \sup_{u\in [-K,K]}
\bigl|\bar{\Phi}^N(t+\de , u) - \bar{\Phi}^N(t, u) \bigr| \bigr\}
\Bigr]
\leq P(B_K^N(\de )) + \tilde{\de } + C\sqrt{N}(\e^{-1}-1)
\]
with $\tilde{\de } =2\de ' + C \de ^{\a }$. This implies
\eqref{tightPhi-1-3} because $\de '$ and $K$
are arbitrary and $P(B_K^N(\de )) \to 0$ by Lemma
\ref{errorphi} (see below).
\end{proof}
As a last step let us formulate the lemma, needed in the proof of
Proposition \ref{tightPhi-1} above. This
lemma tells us that the processes $\bar{\Phi}^N(t, u)$ and
$\bar{\mathbf{\Phi}}^N(t, u)$ are uniformly close.
\begin{lem}\label{errorphi}
For any $\de '>0$ and $K \in \mathbb{N}$,  consider the
following event:
\[
B_K^N(\de ') := \Bigl\{\sup_{t\in [0, T]}\sup_{u\in [-K,
K]}|\bar{\Phi}^N(t, u) - \bar{\mathbf{\Phi}}^N(t, u)| \geq \de '
\Bigr\}.
\]
Then we have that $\lim_{N \to \infty} P(B_K^N(\de '))=0$.
\end{lem}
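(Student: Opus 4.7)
The plan is to reduce the uniform closeness of $\bar{\Phi}^N$ and $\bar{\mathbf{\Phi}}^N$ to controlling the short-time oscillation of $\bar{\Phi}^N$ on the intervals $I_k := [k/N^2, (k+1)/N^2]$. Because $\bar{\mathbf{\Phi}}^N(\cdot, u)$ is the linear (in $t$) interpolant of $\{\bar{\Phi}^N(k/N^2, u)\}_k$ and coincides with $\bar{\Phi}^N$ at each grid time $k/N^2$, for every $t \in I_k$ and $u \in \R$
\[
|\bar{\Phi}^N(t, u) - \bar{\mathbf{\Phi}}^N(t, u)|
\le 2 \sup_{s \in I_k} |\bar{\Phi}^N(s, u) - \bar{\Phi}^N(k/N^2, u)|.
\]
Moreover, by the definition \eqref{DefLinInterPhi}, $\bar{\Phi}^N(s, u) - \bar{\Phi}^N(k/N^2, u)$ is piecewise linear in $u$, so its supremum over $u \in [-K, K]$ is attained on the lattice $\{x/N : |x| \le NK+1\}$. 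Thus $B_K^N(\de')$ is contained in the event
\[
\bigl\{\max_{k \le [N^2 T],\,|x| \le NK+1}\, \sup_{s \in I_k} |\bar{\Phi}^N_s(x) - \bar{\Phi}^N_{k/N^2}(x)| \ge \de'/2\bigr\}.
\]

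Next, introduce a finer time subgrid $s_{k, l} := k/N^2 + l\, N^{-2-\ga}$, $l = 0, 1, \ldots, [N^\ga]$, inside each $I_k$ for a small $\ga > 0$. The moment bound \eqref{barphi-4} together with the asymptotic $\sqrt{N}(\e^{-1}-1) = O(N^{-1/2})$ (Lemma 3.2 of \cite{FS}) yields, for any $\a < 1/4$,
\[
E\bigl[|\bar{\Phi}^N_{s_{k, l}}(x) - \bar{\Phi}^N_{k/N^2}(x)|^{2\k}\bigr]
\le C e^{\tilde\k \b K}\bigl(N^{-4\k \a} + N^{-\k}\bigr),
\]
uniformly in $(k, l, x)$. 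Markov's inequality and a union bound over the $O(N^{3+\ga})$ triples reduce the probability of the grid event to $C_{\de'} N^{3+\ga}(N^{-4\k \a} + N^{-\k})$, which tends to $0$ when $\k$ is chosen large enough and $\a$ close to $1/4$.

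It remains to absorb the residual oscillation of $\bar{\Phi}^N_s(x) - \bar{\Phi}^N_{s_{k, l-1}}(x)$ on each subinterval $[s_{k, l-1}, s_{k, l}]$ of length $N^{-2-\ga}$. Decompose via the SDE \eqref{barphi-1}. For the martingale piece, Doob's $L^{2\k}$ inequality combined with the quadratic-variation bound \eqref{eq:4.SDE-1-1} and Lemma \ref{Lem4.8} yields an $L^{2\k}$-norm of order $e^{CK} N^{-\k(1+\ga)/2}$, which is negligible after another union bound over $(k, l, x)$. For the drift, the $c_N \bar{\Phi}^N$ term is trivially small from $|c_N| \le C$, $|I| = N^{-2-\ga}$, and the crude $|\bar{\Phi}^N| = O(\sqrt{N})$; the $N^2 \e^{1/2}\De \bar{\Phi}^N$ term is bounded through the spatial H\"older regularity afforded by Lemma \ref{Kol-Lem-1}, which forces $|\De \bar{\Phi}^N_\tau(x)| = O(N^{-\a'})$.

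The main obstacle is the additive constant $(\sqrt{N}(\e^{-1}-1))^{2\k}$ appearing in \eqref{barphi-4}: it vanishes with $N$ but does not scale with the time increment, which blocks a clean application of Kolmogorov's continuity theorem. It forces $\k$ to be taken large enough that $N^{-\k}$ dominates the union-bound factor $N^{3+\ga}$, and $\a$ close enough to $1/4$ that $4\k\a > 3+\ga$. The delicate bookkeeping is to balance $\k$, $\a < 1/4$ and $\ga > 0$ consistently throughout all three steps.
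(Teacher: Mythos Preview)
Your approach is correct, but it takes a more involved route than the paper's. Both proofs cover $[-K,K]$ by $O(N)$ lattice points, cover $[0,T]$ by the $O(N^2)$ intervals $I_k=[k/N^2,(k+1)/N^2]$, and finish via Chebyshev plus a union bound over $O(N^3)$ cells. The difference is in how the short-time oscillation $\sup_{s\in I_k}|\bar\Phi^N_s(x)-\bar\Phi^N_{k/N^2}(x)|$ is controlled.

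You stay with the SDE \eqref{barphi-1} for $\bar\Phi^N$, whose drift carries the awkward term $N^2\e^{1/2}\De\bar\Phi^N$; to neutralise the $N^2$ you insert a finer subgrid of mesh $N^{-2-\ga}$, use \eqref{barphi-4} on the subgrid points, and then invoke the spatial H\"older estimate of Lemma~\ref{Kol-Lem-1} (i.e.\ \eqref{barphi-3}) to make $\De\bar\Phi^N_\tau(x)$ small in moments. The paper instead writes $\bar\Phi^N_t(x)=\sqrt{N}(\bar\zeta^N_t(x)-\bar\om^N_t(x))$, bounds the deterministic piece trivially by $|\bar\om^N_{k/N^2}(x)-\bar\om^N_t(x)|\le N^{-1}$, and for $\zeta^N$ uses the direct generator estimate $|\bar L^N\zeta^N_s(x)|\le CN\zeta^N_s(1)$. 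This linear-in-$N$ bound (as opposed to the $N^2$ in your drift) immediately yields
\[
E\Bigl[\sup_{s\in I_k}\bigl|\zeta^N_s(x)-\zeta^N_{k/N^2}(x)\bigr|^{2\k}\Bigr]\le CN^{-2\k},
\]
with no subgrid and no spatial regularity input needed. The cost of your route is the extra layer of bookkeeping in $\k,\a,\ga$; the benefit is that it uses only the abstract moment estimates \eqref{barphi-3}--\eqref{barphi-4} and would transfer to any process satisfying them, whereas the paper's shortcut exploits the specific structure of $\zeta^N$. One small remark: for the martingale increment on each subinterval you write ``Doob's $L^{2\k}$ inequality'', but since $\bar M^N$ has jumps you actually need Burkholder's inequality (Lemma~\ref{BurkIneq}), which adds a jump contribution of order $N^{-\k}$---harmless after the union bound, but it should be stated.
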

\begin{proof}
Set
\[
I=\bigl\{(k, x): k=0,1,2, \cdots, [N^2T],\ x\in \mathbb{Z}\
\text{ s.\,t. } \min_{u \in [-K, K]} |Nu -x|\leq 1 \bigr\}.
\]
It is easy to
see that the number of the elements in $I$ is bounded from above by
$CN^{3}$, that is, $\#I \leq CN^{3}$. Based on this  observation, let us
first show that for
any $(k, x) \in I$
\begin{equation}\label{errorphi-1}
E \Bigl[\sup_{N^2t \in [k, k+1]}\sup_{Nu\in [x, x+1]}
|\bar{\Phi}^N(t, u)-\bar{\mathbf{\Phi}}^N(t, u)|^{2\k } \Bigr] \leq
CN^{-4\k\a }, \quad  \a  <1/4,
\end{equation}
where $C$ is a generic constant and is independent of $N$, $k$ and $x$. From
the definitions of $\bar{\Phi}^N(t, u)$ and $\bar{\mathbf{\Phi}}^N(t,
u)$, we easily see that
\begin{align*}\label{errorphi-2}
 |\bar{\Phi}^N(t, u)-& \bar{\mathbf{\Phi}}^N(t, u)| 
\leq  |\bar{\Phi}^N \bigl( \tfrac{k}{N^2}, x+1 \bigr)-\bar{\Phi}^N(t,
x+1)| +
|\bar{\Phi}^N \bigl(\tfrac{k+1}{N^2}, x \bigr)-\bar{\Phi}^N(t, x)|
\notag \\
& + |\bar{\Phi}^N \bigl(\tfrac{k}{N^2}, x \bigr)-\bar{\Phi}^N
\bigl(\tfrac{k +1}{N^2},x \bigr)| + |\bar{\Phi}^N \bigl(\tfrac{k}{N^2},
x+1 \bigr)-\bar{\Phi}^N \bigl(\tfrac{k+1}{N^2}, x+1 \bigr)|.
\end{align*}
By the definition of $\bar{\Phi}^N(t, u)$ and \eqref{barphi-4}, for
some $\a  <1/4$, we observe that
\begin{equation}\label{errorphi-2-0}
E\Bigl[\bigl|\bar{\Phi}^N \bigl(\tfrac{k}{N^2}, x\bigr) - \bar{\Phi}^N
\bigl(\tfrac{k+1}{N^2}, x \bigr) \bigr|^{2\k } +
\bigl|\bar{\Phi}^N \bigl(\tfrac{k}{N^2}, x+1 \bigr) - \bar{\Phi}^N
\bigl(\tfrac{k+1}{N^2}, x+1 \bigr) \bigr|^{2\k } \Bigr] \leq \frac{C}
{N^{4\k\a }}.
\end{equation}
In the following, to  conclude the proof of
\eqref{errorphi-1}, we first show that
\begin{equation}\label{errorphi-2-1}
E \Bigl[\sup_{t\in I_N(k)}
\bigl|\bar{\Phi}^N \bigl(\tfrac{k}{N^2}, x+1 \bigr)-\bar{\Phi}^N(t, x+1)
\bigr|^{2\k } \Bigr]\leq  CN^{-\k},
\end{equation}
where $I_N(k) = [\frac{k}{N^2}, \frac{k+1}{N^2}]$. By the definition of
$\bar{\Phi}_t^N(x)$, it follows that the left side of \eqref{errorphi-2-1}
is bounded from above by
\[
CE\Bigl[\sup_{t\in I_N(k)}\Bigl(\sqrt{N}\bigl(\zeta_{\frac{k}
{N^2}}^N(x+1)-
\zeta_t^N(x+1) \bigr)\Bigr)^{2\k } \Bigr] + C\sup_{t\in
I_N(k)}\Bigl(\sqrt{N}
\bigl(\bar{\om}_{\frac{k}{N^2}}^N(x+1)-
\bar{\om}_t^N(x+1) \bigr)\Bigr)^{2\k }.
\]
It is known  that
\[
\sup_{t\in I_N(k)}\bigl|\bar{\om}_{\frac{k}{N^2}}^N(x+1)-
\bar{\om}_t^N(x+1)\bigr|\leq N^{-1}.
\]
Hence, to show \eqref{errorphi-2-1}, it is enough to prove that there exists
a constant $C$ such that
\begin{equation}\label{errorphi-2-2}
E\Bigl[\sup_{t\in I_N(k)}\bigl(\zeta_{\frac{k}{N^2}}^N(x+1)-
\zeta_t^N(x+1)
\bigr)^{2\k } \Bigr]\leq CN^{-2\k}.
\end{equation}
To show this, we will use the martingale approach. For each $x \in
\mathbb{N}$, we have that
\[
\zeta_t^N(x)=\zeta_0^N(x) +\int_0^t \bar{L}^N
\zeta_s^N(x)ds +M_t^N(x),
\]
where  
\begin{align*}
\bar{L}^N
\zeta_s^N(x) = & N^2 \bigl(\e c_+(x-1, \eta_s^N)+
c_-(x-1,\eta_s^N) \bigr)\zeta_s^N(x) \Bigl( e^{-\log \e
(\eta_s^N(x-1) -\eta_s^N(x))} -1 \Bigr),   x\geq 2, \\
\bar{L}^N
\zeta_s^N(1) = & N^2 \bigl(\e 1_{ \{\eta_s^N(1)=0\}}
+1_{\{\eta_s^N(1)=1\}} \bigr) \zeta_s^N(1)\Bigl(e^{-\log \e
(1 -2\eta_s^N(1))} -1 \Bigr).
\end{align*}
From the expression of $\bar{L}^N
\zeta_s^N(x)$, it is easy to deduce that there exists a constant $C$ such
that for any $x \in \mathbb{N}$ $\bar{L}^N \zeta_s^N(x) \leq C
N\zeta_s^N(x)$ and thus, it follows that
\[
 E\Bigl[\sup_{t\in I_N(k)}   \Bigl|\int_{\frac{k}{N^2}}^t \bar{L}^N
\zeta_s^N(x+1)ds \Bigr|^{2\k} \Bigr] \leq    C
N^{-2\k}E\Bigl[\sup_{t\in I_N(k)}
\zeta_t^N(1)^{2\k} \Bigr].
\]
Since  $a_N$ and $b_N$ converge to $\b ^2$ as $N\to \infty$,
\eqref{eq:4.SDE-1-1} yields that for any $x\in \mathbb{N}$
\[
d\lan M^N(x) \ran_s \leq C \zeta_s^N(x)^2 ds,
\]
and then, by Lemma \ref{BurkIneq} and \eqref{mom-10},
\begin{align*}
 & E\Bigl[\sup_{t\in I_N(k)
}|M_{\frac{k}{N^2}}^N(x+1) -M_{t}^N(x+1)|^{2\k } \Bigr] 
\leq CE\Bigl[\sup_{t\in I_N(k) }|
M^N_{t}(x+1) - M^N_{t-}(x+1)|^{2\k } \Bigr]\\
  &   + CE\Bigl[\sup_{t\in I_N(k)
}\Bigl(\lan M^N(x+1)\ran_{t} -\lan
M^N(x+1)\ran_{\frac{k}{N^2}}\Bigr)^{\k } \Bigr] \leq
\frac{C}{N^{2\k}}\Bigl( E \Bigl[\sup_{t\in I_N(k)} \zeta_t^N(1)^{2\k}
\Bigr] + 1 \Bigr).
\end{align*}
Therefore, by Lemma \ref{Lem4.8}, we can show \eqref{errorphi-2-2}.
A similar argument yields
\begin{equation}\label{errorphi-2-2-2}
 E\Bigl[\sup_{t\in I_N(k)} \bigl|
 \bar{\Phi}^N \bigl(\tfrac{k+1}{N^2}, x \bigr)-\bar{\Phi}^N(t, x) \bigr
|^{2\k} \Bigr]
 \leq C N^{-\k}.
\end{equation}
So, by \eqref{errorphi-2-0}-\eqref{errorphi-2-2-2}, we  can complete the
proof of \eqref{errorphi-1}.

Finally, the proof can be  concluded by \eqref{errorphi-1} and Chebyshev's
inequality. In fact,
\begin{align*}
& P(B_K^N(\de ')) \leq (\de ')^{-2\k }E\Bigl[\sup_{t\in [0,
T]}\sup_{u\in [-K, K]}|\bar{\Phi}(t, u) - \bar{\mathbf{\Phi}}(t,u)|^{2\k } \Bigr]\\
\leq& \sum_{(k, x) \in I}(\de ')^{-2\k }E \Bigl[\sup_{N^2t \in
[k, k+1]}\sup_{Nu\in [x, x+1]} |\bar{\Phi}^N(t, u)-
\bar{\mathbf{\Phi}}^N(t, u)|^{2\k } \Bigr] \leq CN^{3-4\k\a },
\end{align*}
which implies the result by taking  $\k> \frac{3}{4\a }$ and then letting
$N\to \infty$.
\end{proof}
\subsubsection{Derivation of the SPDE \eqref{eq:SPDE-4.3}}\label{subsec-4.3.3}
Taking a test function $g \in C^2_0(\R)$ and by
\eqref{barphi-1} and the definition of $\bar{\Phi}^N(t,u)$,  we arrive at
\begin{equation} \label{eq:3.2.1b}
 \lan\bar{\Phi}^N(t,\cdot), g\ran = \frac1N \sum_{x\in\Z}
\bar{\Phi}_0^N(x)g \bigl(\tfrac{x}{N} \bigr)
  + \int^t_0 b^N(\bar{\Phi}_s^N,g)ds + \sqrt{N} \bar{M}_t^N(g) +
  R_t^N,
\end{equation}
where  $\lan\bar{\Phi}^N(t,\cdot), g\ran= \int_{\R}
\bar{\Phi}(t, u)g(u) du$,
\begin{align*}
b^N(\bar{\Phi}_s^N,g) = & \frac1N \sum_{x\in\Z} N^2\e^{1/2}\De
g \bigl(\tfrac{x}{N} \bigr) \bar{\Phi}_s^N(x)
 + \frac{c_N}{N}\sum_{x\in\Z} g \bigl(\tfrac{x}{N}
\bigr)\bar{\Phi}_s^N(x),  \\
\sqrt{N}\bar{M}_t^N(g)=& \frac{1}{\sqrt{N}} \sum_{x\in\Z}
\bar{M}_t^N(x) g \bigl(\tfrac{x}{N}\bigr),
\end{align*}
and $R_t^N$ is an error term, i.\,e. $R_t^N= \lan\bar{\Phi}^N(t,\cdot),
g\ran -\frac1N \sum_{x\in \mathbb{Z}} \bar{\Phi}_t^N(x)g
\bigl(\tfrac{x}{N} \bigr)$.

We first deal with the error term $R_t^N$. It is easy to show that $R_t^N$
is bounded from above by
\[
\int_{\R} \bigl|(\bar{\Phi}_t^N([Nu])
-\bar{\Phi}_t^N([Nu]+1))g(u) \bigr|du +
N^{-1}\|g'\|_{\infty}\int_{{\rm supp}(g)}
|\bar{\Phi}_t^N([Nu])|du.
\]
Thus, we easily see that
$R_t^N$ converges to $0$ in $L^{2\k }(\Omega)$. In fact, by Lemma
\ref{Lemma-barphi-1}, we have
\[
E\Bigl[\Bigl(\int_{\R} \bigl|\bigl(\bar{\Phi}_t^N([Nu])
-\bar{\Phi}_t^N([Nu]+1)\bigr)g(u)\bigr|du \Bigr)^{2\k }
\Bigr] \leq
CN^{-2\k\a }\bigl(\|g\|_{\infty}|{\rm supp}(g)|\bigr)^{2\k},
\]
which goes to $0$ as $N\to \infty$,  and for the second term, we can
apply Lemma \ref{Est-mon}.

Use \eqref{eq:4.SDE-1-1} for the martingale term $\sqrt{N}\bar{M}_t^N(g)$
and observe that
\begin{align*}
\frac{d}{dt} \lan & \sqrt{N}  \bar{M}^N (g)\ran_t  = \frac{1}N
\zeta_t^N(1)^2 e^{-(\log\e)} \Bigl( a_N
1_{\{\eta_t^N(1)=0\}} + b_N1_{\{\eta_t^N(1)=1\}}\Bigr)g \bigl( \tfrac1N
\bigr)^2 \\
 + & \frac{1}N \sum_{x=2}^\infty \zeta_t^N(x)^2 e^{-(\log\e)x} \Bigl(
a_N
c_+(x-1,\eta_t^N ) + b_N  c_-(x-1,\eta_t^N ) \Bigr)  \times \Bigl(g
\bigl( \tfrac{x}{N} \bigr) +g \bigl(-\tfrac{x}{N} \bigr) \Bigr)^2,
\end{align*}
which converges as $N\to\infty$ to
\[
\b^2 \int_{\R_+}  e^{\b u} \om(t,u)^2 \cdot
2\rho_R(t,u)(1-\rho_R(t,u)) \bigl(g(u)+g(-u)\bigr)^2 du,
\]
by the hydrodynamic limit, i.e., by Corollary 5.3 of \cite{FS}.

Now let us state the following lemma, which concludes the proof of
Proposition \ref{prop:4.4}.

\begin{lem}\label{lem:4.17}
There exists a $Q$-cylindrical Brownian motion $\bar{W}$ with the covariance
determined by \eqref{Q-Wiener} such that the weak limit of $\sqrt{N}
\bar{M}_t^N(g)$ as $N\to\infty$ has the same law as that of the process
\[
\b  \int_0^t\int_{\mathbb{R}}
 e^{\b |u|/2} \om(s,|u|)\sqrt{2\rho_R(s,|u|)(1-\rho_R(s,|u|))} g(u)\bar{W}
(dsdu).
\]
Therefore, the limit of $\bar{\Phi}^N(t,u)$ is characterized by the SPDE
\eqref{eq:SPDE-4.3}.
\end{lem}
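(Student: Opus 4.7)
The plan is to pass to the limit in the identity \eqref{eq:3.2.1b} by applying a martingale central limit theorem to $\sqrt{N}\bar M^N_t(g)$, realizing the limiting Gaussian martingale as a stochastic integral against a $Q$-cylindrical Brownian motion $\bar W$, and combining this with the convergence of the drift and the vanishing of $R^N_t$ already established in the paragraph following \eqref{eq:3.2.1b}. Tightness of $\bar\Phi^N$ is supplied by Proposition \ref{tightPhi-1}, so after Skorokhod's representation we may assume $\bar\Phi^N\to\bar\Phi$ a.\,s. locally uniformly, and identify the limit of each term in \eqref{eq:3.2.1b}.

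First I would verify the hypotheses of a martingale central limit theorem for the family $(\sqrt{N}\bar M^N_\cdot(g))_N$. The jumps are uniformly small: \eqref{mom-10} together with Lemma \ref{Lem4.8} gives
\[
E\Bigl[\sup_{t\le T}\bigl|\sqrt{N}\bar M^N_t(g)-\sqrt{N}\bar M^N_{t-}(g)\bigr|^{2\k}\Bigr]\le CN^{-\k}\|g\|_\infty^{2\k}\to 0,
\]
while the predictable bracket computed just before the lemma converges in probability, via the hydrodynamic limit (Corollary 5.3 of \cite{FS}) together with $\zeta_t^N(x)^2 e^{-(\log\e)x}\sim\tilde\zeta^N(t,x/N)^2 e^{\b x/N}$ and $\e=1-\b/N+O(N^{-2}\log N)$, to the deterministic limit
\[
C_t(g,g):=\b^2\int_0^t\!\int_{\R_+} e^{\b u}\om(s,u)^2\cdot 2\rho_R(s,u)(1-\rho_R(s,u))\bigl(g(u)+g(-u)\bigr)^2\,du\,ds.
\]
Polarization upgrades this to joint convergence in finitely many test functions, so $\sqrt{N}\bar M^N(g)$ converges in law on $D([0,T],\R)$ to a continuous centered Gaussian martingale $M(g)$ with deterministic cross brackets $C_t(g_1,g_2)$ bilinear in the arguments.

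Next I would realize $M$ as a stochastic integral against a $Q$-cylindrical Brownian motion. Setting $\sigma(s,u):=\b e^{\b|u|/2}\om(s,|u|)\sqrt{2\rho_R(s,|u|)(1-\rho_R(s,|u|))}$, which is even in $u$, an elementary symmetrization yields $C_t(g_1,g_2)=\int_0^t\lan\sigma(s)g_1,Q(\sigma(s)g_2)\ran ds$. A $Q$-cylindrical Brownian motion $\bar W$ can be constructed from a Brownian sheet $W$ on $[0,\infty)\times\R_+$ by declaring $\int\phi(s,u)\,\bar W(dsdu):=\int_0^\infty\!\int_{\R_+}\bigl(\phi(s,u)+\phi(s,-u)\bigr)W(dsdu)$; this produces exactly the covariance \eqref{Q-Wiener}, and the stochastic integral against $\sigma(s,u)g(u)\,\bar W(dsdu)$ then has the same covariance as $M(g)$. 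On a possibly enlarged probability space we may therefore represent $M(g)$ by this integral. Passing to the limit in \eqref{eq:3.2.1b}, using $N^2\e^{1/2}\Delta g(x/N)\to g''$ uniformly and $c_N\to-\b^2/4$ for the drift and the vanishing of $R^N_t$ for the remainder, yields the weak form \eqref{eq:4.10} of \eqref{eq:SPDE-4.3}. The main obstacle I anticipate is this last identification: since the map $g\mapsto Q(\sigma g)$ has nontrivial kernel (its antisymmetric part), the Gaussian field $M$ alone does not pin down $\bar W$, so one must enlarge the probability space with an independent noise to produce a genuine $Q$-cylindrical Brownian motion—precisely the construction alluded to in the remark following Proposition \ref{prop:4.4}.
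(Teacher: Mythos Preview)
Your proposal is correct and follows essentially the same route as the paper: tightness from Proposition \ref{tightPhi-1}, small jumps plus convergence of the predictable bracket to $C_t(g,g)$ to identify the limiting Gaussian martingale, then representation as a stochastic integral against a $Q$-cylindrical Brownian motion on an enlarged probability space. The paper carries out this last step concretely via a Walsh-type martingale measure $\mathbf{M}(t,A)$, setting $\mathbf{W}_t(g)=\int (g/\psi)\,1_{\{\psi\neq 0\}}\,d\mathbf{M}+\int 1_{\{\psi=0\}}g\,d\bar{\mathbf{W}}$ for an independent $Q$-noise $\bar{\mathbf{W}}$; note that the enlargement is forced by the zero set of $\psi$, not by the odd-function kernel of $Q$ you mention, since a $Q$-cylindrical Brownian motion is itself degenerate on odd test functions and no information is lost there.
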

\begin{proof}
Let us consider
\begin{align*}
\mathbf{M}_t^N(g)=& \frac1N \sum_{x\in \mathbb{Z}}
\bar{\Phi}_t^N(x)g \bigl(\tfrac{x}{N} \bigr)-\frac1N \sum_{x\in\Z}
\bar{\Phi}_0^N(x) g \bigl(\tfrac{x}{N} \bigr)-\int^t_0
b^N(\bar{\Phi}_s^N,g)ds, \\
\bar{\mathbf{M}}_t^N(g)= &\bigl(\mathbf{M}_t^N(g)\bigr)^2- \lan \sqrt{N}
\bar{M}^N(g) \ran_t.
\end{align*}
Here, $\mathbf{M}_t^N(g)$ is nothing but $\sqrt{N} \bar{M}_t^N(g)$
appeared in \eqref{eq:3.2.1b}. However, to make the explanation of the proof
clear, we introduce this notation. From the definition of
$\bar{\Phi}_t^N(x)$, we know that both of the above processes are
martingales. Let $\mathcal{P}$ be a limit point of the sequence ${P}^N$,
the distribution of $\bar{\Phi}^N(t, \cdot)$ on $D([0,T], C(\mathbb{R}))$.
Then, it is clear that $\mathcal{P}$ is concentrated on $C([0, T],
C(\mathbb{R}))$ from Lemma \ref{Kol-Lem-2}. In the following, with some
abuse of notations, we will use $\Phi(t)$ to denote the canonical coordinate
process on $C([0, T], C(\mathbb{R}))$. Assume $F$ denotes an arbitrary
$D([0, s], C(\mathbb{R}))$-measurable function defined on $D([0, T],
C(\mathbb{R}))$ with continuous and bounded restriction on $C([0, T],
C(\mathbb{R}))$. From the explanations at the beginning of this subsection,
for $0\leq s< t\leq T$, letting $N \to \infty$, we can show
\begin{align*}
E^{\mathcal{P}}\bigl[(\mathbf{M}_t(g)-\mathbf{M}_s(g))F \bigr] = &
\lim_{N \to
\infty}
E^{{P}^N} \bigl[(\mathbf{M}_t^N(g)-\mathbf{M}_s^N(g))F \bigr]=0, \\
E^{\mathcal{P}} \bigl[(\bar{\mathbf{M}}_t(g)-\bar{\mathbf{M}}_t(g))F
\bigr] = &
\lim_{N \to \infty}
E^{P^N} \bigl[(\bar{\mathbf{M}}_t^N(g)-\bar{\mathbf{M}}_t^N(g))F
\bigr]=0,
\end{align*}
where $E^{\mathcal{P}}$ denotes the expectation with respect to
$\mathcal{P}$,
\begin{align}\label{margsol-1}
\mathbf{M}_t(g):=&\lan \Phi(t),g \ran - \lan \Phi(0), g
\ran - \int_0^t \lan \Phi(s), g'' -\tfrac{\b ^2}{4}g\ran
ds,\\
\bar{\mathbf{M}}_t(g) :=& \bigl(\mathbf{M}_t(g)\bigr)^2 -\int_0^t
\int_\mathbb{R} \psi^2(s,u)g(u)\bigl(g(u)+g(-u)\bigr)du ds,
\label{margsol-2}
\end{align}
and $\psi(t, u)=\b\om(t,|u|) e^{\b|u|/2 } \cdot \sqrt{2\rho_R(t,|u|)
(1-\rho_R(t,|u|))} $ in this part. Therefore, we deduce that both of the
processes $\mathbf{M}_t(g)$ and $\bar{\mathbf{M}}_t(g)$ defined by
\eqref{margsol-1} and \eqref{margsol-2}, respectively,  are $\mathcal{P}$-
martingales.

Using a similar way to \cite{BG}, we call that a probability measure
$\mathcal{P}$ on $C([0, T], C(\mathbb{R}))$ is a martingale solution of
\eqref{eq:SPDE-4.3} if the law of $\Phi(0)$ under $\mathcal{P}$ coincides
with the law of $\mathbf{\Phi}_0$ under $P$ and for any test function $g$,
$\mathbf{M}_t(g)$ and $\bar{\mathbf{M}}_t(g)$ are $\mathcal{P}$-local
martingales. We refer to \cite{F} for another approach to study martingale
problems for SPDEs.

In the following, we will show that the martingale solution of
\eqref{eq:SPDE-4.3} is equivalent to its weak solution. To show this, we
associate a martingale measure $\mathbf{M}(t, A)$ on $[0,T] \times
\mathbb{R}$ to $\mathbf{M}_t(g)$. In other words, we will assume that
$\mathbf{M}(t, A)$ is a continuous worthy martingale measure, see
\cite{WAL},  with quadratic variational process
\[
\lan\mathbf{M}\ran(dtdu) = \psi^2(t,u)dt\nu(dv),
\]
where $\nu(A) =|A|+|-A|$ for any Borel subset $A$ of $\mathbb{R}$ and  $-A
:=\{-x:\  x\in A\}$. Let us consider a $Q$-cylindrical Brownian motion
$\bar{\mathbf{W}}$ with covariance defined by \eqref{Q-Wiener} such that
it is independent of $\mathcal{P}$. We remark that this can be realized by
extending the probability space and the corresponding filtration. However,
for the brevity of notation, we will still use $\mathcal{P}$ to denote the
extended  probability measure. Now set
\begin{equation}\label{margsol-3}
\mathbf{W}_t(g)= \int_0^t \int_\mathbb{R}
\frac{g(u)}{\psi(s,u)}1_{\{\psi(s,u)\neq 0 \}}\mathbf{M}(dsdu) +
\int_0^t\int_\mathbb{R}1_{\{\psi(s,u)= 0 \}}g(u)\bar{\mathbf{W}}(dsdu).
\end{equation}
From the symmetry of $\psi(t,u)$ in $u$ and the independence of
$\mathbf{M}$ and $\bar{\mathbf{W}}$, we see that
\[
E^{\mathcal{P}} \bigl[\mathbf{W}_t^2(g) \bigr]= \int_0^t
\int_\mathbb{R}
g(u) \bigl(g(u)+g(-u)\bigr) dsdu.
\]

Therefore, by L\'evy's martingale characterization theorem, we  know that
$\mathbf{W}_t$ is a $Q$-cylindrical Brownian motion with covariance
characterized  by \eqref{Q-Wiener} and
\[
\mathbf{M}_t(g)= \int_0^t \int_\mathbb{R} \psi(s,u)g(u)\mathbf{W}(dsdu).
\]
In fact, by the definition of $\mathbf{W}_t$, see \eqref{margsol-3}, we
have that
\begin{align*}
& \int_0^t \int_\mathbb{R} \psi(s,u)g(u)\mathbf{W}(dsdu) = \int_0^t
\int_\mathbb{R} \psi(s,u)\frac{g(u)}{\psi(s,u)}1_{\{\psi(s,u)\neq 0
\}}\mathbf{M}(dsdu) \\
&  +\int_0^t\int_\mathbb{R}1_{\{\psi(s,u)= 0
\}}\psi(s,u)g(u)\bar{\mathbf{W}}(dsdu)= \int_0^t
\int_\mathbb{R}g(u)\mathbf{M}(dsdu).
\end{align*}
Combining this with \eqref{margsol-1}, we obtain that
\[
\lan \Phi(t),g \ran = \lan \Phi(0), g \ran + \int_0^t \lan \Phi(s), g''
-\frac{\b ^2}{4}g\ran ds + \int_0^t \int_\mathbb{R}
\psi(s,u)g(u)\mathbf{W}(dsdu),
\]
which means that the martingale solution satisfies \eqref{eq:SPDE-4.3} in
its weak sense with the $Q$-cylindrical Wiener process $\mathbf{W}(t)$
constructed by \eqref{margsol-3} by the arbitrariness of $g$. In the end, we
remark that the martingale problem is well-posed, that is, the uniqueness
holds, which is clear from the uniqueness of the weak solution.
\end{proof}
\section{Invariant Measures of the SPDEs}
To compare our dynamic fluctuation results with the static fluctuations
formulated in Proposition \ref{pro-CLT-5.1} below, we explicitly compute the
invariant measures of the SPDEs \eqref{eq:SPDE-1} and \eqref{eq:SPDE-2}.
\subsection{Static Fluctuations}
First, we state a result for the fluctuations under grandcanonical ensembles
$\mu_U^{\e(N)}$ and $\mu_R^{\e(N)}$, which is in fact simpler than those
under canonical ensembles, see \cite{P}, \cite{FVY}, \cite{Y}. Let
$\psi_U$ and $\psi_R$ be the height functions of the Vershik curves:
\begin{align*}
\psi_U(u)& =-\frac{1}{\a} \log\big(1-e^{-\a u}\big),
\quad u \in \R_+^\circ,\\
\psi_R(u)& =\frac{1}{\b} \log\big(1+e^{-\b u}\big),
\quad u \in \R_+.
\end{align*}
Then, for the static fluctuations $\Psi_U^N(u)$ and $\Psi_R^N(u)$ defined
by
\begin{align*}
& \Psi_U^N(u) := \sqrt{N} \big(\tilde{\psi}^N(u) -\psi_U(u)\big),
\quad u \in \R_+^\circ, \\
& \Psi_R^N(u) := \sqrt{N} \big(\tilde{\psi}^N(u) -\psi_R(u)\big),
\quad u \in \R_+,
\end{align*}
we have the following proposition.
\begin{prop}\label{pro-CLT-5.1}
The fluctuation fields $\Psi_U^N(u)$ and $\Psi_R^N(u)$ weakly converge to
$\Psi_U(u)$ and $\Psi_R(u)$ under $\mu_U^{\e(N)}$ and $\mu_R^{\e(N)}$,
respectively, as $N\to\infty$, where $\Psi_U, \Psi_R$ are mean $0$
Gaussian processes with covariance structures
\begin{align*}
& C_U(u,v) = \frac1\a \rho_U(u \vee v), \quad u, v \in \R_+^\circ\\
& C_R(u,v) = \frac1\b \rho_R(u \vee v), \quad u, v \in\R_+,
\end{align*}
and $\rho_U = - \psi_U' (= \rho_U^\infty$ in \eqref{eq:3.rho_infty}),
$\rho_R = - \psi_R'$ are slopes of the Vershik curves, respectively, with
$u \vee v=\max\{u, v\}$.
\end{prop}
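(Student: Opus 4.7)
The plan is to exploit the product structure of the grandcanonical ensembles and reduce everything to a classical multivariate CLT for independent variables. Under $\mu_U^{\e(N)}$ the identity $n(p) = \sum_{x\ge 1} x\,\xi(x)$ with $\xi(x) := \psi_p(x-1)-\psi_p(x)$ factorises $\e^{n(p)}$ as $\prod_x \e^{x\xi(x)}$, so the height differences $\{\xi(x)\}_{x\in\N}$ are independent with geometric distribution $P(\xi(x)=k) = (1-\e^x)\e^{xk}$, $k\in\Z_+$. Analogously, under $\mu_R^{\e(N)}$ the occupations $\eta(x) = \sharp\{i:q_i=x\}$ are independent Bernoulli variables with mean $\e^x/(1+\e^x)$. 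Since $\psi_p(Nu) = \sum_{x>Nu}\xi(x)$, each $\Psi_U^N(u)$ (and similarly $\Psi_R^N(u)$) becomes a centred sum of independent random variables, and the whole proposition reduces to a multivariate CLT with the correct mean and covariance asymptotics.

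For the bias, the asymptotics $\e(N)=1-\a/N+O(\log N/N^2)$ give $\e^x/(1-\e^x) = \rho_U(x/N) + O(\log N/N)$ uniformly for $x/N$ on compact subsets of $\R_+^\circ$, so a Riemann-sum approximation yields $E[\tilde\psi^N(u)] = \psi_U(u) + O(\log N/N) = \psi_U(u) + o(N^{-1/2})$, which is negligible at the fluctuation scale. For the covariance, tail independence of the $\xi(x)$ gives, for $u\le v$,
\[
N\,\mathrm{Cov}\bigl(\tilde\psi^N(u),\tilde\psi^N(v)\bigr) = \frac1N \sum_{x>Nv}\mathrm{Var}[\xi(x)] \longrightarrow \int_v^\infty \rho_U(w)(1+\rho_U(w))\,dw = \frac{\rho_U(v)}{\a},
\]
where the last equality uses the identity $\rho_U'(w) = -\a\,\rho_U(w)(1+\rho_U(w))$, a direct check from $\rho_U(w) = 1/(e^{\a w}-1)$. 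The parallel RU-computation uses $\mathrm{Var}[\eta(x)] = \rho_R(x/N)(1-\rho_R(x/N)) + o(1)$ together with $\rho_R'(w) = -\b\,\rho_R(w)(1-\rho_R(w))$ to produce $\rho_R(u\vee v)/\b$.

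To conclude, fix any $u_1<\cdots<u_m$ and invoke the Cram\'er--Wold device: each linear combination $\sum_j c_j\Psi_U^N(u_j)$ has the form $N^{-1/2}\sum_x a_x^N(\xi(x)-E[\xi(x)])$ with uniformly bounded weights $a_x^N$, so the Lyapunov CLT applies via the moment bound $E|\xi(x)-E[\xi(x)]|^{2+\de} \le C(1-\e^x)^{-(2+\de)}$; the RU-case is easier since the Bernoulli variables are bounded. The main technical obstacle is the singularity of $\rho_U$ at $u=0$, which would obstruct a genuine process-level statement on all of $\R_+^\circ$; however, since the proposition only asserts convergence of finite-dimensional distributions at fixed arguments in $\R_+^\circ$, and on any compact $[u_0,\infty)\subset\R_+^\circ$ the moment, Riemann-sum and $\e^x\approx e^{-\a x/N}$ estimates above are uniform, the CLT goes through and gives the claimed Gaussian limits with covariance kernels $C_U$ and $C_R$.
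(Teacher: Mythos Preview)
Your argument is correct and follows exactly the line the paper sketches: the paper's proof consists of two sentences noting that under $\mu_U^\e$ the height differences $\xi(x)$ are independent geometrics with parameter $\e^x$, and under $\mu_R^\e$ the $\eta(x)$ are independent Bernoullis, leaving all further details to the reader. You have supplied precisely those details---the bias estimate via the $\e(N)$ asymptotics, the covariance computation via $\rho_U' = -\a\rho_U(1+\rho_U)$ and $\rho_R' = -\b\rho_R(1-\rho_R)$, and the Lyapunov/Cram\'er--Wold reduction---so there is nothing to add.
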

\begin{proof}
The proof is not difficult by noting the following facts. Under $\mu_U^\e$,
the height differences $\xi(x)(=\psi(x-1)-\psi(x)\ \text{or} \ \#\{i; p_i
=x\}), x\in \N$, are independent random variables, which are geometrically
distributed: $\mu_U^\e(\xi(x)=k)=a^k/(1-a)$ for $k \in \Z_+$ with
$a=\e^x$. On the other hand, under $\mu_R^\e$, the height differences
$\eta(x), x \in \N$, are independent and distributed as
$\mu_R^\e(\eta(x)=k)=a^k/(1+a)$ for $k=0,1$ with $a=\e^x$.
\end{proof}
\begin{rem}
{\rm (1)} As shown in {\rm \cite{Y}, \cite{FVY}}, the CLT under canonical
ensembles can be reduced from that under grandcanonical ensembles by removing
the effect of fluctuations of area.  \newline
{\rm (2)} The Gaussian process $\Psi_R$ satisfies $\Psi_R\in L_r^2(\R_+)$
a.s.\ for every $r>-\b/2$ ($L_r^2$ is defined also for $r<0$), since
$$
E[|\Psi_R|_{L_r^2(\R_+)}^2] = \int_0^\infty E[\Psi_R(u)^2] e^{-2ru}du
= \frac1{\b} \int_0^\infty \rho_R(u)e^{-2ru}du
$$
is finite if and only if $2r+\b>0$.
\end{rem}
\subsection{Uniform Case}
Let $Q_U$ be the differential operator
\[
Q_U= - \frac{\partial}{\partial u} \Bigl\{ \frac1{\rho_U(u)
(1+\rho_U(u))} \frac{\partial}{\partial u} \Bigr\}
\]
defined on $L^2(\R_+^\circ,du)$. Note that this operator does not require any boundary condition, see Remark \ref{rem:2.1}.
\begin{thm}\label{thm:Uinvm}
The Gaussian measure $N(0,Q_U^{-1})$ is the unique invariant measure of the
SPDE \eqref{eq:SPDE-1}, which appeared in Theorem \ref{thm:2.1}.
\end{thm}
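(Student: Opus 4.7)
The first observation is that at the fixed point of the deterministic dynamics, $\psi_U(t,\cdot) \equiv \psi_U^\infty$, so $\rho_U(t,u) \equiv \rho_U^\infty(u) = 1/(e^{\alpha u}-1)$ and the SPDE \eqref{eq:SPDE-1} becomes an autonomous linear equation -- an infinite-dimensional Ornstein--Uhlenbeck process $\partial_t\Psi_U = L\Psi_U + \sigma\,\dot W$ with
\[
L\phi = \bigl((1+\rho_U^\infty)^{-2}\phi'\bigr)' + \alpha(1+\rho_U^\infty)^{-2}\phi', \qquad \sigma^2 = \frac{2\rho_U^\infty}{1+\rho_U^\infty}.
\]
By classical OU theory, a centered Gaussian $N(0,C)$ is invariant if and only if the covariance operator $C$ satisfies the Lyapunov equation $LC + CL^* = -\sigma^2$.

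The heart of the proof is a direct check that $C = Q_U^{-1}$ solves this identity. Combining Proposition \ref{pro-CLT-5.1} with the key identity $(\rho_U^\infty)' = -\alpha\rho_U^\infty(1+\rho_U^\infty)$ identifies $Q_U^{-1}$ as the integral operator with kernel $C(u,v) = \tfrac{1}{\alpha}\rho_U^\infty(u\vee v)$. Differentiation yields $(Cf)'(u) = -\rho_U^\infty(u)(1+\rho_U^\infty(u))\int_0^u f(v)\,dv$; substituting into $L$, the two nonlocal pieces involving $\int_0^u f$ cancel thanks to $(\rho_U^\infty/(1+\rho_U^\infty))' = -\alpha\rho_U^\infty/(1+\rho_U^\infty)$, leaving
\[
L(Cf)(u) = -\frac{\rho_U^\infty(u)}{1+\rho_U^\infty(u)}f(u) = -\tfrac12\sigma^2(u)f(u).
\]
Since $C$ is symmetric and multiplication by $\sigma^2$ is self-adjoint, $CL^* = (LC)^* = -\tfrac12\sigma^2$ as well, hence $LC + CL^* = -\sigma^2$, as required.

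For uniqueness I would invoke Lemma \ref{lem:3.4}: since $\rho_U^\infty(u) \sim 1/(\alpha u)$ near $0$, the condition \eqref{eq:LB} holds at equilibrium, so weak solutions of \eqref{eq:SPDE-1} are unique in $C([0,T],C(\R_+^\circ)) \cap C([0,T],\tilde{L}_e^2(\R_+^\circ))$. The change of variables $\Psi_U(u) = \bar\Psi_U(\zeta^{-1}(u))/(1-\rho(\zeta^{-1}(u)))$ from Section \ref{sec:uniformproof}, now a deterministic time-independent bijection because $\rho$ is at equilibrium, then puts invariant measures of \eqref{eq:SPDE-1} in one-to-one correspondence with those of \eqref{eq:SPDE-Prop}; the latter is a linear SPDE on $\R$ with heat-type drift and bounded strictly positive diffusion, for which uniqueness of the Gaussian invariant measure is classical.

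The main obstacle is not the Lyapunov computation itself, which is routine, but rather ensuring that $N(0,Q_U^{-1})$ is genuinely supported on $\tilde{L}_e^2(\R_+^\circ)$ so that the uniqueness result of Lemma \ref{lem:3.4} applies; the weights $g_r$, behaving like $u^{1+2r/\alpha}$ near $0$ and $e^{-2ru}$ at infinity, are precisely adapted to the boundary singularity of $\rho_U^\infty$ and of $Q_U$ at $u=0$, as foreshadowed in Remark \ref{rem:3.1}. A consistency check is that the resulting covariance $\tfrac{1}{\alpha}\rho_U^\infty(u\vee v)$ matches the static fluctuation limit of Proposition \ref{pro-CLT-5.1}, which is the expected dynamic--static correspondence for a reversible invariant grandcanonical ensemble.
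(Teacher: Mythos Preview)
Your computation verifying the Lyapunov identity $LC + CL^* = -\sigma^2$ is correct and constitutes a legitimate proof that $N(0,Q_U^{-1})$ is invariant. However, the paper takes a genuinely different route. Rather than checking the Lyapunov equation against the explicit Green kernel, the paper observes the factorisation $A_U\Psi = -g_U(u)\,Q_U\Psi$ with $g_U = \rho_U/(1+\rho_U)$, which makes $A_U$ \emph{symmetric} in the weighted space $L_U^2 := L^2(\R_+^\circ, g_U^{-1}\,du)$. It then writes the mild solution, pairs with a test vector in $L_U^2$, and uses the Poincar\'e inequality (Lemma \ref{lem:5.3}) to get a spectral gap $A_U \le -c$. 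This gives $e^{tA_U}\Psi_0 \to 0$ and $E[I_t^2] \to \lan(-A_U)^{-1}\psi,\psi\ran_{L_U^2}$, from which the limiting Gaussian is read off as $N(0,Q_U^{-1})$. The virtue of the paper's route is that convergence from arbitrary initial data yields existence and uniqueness simultaneously, with no separate argument needed; the price is proving the Poincar\'e inequality, which requires a nontrivial weighted Hardy-type estimate near $u=0$. Your route trades this analytic lemma for an explicit ODE computation, which is arguably more elementary for the existence part, but your uniqueness step (transfer to \eqref{eq:SPDE-Prop} and appeal to ``classical'' results) is less self-contained: you still implicitly need a spectral gap or strong Feller/irreducibility argument on $\R$ to rule out non-Gaussian invariant measures, whereas the paper's convergence argument handles this directly.
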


\begin{proof}
Since $\rho(t,u)$ in the SPDE \eqref{eq:SPDE-1} converges as $t\to\infty$
to $\rho_U(u)$,
we may study the invariant measure of the SPDE:
\begin{equation}\label{eq:SPDE-3}
\partial_t\Psi(t,u) = A_U\Psi(t,u) + \sqrt{2g_U(u)} \dot{W}(t,u),
\end{equation}
where
\[
A_U\Psi(u) :=\Bigl(\frac{\Psi'(u)} {(1+\rho_U(u))^2}\Bigr)' + \a
\frac{\Psi'(u)}{(1+\rho_U(u))^2} \quad \text{and}\quad
g_U(u)=\frac{\rho_U(u)}{1+\rho_U(u)}.
\]
Note that one can rewrite the operator $A_U$ as
\[
A_U\Psi(u) = - g_U(u) Q_U\Psi(u).
\]
In particular, $A_U$ is symmetric in the space $L_U^2 :=
L^2(\R_+^\circ, 1/g_U(u) du)$. Let $e^{tA_U}$ be the semigroup generated by
$A_U$ on $L_U^2$. Then, the solution of the SPDE \eqref{eq:SPDE-3} can be
written in the mild form:
\[
\Psi_t = e^{tA_U}\Psi_0 + \int_0^t e^{(t-s)A_U}\sqrt{2g_U} dW_s.
\]
In particular, for every $\psi\in L_U^2$, we have
\[
\lan\Psi_t,\psi\ran_{L_U^2}  = \lan e^{tA_U}\Psi_0,\psi\ran_{L_U^2} +
\int_0^t \lan dW_s, \tfrac{1}{g_U} \sqrt{2g_U} e^{(t-s)A_U}\psi\ran_{L^2}
=: m_t + I_t.
\]
However, since $A_U$ on $L_U^2$ is unitary equivalent to $-Q_U$ on
$L^2(\R_+^\circ)$, Lemma \ref{lem:5.3} below implies $A_U\le -c$ with $c>0$
and therefore $m_t\to 0$ as $t\to\infty$, while
\begin{align*}
E \bigl[I_t^2 \bigr] & = \int_0^t \|\sqrt{\tfrac{2}{g_U}}e^{(t-
s)A_U}\psi \|_{L^2}^2 ds = 2 \int_0^t \|e^{sA_U}\psi \|_{L_U^2}^2 ds  \\
& = 2 \int_0^t \lan e^{2sA_U}\psi,\psi\ran_{L_U^2} ds \to 2 \lan
(-2A_U)^{-1}\psi,\psi\ran_{L_U^2} = \lan (-A_U)^{-1}\psi,\psi\ran_{L_U^2}
\end{align*}
as $t\to\infty$. This proves that $\lan\Psi_t,\psi\ran_{L_U^2}$ converges
weakly to $N(0, \lan (-A_U)^{-1}\psi,\psi\ran_{L_U^2})$ for every
$\psi\in L_U^2$, which is an equivalent formulation to $\lan\Psi_t,
\fa\ran_{L^2}$ converging weakly to $N(0, \lan (-A_U)^{-1}(\fa{g_U}),
\fa\ran_{L^2})$ by taking $\fa= {\psi}/{g_U}$.  However, $(-A_U)^{-1}
(\fa{g_U}) = Q_U^{-1}\fa$ and this implies the conclusion.
\end{proof}

\begin{rem}
Since $C_U(u,v)$ is the Green kernel of $Q_U^{-1}$,
this gives another proof of the static result in U-case.
\end{rem}

\begin{lem}(Poincar\'e inequality; U-case) \label{lem:5.3}
There exists $c>0$ such that $(f,Q_Uf)\ge c\| f\|^2$ holds for every $f\in
C^1(\R_+^\circ) \cap L^2(\R_+^\circ,du)$, where the inner product and the
norm are those of the space $L^2(\R_+^\circ,du)$.
\end{lem}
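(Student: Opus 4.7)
The plan is to reduce the weighted inequality to the classical Hardy inequality via an explicit substitution. Observe first that the weight $p(u) := 1/(\rho_U(u)(1+\rho_U(u)))$ equals $(e^{\alpha u/2} - e^{-\alpha u/2})^2 = 4\sinh^2(\alpha u/2)$, so $p$ vanishes quadratically at $u = 0$ and grows like $e^{\alpha u}$ at infinity. I may assume $(f, Q_U f) = \int_0^\infty p(u)|f'(u)|^2\,du < \infty$, else the inequality is trivial. The degeneracy of $p$ at $u=0$ is the main obstacle; the strong growth at $u=\infty$ conversely provides, for free, a vanishing boundary condition that allows Hardy's inequality to be invoked.

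First I would show $f(\infty) = 0$. Cauchy--Schwarz yields $\int_R^\infty |f'|\,du \le (\int_R^\infty du/p)^{1/2} (f, Q_U f)^{1/2}$, and the explicit computation $\int_R^\infty du/p(u) = \rho_U(R)/\alpha$ shows this tail tends to $0$ as $R \to \infty$; so $f$ has a limit at infinity, which must be $0$ since $f \in L^2(\R_+^\circ)$.

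Next I would change variables $r = \rho_U(u) = 1/(e^{\alpha u}-1)$, so that $dr/du = -\alpha r(1+r)$ and the map $u \mapsto r$ is a decreasing bijection of $(0,\infty)$ onto itself sending $u=\infty$ to $r=0$ and $u=0$ to $r=\infty$. Setting $h(r) := f(\rho_U^{-1}(r))$, direct substitution yields
\[
(f, Q_U f) = \alpha \int_0^\infty |h'(r)|^2\, dr, \qquad \|f\|_{L^2(\R_+^\circ)}^2 = \frac{1}{\alpha} \int_0^\infty \frac{|h(r)|^2}{r(1+r)}\, dr,
\]
and the previous step yields $h(0) = f(\infty) = 0$. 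Lemma \ref{lem:5.3} is thereby equivalent to the one-sided weighted Hardy estimate $\int_0^\infty |h(r)|^2/(r(1+r))\, dr \le C \int_0^\infty |h'(r)|^2\, dr$ for $h \in H^1(0,\infty)$ with $h(0) = 0$.

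Finally I would conclude by splitting the integral at $r=1$. For $r \ge 1$, the bound $1/(r(1+r)) \le 1/r^2$ together with classical Hardy $\int_0^\infty |h|^2/r^2\, dr \le 4\int_0^\infty |h'|^2\, dr$ handles that piece. For $r \le 1$, from $h(0) = 0$ and Cauchy--Schwarz, $|h(r)|^2 \le r \int_0^r |h'|^2\, ds$, and integrating against $dr/r$ gives $\int_0^1 |h|^2/r\, dr \le \int_0^\infty |h'|^2\, dr$; the pointwise bound $1/(r(1+r)) \le 1/r$ on $(0,1]$ controls the inner piece. Adding yields $C = 5$ and hence Lemma \ref{lem:5.3} with explicit constant $c = \alpha^2/5$.
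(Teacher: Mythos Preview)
Your proof is correct and takes a genuinely different route from the paper's. The paper works directly in the $u$ variable: it splits $\|f\|^2$ at $u=1$, writes $f(u)=-\int_u^\infty f'$ on $(1,\infty)$ and applies Cauchy--Schwarz with the weight $a_U(u)=\{u\rho_U(u)(1+\rho_U(u))\}^{-1}$, while on $(0,1)$ it uses $f(u)^2\le 2(f(u)-f(1))^2+2f(1)^2$ together with an ad hoc weighted Cauchy--Schwarz (powers $v^{\pm 3/2}$) and the elementary bound $(\rho_U(1+\rho_U))^{-1}\ge \a^2 e^{-\a}u^2$ on $[0,1]$. Your approach instead exploits the algebraic identity $p(u)=4\sinh^2(\a u/2)$ and the substitution $r=\rho_U(u)$, which simultaneously linearizes both integrals into $\a\int_0^\infty|h'|^2\,dr$ and $\a^{-1}\int_0^\infty|h|^2/(r(1+r))\,dr$; the whole lemma then collapses onto the classical Hardy inequality with $h(0)=0$. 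What you gain is a shorter argument and an explicit constant $c=\a^2/5$, compared with the paper's $c=(3C+8\a^{-2}e^\a)^{-1}$ with $C=\int_1^\infty u\rho_U(1+\rho_U)\,du$; what the paper's direct method buys is robustness, since it does not rely on the specific closed form of $\rho_U$ and would adapt more readily to perturbed weights.
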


\begin{proof}
We divide $\| f\|^2$ into a sum of integrals over $(1,\infty)$ and $(0,1]$,
and estimate them separately. We begin with the integral over $(1,\infty)$.
Set
\[
a_U(u) = \{u \,\rho_U(u)(1+\rho_U(u)) \}^{-1}, \quad u\in \R_+.
\]
Note that $a_U(u)>0$ and $C=\int_1^\infty a_U(u)^{-1}du<\infty$ and by
Schwarz's inequality, we have for every $f\in C_0^1(\R_+^\circ)$ that
\begin{align*}
\int_1^\infty f^2(u) du & = \int_1^\infty \Bigl(\int_u^\infty
f'(v)dv\Bigr)^2 du \le C \int_1^\infty du \int_u^\infty f'(v)^2 a_U(v) dv
\\
& \le C \int_1^\infty f'(v)^2 va_U(v) dv = C \int_1^\infty \frac{f'(u)^2}
{\rho_U(u)(1+\rho_U(u))} du.
\end{align*}
Next, we study the integral over $(0,1)$. By Schwarz's inequality,
\[
\int_0^1 f^2(u) du = \int_0^1 \Bigl(f(u)-f(1) +f(1) \Bigr) ^2 du \le 2
\int_0^1 \Bigl(f(u)-f(1) \Bigr) ^2 du +2 f(1)^2.
\]
We estimate two terms in the last expression separately. The first term is
estimated by Schwarz's inequality again as
\begin{align*}
f(1)^2 &= \Bigl(\int_1^\infty f'(u)du\Bigr)^2 \le  \Bigl( \int_1^\infty
\frac{f'(u)^2}{\rho_U(u)(1+\rho_U(u))} du \Bigr) \Bigl( \int_1^\infty
\rho_U(u)(1+\rho_U(u)) du \Bigr) \\
& \le C \int_1^\infty \frac{f'(u)^2}{\rho_U(u)(1+\rho_U(u))} du.
\end{align*}
To bound the remaining term, we need more detailed estimates. First, we
obtain the following bound
\begin{gather*}
\int_0^1 \bigl(f(u)-f(1) \bigr)^2 du = \int_0^1 \Bigl(\int_u^1
f'(v)dv\Bigr)^2 du \le  \int_0^1 \Bigl( \int_u^1 f'(v)^2 v^{\frac32} dv
\Bigr) \Bigl(\int_u^1 v^{-\frac32} dv \Bigr) du\\
\le 2 \int_0^1 u^{-\frac12} \Bigl( \int_u^1 f'(v)^2 v^{\frac32} dv
\Bigr) du = 2 \int_0^1 f'(v)^2 v^{\frac32} \Bigl( \int_0^v u^{-\frac12}
du \Bigr) dv = 4 \int_0^1 f'(v)^2 v^2 dv.
\end{gather*}
Inserting the relation
$\{\rho_U(u)(1+\rho_U(u))\}^{-1}=(e^{\a
u}-1)^2 e^{-\a u} \ge \a^2e^{-\a} u^2$, $ u \in [0,  1]$,
into the last term of the above inequality, we have that
\[
\int_0^1 \bigl(f(u)-f(1) \bigr) ^2 du \le 4\a^{-2} e^{\a} \int_0^1
\frac{f'(u)^2}{\rho_U(u)(1+\rho_U(u))} du.
\]
Combining inequalities obtained up to this point, we conclude that
\[
\int_0^\infty f^2(u) du \le \tilde{C} \int_0^\infty \frac{f'(u)^2}
{\rho_U(u)(1+\rho_U(u))} du =\tilde{C}(f,Q_Uf)
\]
where $\tilde{C}=3C + 8\a^{-2} e^{\a}$. The last equality follows by
integration by parts with $f\in C_0^1(\R_+^\circ)$ in mind. One can extend
the class of functions $f$.
\end{proof}
\begin{rem}
It is also possible to obtain the invariant measure for the U-case
from the one for the RU-case in Theorem \ref{thm:Rinvm} by
using the transformation used in Section \ref{sec:uniformproof} .
\end{rem}
\subsection{Restricted Uniform Case}
Let $Q_R$ be the differential operator
\[
Q_R= - \frac{\partial}{\partial u} \Bigl\{ \frac1{\rho_R(u)
(1-\rho_R(u))} \frac{\partial}{\partial u} \Bigr\}
\]
defined on $L^2(\R_+,du)$ with the Neumann boundary condition at $u=0$.

\begin{thm}\label{thm:Rinvm}
The Gaussian measure $N(0,Q_R^{-1})$ is the unique invariant measure of the
SPDE \eqref{eq:SPDE-2}, which appeared in Theorem \ref{thm:2.2}.
\end{thm}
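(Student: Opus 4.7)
The plan is to follow the same strategy as in the U-case (Theorem~\ref{thm:Uinvm}), adapted to the boundary condition at $u=0$. First I would replace $\rho_R(t,u)$ by its $t\to\infty$ limit $\rho_R(u) = 1/(e^{\b u}+1)$ and study the linearized SPDE
\[
\partial_t\Psi(t,u) = A_R\Psi(t,u) + \sqrt{2 g_R(u)}\,\dot W(t,u), \quad \Psi'(t,0)=0,
\]
where $A_R\Psi := \Psi'' + \b(1-2\rho_R)\Psi'$ and $g_R(u) := \rho_R(u)(1-\rho_R(u))$. The argument for Theorem~\ref{thm:Uinvm} reduces to this stationary-coefficient case, and one can verify separately that the residual drift coming from $\rho_R(t,u)-\rho_R(u)$ decays fast enough not to affect the invariant law.

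The key algebraic identity is $A_R = -g_R Q_R$, which follows from the computation
\[
-g_R Q_R\Psi = \Psi'' - \frac{g_R'}{g_R}\Psi' = \Psi'' + \b(1-2\rho_R)\Psi',
\]
using the explicit relation $\rho_R' = -\b\,\rho_R(1-\rho_R)$ which is immediate from $\rho_R(u) = 1/(e^{\b u}+1)$. Consequently $A_R$ is symmetric on the weighted space $L_R^2 := L^2(\R_+, du/g_R)$ with domain incorporating the Neumann condition at $u=0$; under the unitary equivalence $f \mapsto f/\sqrt{g_R}$, the operator $A_R$ on $L_R^2$ is (up to a bounded multiplicative operator) conjugate to $-Q_R$ on $L^2(\R_+)$ with Neumann boundary.

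Then I would write the mild form
\[
\Psi_t = e^{tA_R}\Psi_0 + \int_0^t e^{(t-s)A_R}\sqrt{2 g_R}\,dW_s
\]
and, as in the U-case, test against $\psi \in L_R^2$ to get
\[
\lan\Psi_t,\psi\ran_{L_R^2}
= \lan e^{tA_R}\Psi_0,\psi\ran_{L_R^2}
+ \int_0^t \Bigl\langle dW_s, \tfrac{1}{g_R}\sqrt{2g_R}\,e^{(t-s)A_R}\psi\Bigr\rangle_{L^2}.
\]
The mean term decays to $0$ once a spectral gap $A_R \le -c < 0$ on $L_R^2$ is available, and the variance converges to
\[
2\int_0^\infty \lan e^{2sA_R}\psi,\psi\ran_{L_R^2}\,ds
= \lan (-A_R)^{-1}\psi,\psi\ran_{L_R^2}.
\]
Rewriting this in the standard $L^2(\R_+)$ pairing via $\fa = \psi/g_R$ gives $\lan Q_R^{-1}\fa,\fa\ran_{L^2}$, identifying the limiting Gaussian law as $N(0, Q_R^{-1})$; uniqueness follows from the spectral gap.

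The main obstacle is the Poincar\'e inequality $(f,Q_R f) \ge c\|f\|^2$ for $f \in C^1(\R_+) \cap L^2(\R_+)$ with $f'(0)=0$, the RU analogue of Lemma~\ref{lem:5.3}. Here the weight $1/(\rho_R(1-\rho_R)) = (e^{\b u/2}+e^{-\b u/2})^2 \ge e^{\b u}$ makes the integral $\int_0^\infty \rho_R(1-\rho_R)\,du$ finite and controls the tail nicely, while near $u=0$ we have $\rho_R(0)(1-\rho_R(0))=1/4$ so there is no singularity to manage. I would split $\int_0^\infty f^2 du$ into $\int_0^1 + \int_1^\infty$: on $(1,\infty)$ apply the same Hardy-type Schwarz argument as in the U-case with $a_R(u) = (u\rho_R(u)(1-\rho_R(u)))^{-1}$, writing $f(u) = f(1) + \int_1^u f'$; on $(0,1)$ estimate $f(u)-f(1)$ by Cauchy--Schwarz using boundedness of $1/g_R$ there, and control $f(1)^2$ by the tail bound. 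Combining these with integration by parts (using $f'(0)=0$) yields the desired inequality and hence the spectral gap. Once this is in hand the rest of the proof runs in parallel to Theorem~\ref{thm:Uinvm}.
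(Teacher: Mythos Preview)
Your proposal is correct and follows the same approach as the paper: reduce to the stationary-coefficient SPDE, use the identity $A_R=-g_RQ_R$ to see that $A_R$ is symmetric on $L_R^2$, write the mild solution, and invoke a Poincar\'e inequality to get the spectral gap and identify the limiting Gaussian law as $N(0,Q_R^{-1})$.

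One simplification worth noting: for the Poincar\'e inequality the paper does \emph{not} split $(0,1)$ and $(1,\infty)$ as in the U-case. Since $g_R(u)=\rho_R(u)(1-\rho_R(u))$ has no singularity at $u=0$ (as you observe, $g_R(0)=1/4$), the weight $a_R(u)^{-1}=u\,g_R(u)$ is integrable on all of $\R_+$, and a single Schwarz/Hardy step
\[
\int_0^\infty f(u)^2\,du=\int_0^\infty\Bigl(\int_u^\infty f'(v)\,dv\Bigr)^2du\le C\int_0^\infty f'(v)^2\,v\,a_R(v)\,dv=C(f,Q_Rf)
\]
suffices. Your proposed two-interval argument also works but is more elaborate than necessary here.
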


\begin{proof}
Since $\rho(t,u)$ in the SPDE \eqref{eq:SPDE-2} converges as $t\to\infty$
to $\rho_R(u)$,
we may study the invariant measure of the SPDE:
\begin{equation}\label{eq:SPDE-4}
\left\{
\begin{aligned}
\partial_t\Psi(t,u) & = A_R\Psi(t,u) + \sqrt{2g_R(u)} \dot{W}(t,u), \\
\Psi'(t,0) & = 0,
\end{aligned}
\right.
\end{equation}
where
\[
A_R\Psi(u) :=\Psi''(u) + \b(1-2\rho_R(u))\Psi'(u) \quad \text{and} \quad
g_R(u)=\rho_R(u)(1-\rho_R(u)).
\]
Note that one can rewrite the operator $A_R$ as
\[
A_R\Psi(u) = - g_R(u) Q_R\Psi(u).
\]
In particular, $A_R$ is symmetric in the space $L_R^2 := L^2(\R_+,
1/g_R(u) du)$. Let $e^{tA_R}$ be the semigroup generated by $A_R$ on
$L_R^2$. Then, the solution of the SPDE \eqref{eq:SPDE-4} can be written in
the mild form:
\[
\Psi_t = e^{tA_R}\Psi_0 + \int_0^t e^{(t-s)A_R}\sqrt{2g_R} dW_s.
\]
In particular, for every $\psi\in L_R^2$, we have
\[
\lan\Psi_t,\psi\ran_{L_R^2}  = \lan e^{tA_R}\Psi_0,\psi\ran_{L_R^2} +
\int_0^t \lan dW_s, \tfrac{1}{g_R} \sqrt{2g_R} e^{(t-s)A_R}\psi\ran_{L^2}
=: m_t + I_t.
\]
However, since $A_R\le -c$ from Lemma \ref{lem:5.5} below, $m_t\to 0$ as
$t\to\infty$, while
\begin{align*}
E \bigl[I_t^2 \bigr] & = \int_0^t \|\sqrt{ \tfrac{2}{g_R}}e^{(t-
s)A_R}\psi \|_{L^2}^2 ds = 2 \int_0^t \|e^{sA_R}\psi \|_{L_R^2}^2 ds\\
& = 2 \int_0^t \lan e^{2sA_R}\psi,\psi\ran_{L_R^2} ds \to 2 \lan
(-2A_R)^{-1}\psi,\psi\ran_{L_R^2} = \lan (-A_R)^{-1}\psi,\psi\ran_{L_R^2}
\end{align*}
as $t\to\infty$. This proves that $\lan\Psi_t,\psi\ran_{L_R^2}$ converges
weakly to $N(0, \lan (-A_R)^{-1}\psi,\psi\ran_{L_R^2})$ for every
$\psi\in L_R^2$, which is an equivalent formulation of $\lan\Psi_t,
\fa\ran_{L^2}$  converging weakly to $N(0, \lan (-A_R)^{-1}(\fa{g_R}),
\fa\ran_{L^2})$ by taking $\fa= {\psi}/{g_R}$. However, $(-A_R)^{-1}
(\fa{g_R}) = Q_R^{-1}\fa$ and this implies the conclusion.
\end{proof}

\begin{rem}
Since $C_R(u,v)$ is the Green kernel of $Q_R^{-1}$,
this gives another proof of static result in RU-case.
\end{rem}

\begin{lem}(Poincar\'e inequality; RU-case) \label{lem:5.5}
There exists $c>0$ such that $(f,Q_Rf)\ge c\| f\|^2$ holds for every $f\in
C^1(\R_+) \cap L^2(\R_+,du)$ satisfying $f'(0)=0$, where the inner product
and the norm are those of the space $L^2(\R_+,du)$.
\end{lem}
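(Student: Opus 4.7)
The plan is to mimic the proof of Lemma \ref{lem:5.3} in the U-case, but now using an exponential weight, since $\rho_R$ decays like $e^{-\b u}$ rather than like $u^{-1}$. The first step is to make the weight explicit: from $\psi_R(u)=\b^{-1}\log(1+e^{-\b u})$ one computes $\rho_R(u)=(1+e^{\b u})^{-1}$ and hence
\[
\frac{1}{\rho_R(u)(1-\rho_R(u))} = \frac{(1+e^{\b u})^2}{e^{\b u}} = e^{-\b u}+2+e^{\b u}.
\]
Integration by parts (legal thanks to $f'(0)=0$ together with the decay of $f$ at infinity, see below) gives
\[
(f,Q_Rf) = \int_0^\infty f'(u)^2\bigl(e^{-\b u}+2+e^{\b u}\bigr)\,du,
\]
so in particular $(f,Q_Rf)$ controls both $\int_0^\infty f'^{\,2}\,du$ and $\int_0^\infty f'^{\,2}e^{\b u}\,du$.

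Next I would split $\|f\|_{L^2(\R_+)}^2=\int_0^1 f^2\,du+\int_1^\infty f^2\,du$ and estimate each piece by a weighted Hardy/Schwarz argument. For the tail, note that finiteness of $\int_0^\infty f'^{\,2}e^{\b u}\,du$ together with Schwarz forces $f(u)\to 0$ as $u\to\infty$, so $f(u)=-\int_u^\infty f'(v)\,dv$ and
\[
\int_1^\infty f(u)^2\,du \le \int_1^\infty e^{-\b u}\,du\int_u^\infty f'(v)^2 e^{\b v}\,dv\cdot\b^{-1}
\le \frac{e^{-\b}}{\b^2}\int_1^\infty f'(v)^2 e^{\b v}\,dv,
\]
after swapping the order of integration. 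For the bounded interval, write $f(u)=f(1)-\int_u^1 f'(v)\,dv$ and bound
\[
\int_0^1 f^2\,du \le 2f(1)^2 + 2\int_0^1\!\!\Bigl(\int_u^1 f'(v)\,dv\Bigr)^2 du \le 2f(1)^2 + 2\int_0^1 f'(v)^2\,dv,
\]
where the last step uses Schwarz and Fubini as in Lemma \ref{lem:5.3}. The anchor $f(1)^2$ is in turn controlled by the same tail estimate: $f(1)^2\le \b^{-1}e^{-\b}\int_1^\infty f'(v)^2 e^{\b v}\,dv$.

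Putting the three estimates together yields
\[
\|f\|_{L^2(\R_+)}^2 \le C_\b\int_0^\infty f'(u)^2\bigl(1+e^{\b u}\bigr)\,du \le C_\b\,(f,Q_Rf),
\]
with $c=C_\b^{-1}$, since the bracket $1+e^{\b u}$ is dominated by $e^{-\b u}+2+e^{\b u}$. I do not expect any genuine obstacle: the Neumann boundary condition $f'(0)=0$ is used only to justify the integration by parts, and the exponential growth of the weight $1/(\rho_R(1-\rho_R))$ is exactly what is needed to absorb the lack of decay of $f$ at infinity. A standard density argument (approximating $f\in C^1\cap L^2$ with $f'(0)=0$ and $(f,Q_Rf)<\infty$ by compactly supported functions satisfying the same boundary condition) extends the inequality to the full class stated in the lemma.
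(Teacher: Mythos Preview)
Your argument is correct, but it is more elaborate than the paper's. The paper does not split $\R_+$ into $(0,1]$ and $(1,\infty)$ at all: it applies a single weighted Schwarz inequality on the whole half-line, with weight $a_R(u)^{-1}:=u\,\rho_R(u)(1-\rho_R(u))$. The point is that, unlike in the U-case where $\rho_U(u)\sim(\a u)^{-1}$ forces a separate treatment near $0$, here $\rho_R(0)=1/2$ so $a_R(u)^{-1}\sim u/4$ near $0$ and $a_R(u)^{-1}\sim u e^{-\b u}$ near $\infty$, hence $C:=\int_0^\infty a_R(u)^{-1}\,du<\infty$ globally. Then $f(u)^2=\bigl(\int_u^\infty f'\bigr)^2\le C\int_u^\infty f'^{\,2}a_R$, integrate in $u$, swap the order, and use $v\,a_R(v)=1/(\rho_R(1-\rho_R))$ to land directly on $(f,Q_Rf)$.

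Your exponential-weight version works just as well and makes the decay mechanism very explicit; the paper's choice simply exploits the boundedness of $\rho_R$ at the origin to avoid the two-region bookkeeping you imported from Lemma~\ref{lem:5.3}. Both routes use $f'(0)=0$ only to kill the boundary term in the integration by parts.
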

\begin{proof}
Set
\[ a_R(u) = \{u \,\rho_R(u)(1-\rho_R(u))\}^{-1}, \quad u\in \R_+.\]
Note that $a_R(u)>0$ and $C=\int_0^\infty a_R(u)^{-1}du<\infty$ and by
Schwarz's inequality, we have for every $f\in C_0^1(\R_+)$ that
\begin{align*}
\int_0^\infty f^2(u) du & = \int_0^\infty \Bigl(\int_u^\infty
f'(v)dv\Bigr)^2 du   \le C \int_0^\infty du \int_u^\infty f'(v)^2 a_R(v)
dv\\
& = C \int_0^\infty f'(v)^2 va_R(v) dv = C(f,Q_Rf).
\end{align*}
The last equality follows by integration by parts with $f'(0)=0$ in mind.
One can extend the class of functions $f$.
\end{proof}


\begin{thebibliography}{99}

\bibitem{BG}{\sc L.\ Bertini and G.\ Giacomin},
{\it Stochastic Burgers and KPZ equations from particle systems},
Commun.\ Math.\ Phys., {\bf 183} (1997), 571--607.

\bibitem{CY}{\sc C.C.\ Chang and H.-T.\ Yau},
{\it Fluctuations of one dimensional Ginzburg-Landau models in
nonequilibrium}, Commun.\ Math.\ Phys., {\bf 145} (1992), 209--234.

\bibitem{DPS}{\sc A.\ De Masi, E.\ Presutti and E.\ Scacciatelli},
{\it The weakly asymmetric simple exclusion process},
Ann.\ Inst.\ H.\ Poincar\'e Probab.\ Statist., {\bf 25} (1989), 1--38.

\bibitem{DELO}{\sc B.\ Derrida, C.\ Enaud, C.\ Landim and S.\ Olla},
{\it Fluctuations in the weakly asymmetric exclusion process with open
boundary conditions},
J.\ Statist.\ Phys., {\bf 118} (2005), 795--811.

\bibitem{DG}{\sc P.\ Dittrich and J.\ G\"artner},
{\it A central limit theorem for the weakly asymmetric simple exclusion
 process},
Math.\ Nachr., {\bf 151} (1991), 75--93.

\bibitem{EK}{\sc S.N.\ Ethier and T.G.\ Kurtz},
{\it Markov Processes: Characterization and Convergence},
Wiley Series in Probability and Mathematical Statistics, 1986.

\bibitem{FVY}{\sc G.\ Freiman, \sc A.\ Vershik and Y.\ Yakubovich},
{\it A local limit theorem for random strict partitions},
 Theory Probab.\ Appl., {\bf 44} (2000), 453--468.

\bibitem{F}{\sc T.\ Funaki},
{\it Regularity properties for stochastic partial differential equations
of parabolic type}, Osaka J.\ Math., {\bf 28} (1991), 495--516.

\bibitem{FS}{\sc T.\ Funaki and M.\ Sasada},
{\it Hydrodynamic limit for an evolutional model of two-dimensional
Young diagrams}, Commun.\ Math.\ Phys., {\bf 299} (2010), 335--363.

\bibitem{G}{\sc J.\ G\"{a}rtner},
{\it Convergence towards Burger's equation and propagation of chaos for
weakly asymmetric exclusion processes},
Stochastic Process.\ Appl., {\bf 27} (1988), 233--260.

\bibitem{GN}{\sc I. Gy\"{o}ngy and D. Nualart},
{\it On the stochastic Burgers' equation in the real line,} Ann.
Probab. {\bf 27} (1999), no. 2, 782--802.

\bibitem{KS}{\sc I.\ Karatzas, S.E.\ Shreve},
{\it Brownian Motion and Stochastic Calculus},
2nd ed., Springer 1991.

\bibitem{KL}{\sc C.\ Kipnis and C.\ Landim},
{\it Scaling Limits of Interacting Particle Systems},
Springer, 1999.

\bibitem{K}{\sc T.G.\ Kurtz},
{\it Approximation of population processes},
CBMS-NSF Reg.\ Conf.\ Series in Appl.\ Math., {\bf 36} (1981).

\bibitem{LMO}{\sc C.\ Landim, A.\ Milanl\'es and S.\ Olla},
{\it Stationary and nonequilibrium fluctuations in boundary driven
exclusion processes}, Markov Process. Related Fields {\bf 14}
(2008), no. 2, 165--184.

\bibitem{P}{\sc B.\ Pittel},
{\it On a likely shape of the random Ferrers diagram},
Adv.\ Appl.\ Math., {\bf 18} (1997), 432--488.

\bibitem{RM}{\sc  R.D. Richtmyer, K.W. Morton},
{\it  Difference methods for initial-value problems}, 2nd ed. New
York John Wiley 1967.

\bibitem{V}{\sc A.\ Vershik},
{\it Statistical mechanics of combinatorial partitions and their limit shapes},
 Func.\ Anal.\ Appl., {\bf 30} (1996), 90--105.

\bibitem{WAL}{\sc J.B.\ Walsh},
{\it An introduction to stochastic partial differential equations},
Ecole d' Et\'e de Probabilit\'es de Saint-Flour, XIV -- 1984, pp.
265--439, Lect.\ Notes Math.,  1180, Springer, Berlin, 1986.

\bibitem{Y}{\sc Y.\ Yakubovich},
{\it Central Limit Theorem for Random Strict Partitions},
 J.\ Math.\ Sci., {\bf 107} (2001), 4296--4304.
\end{thebibliography}
\end{document}